\documentclass[11pt,a4paper]{article}
\usepackage{amsfonts,amsmath,amsfonts,graphicx,graphics,epsfig, caption, subcaption, verbatim}
\usepackage{enumerate}
\usepackage{color}
\usepackage{float}
\usepackage{multirow}
\newtheorem{theorem}{Theorem}

\newtheorem{lemma}{Lemma}

\newtheorem{proposition}{Proposition}

\newenvironment{proof}[1][Proof]{\noindent\textit{#1.} }{\ \rule{0.5em}{0.5em}}

\setlength{\oddsidemargin}{0pt}
\setlength{\marginparsep}{0pt}
\setlength{\marginparwidth}{0pt}

\linespread{1.2}

\setlength{\textwidth}{15cm}
\setlength{\topmargin}{0pt}
\setlength{\textheight}{20.5cm}

\def\R{\mathbb{R}}
\def\L{\mathbb{L}}
\def\C{\mathbb{C}}
\begin{document}

\author{Cristina Butucea{\textdagger}, Rodrigue Ngueyep Tzoumpe* and Pierre Vandekerkhove \textdagger\textdaggerdbl \\
%\vspace{0.0cm}\\
\it  \textdagger  Universit\'e Paris-Est \\
\it  LAMA (UMR 8050), UPEMLV \\
\it F-77454, Marne-la-Vall\'ee, France\\
%\vspace{0.0cm}\\
\it * H. Milton Stewart School of Industrial Systems and Engineering\\
\it Georgia Institute of Technology\\
%\vspace{0.0cm}\\
\it\textdaggerdbl \it UMI Georgia Tech - CNRS 2958,\\
\it School of aerospace\\
\it  Georgia Institute of Technology}
\title{Semiparametric topographical mixture models with symmetric errors}
%\runninghead{Mixture models with symmetric errors }
\maketitle
\begin{abstract}
Motivated by the  analysis  of a Positron Emission Tomography (PET)   imaging data considered in Bowen et al. (2012), we introduce a semiparametric topographical mixture model
able to capture the characteristics of  dichotomous shifted  response-type experiments. We propose a local estimation procedure, based on the symmetry of the local noise, for  the proportion and locations functions  involved in the proposed  model.  We establish under mild conditions the   minimax properties  and asymptotic normality of our  estimators when   Monte Carlo simulations are conducted to examine  their  finite sample performance.  Finally a statistical  analysis of the PET imaging data in Bowen et al. (2012) is illustrated for the proposed method.

\end{abstract}
\vspace{0.5cm}
\noindent{\bf AMS 2000 subject classifications.} Primary 62G05, 62G20; secondary 62E10. \\
\noindent{\bf Key words and phrases.}  Asymptotic normality, consistency, contrast
estimators, Fourier transform, identifiability, inverse problem, semiparametric, mixture model, symmetric errors, finite mixture of regressions.

\bigskip

\newpage

\section{Introduction}
The model we propose to investigate in this paper  is a semiparametric  topographical mixture model able to capture  the characteristics of  dichotomous  shifted  response-type experiments such as the tumor data in Bowen et al. (2012, Fig. 4).  Let suppose that we visit at random the space $\R^d$ ($d \geq 1$) by sampling a sequence of  i.i.d. random variables ${\bf X}_i$,  $i = 1,...,n$, having common probability distribution function (p.d.f.) $\ell : \R^d \to \R_+$. For each ${\bf X}_i$ we observe an output response $Y_i$ whose distribution is a mixture model with probability parameters depending  on the design ${\bf X}_i$. For simplicity, let us consider first a mixture of two nonlinear regression model:
\begin{eqnarray}\label{modelrv}
Y_i=W ({\bf X}_i) (a({\bf X}_i) + \tilde \varepsilon_{1,i})
+(1-W({\bf X}_i)) (b({\bf X}_i) + \tilde \varepsilon_{2,i}),
\end{eqnarray}
where locations are $a,\, b :\R^d \to \R$, the errors $\{\tilde \varepsilon_{1,i},\, \tilde \varepsilon_{2,i}\}_{i=1,...,n}$ are supposed to be i.i.d with zero-symmetric common p.d.f. $f$.
%for all ${\bf x} \in \R^d$.
The mixture in model (\ref{modelrv}) occurs according to the random variable $W({\bf x })$ at point ${\bf x}$, with probability $\pi : \R^d \to (0,1)$,
\begin{eqnarray*}
W({\bf x})
&=&\left\{\begin{array}{lll}1 & \mbox{with probability}&  \pi({\bf x}) , \\
0 & \mbox{with probability}&  1-\pi({\bf x}).
\end{array}\right.
\end{eqnarray*}
Moreover  we assume that, conditionally on the  ${\bf X}_i$'s, the $\{\tilde \varepsilon_{1,i},\, \tilde \varepsilon_{2,i} \}_i$'s and the $W({\bf X}_i)$'s are independent.
Such a model  is  linked to  the class of   Finite Mixtures of Regression (FMR), see Gr\"{u}n and Leisch (2006) for a good overview. Briefly,  statistical inference for the class of parametric FMR model was first considered by Quandt and Ramsey (1978) who proposed a  moment generating function based estimation method.
An EM estimating approach was proposed by De Veaux (1989) in the  two-component case. Variations of the latter approach were also considered in Jones and McLachlan
(1992) and Turner (2000). Hawkins et al. (2001) studied the estimation problem of
the number of components in the parametric FMR model using approaches derived from the
likelihood equation. In Hurn et al. (2003), the authors investigated  a Bayesian approach to
estimate the regression coefficients and also proposed an extension of the model in which
the number of components is unknown. Zhu and Zhang (2004) established the asymptotic
theory for maximum likelihood estimators in parametric FMR models. More recently,
St\"{a}dler et al. (2010) proposed an $\ell_1$-penalized method based on a Lasso-type estimator for
a high-dimensional FMR model with $d \geq n$.
As an alternative to parametric approaches to the estimation of a FMR model, some
authors suggested the use of more flexible semiparametric approaches. These approaches 
can actually be classified into  two groups: semiparametric FMR (SFMR)  of type I  and type II.
 The study of SFMR of type I comes from the seminal  work of Hall and Zhou (2003) in which $d$-variate semiparametric
mixture models of random vectors with independent components were considered. These authors
proved in particular that, for $d\geq 3$, we can  identify a two-component mixture model
without parametrizing the distributions of the component random vectors (Type I definition). To the best of
our knowledge, Leung and Qin (2006) were the first in estimating  a FMR model semiparametrically in that sense.
In the two-component case, they studied the case where the components
are related by Anderson (1979)'s exponential tilt model. Hunter and Young (2012) studied
the identifiability of an $m$-component  type I SFMR model and numerically investigated
a Expectation-Maximization (EM)  type algorithm for estimating its parameters. Vandekerkhove (2013) proposed
an M-estimation method for a two-component   semiparametric mixture of  linear regressions with
symmetric errors (type I) in which one component is known.  Bordes et al. (2013) revisited  the same model by establishing new moment-based
identifiability results from which they  derived  explicit $\sqrt n$-convergent  estimators. 
The study of type II SFMR models started with  Huang and Yao (2012)   who considered    a semiparametric  linear FMR model  with Gaussian noise  in which the mixing proportions are possibly covariates-dependent (Type II definition: parametric noises with mixing proportion and/or  noises' parameters functionally depending on covariates). They  established  also  the asymptotic normality  of their  local maximum likelihood estimator and investigated  a modified EM-type  algorithm. Huang et al. (2013) generalized the latter  work to nonlinear FMR with  possibly covariates-dependent  noises.  Toshiya (2013) considered  a  Gaussian  FMR model where the joint distribution of the response and the covariate (possibly functional)  is itself  modeled as a mixture. 
More recently Montuelle et al. (2013)  considered a penalized maximum likelihood approach for Gaussian FMR models  with logistic weights.

To improve the flexibility  of our  FMR model  (\ref{modelrv}) and address the study of models involving  design-dependent noises,  see radiotherapy application described  in Section  5,   we will  consider  a  slightly more general model:  
\begin{eqnarray}\label{modelrv1}
Y_i=W ({\bf X}_i) (a({\bf X}_i) + \varepsilon_{1,i}({\bf X}_i))
+(1-W({\bf X}_i)) (b({\bf X}_i) + \varepsilon_{2,i}({\bf X}_i)),
\end{eqnarray}
such that, given  $\left\{{\bf X} = {\bf x}\right\}$, the common  p.d.f. of the  $\varepsilon_{j,i}({\bf x})$, $j=1,2,$ denoted
 $f_{{\bf x}}$, is zero-symmetric. Note that the above model combines  type I  and type II properties since no parametric assumption is made about the noise and the mixing proportion,  along with the location parameters, are possibly design dependent.  Our model is still said {\it semiparametric} because, given $\left\{{\bf X} = {\bf x}\right\}$,  the vector $\theta({\bf x})=(\pi({\bf x},)a({\bf x}),b({\bf x}))$ will be  viewed as an Euclidean parameter to be estimated.\\

\noindent {\it Examples of design-point noise dependency.}  
\begin{enumerate}[i)]
\item (Topographical scaling) The most natural transformation is probably when   considering  a topographical scaling of the errors,  
with $\sigma : \R^d \to \R^*_+$, such that $\varepsilon_{j,i}({\bf X}_i) = \sigma({\bf X}_i) \tilde \varepsilon_{j,i} $, $j=1,2$, where the $\tilde \varepsilon_{j,i}$'s are similar to those involved  in (\ref{modelrv}). The conditional p.d.f given $\left\{{\bf X} = {\bf x}\right\}$ is defined  by
\begin{eqnarray}\label{topovar}
f_{{\bf x}} (y) = \frac 1{\sigma(\bf x)} f\left( \frac{y}{\sigma(\bf x)}\right), \quad y \in \R.
\end{eqnarray}
Indeed,  if $f$ is zero-symmetric then  the errors' distribution inherits trivially  the same symmetry property.
\item (Zero-symmetric varying  mixture) Another useful  example could be the varying mixing proportion mixture model of $r$ zero-symmetric distributions. For $k=1,\dots,r$,  we consider proportion functions $\lambda_k : \R^d \to (0,1)$ with $\sum_{k=1}^r \lambda_k ({\bf x}) = 1$ for all ${\bf x}\in \R^d$. The conditional p.d.f given $\left\{{\bf X} = {\bf x}\right\}$ is defined  by
\begin{eqnarray*}\label{topovar1}
f_{{\bf x}} (y) = \sum_{k=1}^r \lambda_k({\bf x}) f_k(y), \quad y \in \R,
\end{eqnarray*}
where the $f_k$ functions  are zero-symmetric p.d.f.'s. 
\item (Antithetic location model)  Consider a location function $\mu : \R^d \to \R$ and $f$ any arbitrary  p.d.f. The conditional p.d.f given $\left\{{\bf X} = {\bf x}\right\}$ is defined  by
\begin{eqnarray*}\label{topovar2}
f_{{\bf x}} (y) =\frac{1}{2} f(y-\mu({\bf x}))+\frac{1}{2} f(-y+\mu({\bf x})), \quad y \in \R,
\end{eqnarray*}
and also  results into a zero-symmetric p.d.f.
\end{enumerate}
Note that any  combination of the above situations could be  considered  in model (\ref{modelrv1}) free from specifying any parametric family (provided the resulting zero-symmetry hold). This last remark reveals, according to us, the main strength of our model in the sense  that it  could prove to be  a  very  flexible  exploratory tool  for  the analysis of   shifted  response-type experiments. Our paper is organized as follows.
Section 2 is devoted to a detailed description of our estimation method, while Section 3 is concerned with  its asymptotic properties.  The finite-sample performance of the proposed estimation method is studied for various scenarios through Monte Carlo experiments in Section 4.  In Section 5 we propose to analyze the  Positron Emission Tomography (PET) imaging data considered in Bowen et al. (2012). Finally Section 6 is devoted to auxiliary results and  main proofs.

\section{Estimation method}

\noindent Let us define the joint density of  couples $(Y_i,{\bf X}_i)$, $i=1,\dots,n$,  designed from model (\ref{modelrv1}):
\begin{eqnarray}\label{jointdensity}
g(y,{\bf x})=[\pi({\bf x })f_{\bf x}(y-a({\bf x }))
+(1-\pi({\bf x}))f_{\bf x}(y-b({\bf x}))]\ell({\bf x}),
\quad (y,{\bf x
})\in \mathbb R^{d+1},
\end{eqnarray}
while the conditional density of $Y$ given $\left\{{\bf X} = {\bf x}\right\}$ (denoted for simplicity $Y/{\bf X}={\bf x}$) is
\begin{eqnarray}\label{conddensity}
g_{{\bf x}}(y) = g(y,{\bf x}) / \ell({\bf x}) = \pi({\bf x })f_{\bf x}(y-a({\bf x }))
+(1-\pi({\bf x}))f_{\bf x}(y-b({\bf x})).
\end{eqnarray}

\noindent We are interested in estimating the parameter $\theta_0
 =  \theta({\bf x}_0
)=(\pi({\bf x}_0
), a({\bf x}_0
), b({\bf x}_0
))$ at some fixed point ${\bf x}_0
$ belonging to the interior of the support of $\ell$ ($\ell({\bf x}_0)>0$), denoted  $supp(\ell)$.  For simplicity and identifiability matters, we will suppose that  $\theta_0
$ belongs to the interior of the parametric space  $\Theta=[p,P]\times \Delta$, where $0< p \leq P < 1/2$ and $\Delta$ denotes a compact set of $\R^2 \backslash \{(x,x): x\in \R\}$.\\

%\noindent {\it  Remark.} Here we consider that all the functions are supported on $\R$,
%and  also assume that $\ell, \, a, \, b, \, \pi$ are defined and bounded away from 0 on some open
%set $D \subseteq \R^d $.\\

\subsection{Mixture of regression functions as an inverse problem}

\noindent We see in formula (\ref{conddensity}), that the conditional density of $Y$ given 
$\left\{{\bf X} = {\bf x}\right\}$ can be viewed  as a mixture of the errors distribution $f_{\bf x}$ given $\left\{{\bf X} = {\bf x}\right\}$ with locations $(a({\bf x}), b({\bf x}))$ and mixing proportion $\pi({\bf x})$. Mixture of populations with different locations is a well known  inverse problem. Our inversion procedure is here  based on the Fourier transform of the conditional density  $g_{{\bf x}}(y) $ 
of $Y/{\bf X} = {\bf x}$. 
If the p.d.f. $g_{{\bf x}}$ belongs to $\mathbb{L}_1 \cap \mathbb{L}_2$,  define 
$g^*_{{\bf x}}(u) = \int \exp[i uy] g_{{\bf x}}(y) dy $ for all  $u\in \R$, and observe that
\begin{eqnarray}\label{TFg}
g^*_{{\bf x}}(u)
&=&\left (\pi({\bf x}) e^{iu a({\bf x})}+(1-\pi({\bf x})) e^{iu b({\bf x})}\right )f^*_{{\bf x}}(u), \quad u\in \mathbb R \nonumber .
\end{eqnarray}
Let us denote, for all $(t,u)=(\pi, a, b,u)\in \Theta\times \R$,  
\begin{eqnarray}\label{M}
M(t,u):=\pi e^{iu a}+(1-\pi) e^{iu b}.
\end{eqnarray}
 Note that $|M(t,u)| \in [1-2P,1]$ for all $(t,u)\in \Theta\times \R$.
Then, we have
\begin{eqnarray*}\label{local_Fourier}
g^*_{{\bf x}}(u) & = & M(\theta({\bf x}),u)f^*_{{\bf x}_0}
(u).
\end{eqnarray*}
%The mixture operator depends on $\theta(\bf x)$ and it is invertible so this mixture problem is solved as an inverse problem via Fourier transform.
 Let us fix ${\bf x}_0
\in supp(\ell)$  such that $\theta({\bf x}_0
) $ belongs  to the interior of  $\Theta$, denoted   $\stackrel{\circ}{\Theta}$. Noticing  that the p.d.f. $f_{{\bf x}_0
}$ is zero-symmetric we  therefore have that  $f_{\bf x_0}^*(u)\in\R$, for all $u\in \R$. 
 If $t$ belongs to $\Theta$, we prove in the next theorem the {\it picking} property
\begin{eqnarray*}
\Im \left( \frac{g^*_{{\bf x}_0}(u)}{M(t,u)}\right) =0 \text{ for all } u \in \R, \text{ if and only if } t=\theta({\bf x}_0),
\end{eqnarray*}
where $\Im: \C\rightarrow \R$ denotes the imaginary part of a complex number.
This result allows us to build  a {\it contrast}  function  for the parameter $t\in \Theta$:
\begin{equation}\label{contrast}
S(t) := \int \Im^2 \left( \frac{g^*_{{\bf x}_0
}(u)}{M(t,u)}\right) \ell^2({\bf x}_0
) w(u)du.
\end{equation}
%Note that this integral is multiplied by the p.d.f. of the design, as we need to construct a simple, $\sqrt{n}$-consistent empirical estimator of this.
The function $w:\R^d \to \R_+$ is a bounded p.d.f. which helps in computing the integral via Monte-Carlo method and solves integrability issues.\\

\noindent {\it Remark.} The idea of using  Fourier transform in order to solve the inverse mixture  problem was introduced in Butucea and Vandekerkhove (2013) for density models. In the regression models we deal with the conditional density of $Y/{\bf X}={\bf x}_0
$. This has no incidence on the identifiability of the model  but changes dramatically the behavior of the estimators as we shall see later on.
\\

\noindent We prove in the following theorem   that our model is identifiable and that $S(t)$ defines a contrast on the parametric space $\Theta$.

\begin{theorem}{(Identifiability and contrast property)}
Consider  model (\ref{modelrv1}) provided with   $f_{{\bf x}}(\cdot) \in \L_1$ for all ${\bf x}\in \R^d$. For a fixed point ${\bf x}_0
$ in the interior of the support of $\ell$,  we assume that $f_{{\bf x}_0
}(\cdot)$ is zero-symmetric and that $\theta_0
=\theta({\bf x}_0
)$ is an  interior point  of $\Theta$. Then we have the following properties:
\begin{enumerate}[i)]
\item The collection of scalar parameters $\theta_{0} =(\pi({\bf x}_0
), a({\bf x}_0
), b({\bf x}_0
))$ and the function $f_{{\bf x}_0
}(\cdot)$ are identifiable.

\item  The function $S$ in (\ref{contrast}) is a contrast function, i.e. for all $t \in \Theta$, $S(t)\geq 0$ and $S(t)=0$ if and only if $t=\theta_0
$.
\end{enumerate}
\end{theorem}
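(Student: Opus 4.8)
The plan is to reduce both assertions to a single algebraic \emph{picking} lemma about the function $M$ defined in (\ref{M}), exploiting that $f_{{\bf x}_0}$ is zero-symmetric. Write $\theta_0=(\pi_0,a_0,b_0)$ and $t=(\pi,a,b)$. As preliminaries I record that: zero-symmetry gives $f_{{\bf x}_0}^*(u)\in\R$ for every $u\in\R$; $f_{{\bf x}_0}^*$ is continuous with $f_{{\bf x}_0}^*(0)=1$, hence $f_{{\bf x}_0}^*$ does not vanish on some interval $(-\eta,\eta)$; and $|M(t,u)|\ge 1-2P>0$, so that the quotients below are well defined and, viewed as functions of the real variable $u$ (ratios of trigonometric polynomials with nonvanishing denominator), real-analytic on $\R$.

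The core step is the picking lemma: for $t\in\Theta$,
\[
\Im\!\left(M(\theta_0,u)\,\overline{M(t,u)}\right)=0\ \text{ for all }u\in\R\ \Longleftrightarrow\ t=\theta_0 .
\]
The direction ``$\Leftarrow$'' is trivial since $M(\theta_0,u)\overline{M(\theta_0,u)}=|M(\theta_0,u)|^2\in\R$. For ``$\Rightarrow$'', expanding the product and taking imaginary parts yields
\[
\Im\!\left(M(\theta_0,u)\overline{M(t,u)}\right)=c_1\sin(u\delta_1)+c_2\sin(u\delta_2)+c_3\sin(u\delta_3)+c_4\sin(u\delta_4),
\]
with $\delta_1=a_0-a$, $\delta_2=a_0-b$, $\delta_3=b_0-a$, $\delta_4=b_0-b$ and $c_1=\pi_0\pi$, $c_2=\pi_0(1-\pi)$, $c_3=(1-\pi_0)\pi$, $c_4=(1-\pi_0)(1-\pi)$, all four coefficients being strictly positive. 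Rewriting each sine through exponentials and using that the characters $u\mapsto e^{iu\omega}$, $\omega\in\R$, are linearly independent, this trigonometric polynomial vanishes identically if and only if the weighted configuration $\{(\delta_k,c_k)\}$ is symmetric under $\delta\mapsto-\delta$ with matching weights, i.e. $\sum_{k:\delta_k=\omega}c_k=\sum_{k:\delta_k=-\omega}c_k$ for every $\omega$. Now $a_0\neq b_0$ and $a\neq b$ force $\delta_1\neq\delta_2$, $\delta_3\neq\delta_4$, $\delta_1\neq\delta_3$, $\delta_2\neq\delta_4$, so the only possible coincidences among the $\delta_k$ are $\delta_1=\delta_4$ or $\delta_2=\delta_3$, and these cannot hold simultaneously. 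A short case analysis of the three resulting configurations — four distinct points, three distinct points with $\delta_1=\delta_4$, three distinct points with $\delta_2=\delta_3$ — shows, using the identity $\delta_1+\delta_4=\delta_2+\delta_3$, that the symmetry condition forces respectively $\pi=1/2$; or $\pi=\pi_0$, $a=a_0$, $b=b_0$; or $a=b_0$, $b=a_0$ together with $\pi_0\pi=(1-\pi_0)(1-\pi)$. Since $\pi_0,\pi\le P<1/2$ we have $\pi_0\pi<1/4<(1-\pi_0)(1-\pi)$, which rules out the first and third possibilities, leaving only $t=\theta_0$. I expect this finite case analysis — in particular excluding the label-switching solution via $P<1/2$ — to be the main obstacle; everything else is bookkeeping.

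Assertion (i) follows quickly. Suppose $g_{{\bf x}_0}$ is generated both by $(\theta,f)$ and by $(\theta',\tilde f)$ with $\theta,\theta'\in\Theta$ and $f,\tilde f\in\L_1$ zero-symmetric. Taking Fourier transforms gives $M(\theta,u)f^*(u)=M(\theta',u)\tilde f^*(u)$ for all $u$. On the interval where $f^*$ does not vanish, dividing by $M(\theta',u)f^*(u)$ (both nonzero there) and using that $f^*$ and $\tilde f^*$ are real-valued shows $\Im(M(\theta,u)\overline{M(\theta',u)})=0$ there; by real-analyticity in $u$ this extends to all of $\R$, so the picking lemma gives $\theta=\theta'$. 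Then $M(\theta,u)\neq0$ yields $f^*\equiv\tilde f^*$, and injectivity of the Fourier transform on $\L_1$ gives $f=\tilde f$.

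For assertion (ii), $S(t)\ge0$ is immediate, and $S(\theta_0)=0$ because at $t=\theta_0$ one has $g_{{\bf x}_0}^*(u)/M(\theta_0,u)=f_{{\bf x}_0}^*(u)\in\R$. Conversely, since $g_{{\bf x}_0}^*(u)=M(\theta_0,u)f_{{\bf x}_0}^*(u)$ with $f_{{\bf x}_0}^*$ real,
\[
\Im\!\left(\frac{g_{{\bf x}_0}^*(u)}{M(t,u)}\right)=f_{{\bf x}_0}^*(u)\,h_t(u),\qquad h_t(u):=\Im\!\left(\frac{M(\theta_0,u)}{M(t,u)}\right),
\]
and $h_t$ is real-analytic on $\R$. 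If $S(t)=0$ then this product vanishes for $w$-almost every $u$; since $w$ is positive near the origin, where $f_{{\bf x}_0}^*$ is also nonzero, $h_t$ vanishes on a set with $0$ as an accumulation point, hence $h_t\equiv0$ on $\R$, i.e. $\Im(M(\theta_0,u)\overline{M(t,u)})\equiv0$, and the picking lemma gives $t=\theta_0$. This establishes both claims.
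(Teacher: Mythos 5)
Your proposal is correct and follows exactly the route the paper intends: the paper's own ``proof'' of this theorem is only a pointer to Theorem~1 and Proposition~1 of Butucea and Vandekerkhove (2013) (with $f^*,g^*$ replaced by their conditional versions), and what you have written out is precisely that Fourier-transform/picking-property argument, made self-contained. Two small points to tighten: in the four-distinct-frequency case the antipodal pairing is not unique, so the symmetry condition can also force $\pi_0=1/2$ or the weight identity $\pi_0\pi=(1-\pi_0)(1-\pi)$ rather than only $\pi=1/2$ --- all three sub-cases are excluded by $\pi,\pi_0\le P<1/2$ exactly as you argue, but they should be listed; and in part (ii) you implicitly assume $w>0$ in a neighbourhood of the origin (the paper's empirical contrast does take $w:\R\to\R_+^*$, so this is consistent, but it should be stated as a hypothesis on $w$).
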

\begin{proof} The proofs of i) and ii) are respectively similar to the proof of  Theorem 1 and Proposition 1 in Butucea and Vandekerkhove (2013), replacing  $f^*(\cdot)$ and $g^*(\cdot)$
by $f_{{\bf x}_0
}^*(\cdot)$ and $g_{{\bf x}_0
}^*(\cdot)$, and noticing that $\ell({\bf x}_0
)$ is bounded away from zero. Follows also Theorem~2.1 in Bordes et al. (2006).
\end{proof}

\bigskip

\noindent {\it Remark.} For mixture models with higher number of components, i.e.
\begin{eqnarray*}
Y_i= \sum_{j=1}^J W_j ({\bf X}_i) (\gamma_j({\bf X}_i) + \varepsilon_{j,i}({\bf X}_i)),\quad i=1,\dots,n,
\end{eqnarray*}
where  $(W_1({\bf x}),..., W_J({\bf x}))$ are distributed according to a $J$-components ($J>2$)  multinomial distribution with parameters
$(\pi_1({\bf x}),...,\pi_J({\bf x})),$ and noises $(\varepsilon_{j,i})$, $j=1,\dots,J$,   i.i.d. according to $f_{\bf x}$, 
we assume that there exists a compact set $\Psi\subset ]0,1[^{J-1}\times \R^J$ of parameters
$(\pi_1({\bf x}),...,\pi_{J-1}({\bf x}),\gamma_1({\bf x}),...,\gamma_J({\bf x}))$ where the model is {\it identifiable}, see Hunter et al. (2007, Section 2). Note that the  3-components mixture model has been  studied closely  in  Bordes et al. (2006) and  Hunter et al. (2007) where sufficient identifiability  conditions were given. The case where $d>3$ is more involved for full description  and it is still an open question. In this setup, the  estimation procedure described  hereafter can be adapted  over the parameter  space  $\Psi$ with analogous results.

\subsection{Estimation procedure}

In order to build  an estimator of the contrast $S(t)$ defined in (\ref{contrast}), a local smoothing has to be performed in order to extract the information that the random design $X_1,...,X_n$ brings to the knowledge of  the conditional law of $Y/{\bf X}={\bf x}_0
$. We use a  kernel smoothing approach, but local polynomials or wavelet methods could also be employed. This smoothing is a major difference with respect to the density model considered in Butucea and Vandekerkhove (2013) and all the rates will depend on the smoothing parameter applied to the kernel function.

We choose a kernel function $K:\R^d\to \R$ belonging to $\mathbb{L}_1$ and to $\mathbb{L}_4$ and some bandwidth parameter  $h>0$ to be described later on. For ${\bf x}_0
\in supp(\ell)$ fixed, we denote
\begin{equation}\label{eq:Zk}
Z_k(t,u,h) := \left( \frac{e^{iuY_k}}{M(t,u)} - \frac{e^{-iuY_k}}{M(t,-u)} \right) K_h({\bf X}_k-{\bf x}_0
), \text{ where } K_h({\bf x}) := \frac 1{h^d} K\left( \frac {{\bf x}}{h}\right).
\end{equation}
The empirical contrast of $S(t)$ is defined by
\begin{equation}\label{estim_contrast}
S_{n}(t) = -\frac 1{4n (n-1)} \sum_{j\ne k, j,k=1}^n
\int Z_k(t,u,h) Z_j(t,u,h) w(u) du,
\end{equation}
where $w:\mathbb{R}\to \mathbb{R}^*_+$ is a bounded p.d.f., having a finite  moment of order 4, i.e. $\int u^4 w(u)du < \infty$.
From  this empirical contrast we then define the estimator
\begin{equation}\label{estim_theta}
\hat \theta_n = \arg\inf_{t \in \Theta} S_n(t),
\end{equation}
of $\theta_0
 = \theta({\bf x}_0
)$. We shall study successively the properties of $S_n(t)$ as an estimator of $S(t)$ and deduce consistency and asymptotic normality of $\hat \theta_n$ as an estimator of $\theta_0
$.\\

\noindent{\it Estimation methodology for  $f_{\bf x_0}$}. For the estimation of the local noise density $f_{{\bf x}_0}$ we suggest to consider the natural smoothed version of  the plug-in density  estimate given in Butucea and Vandekerkhove (2013, Section 2.2).

Let us denote by $\varphi({\bf x},y)=\ell({\bf x})f_{\bf x}(y)$. We plug $\hat \theta_n$  in the natural  smoothed nonparametric kernel estimator of $\varphi({\bf x},y)$  deduced from (\ref{TFg}), whenever the unknown parameter $\theta_0$ is required.
For ${\bf x}_0$ fixed, we consider the Fourier transform of the resulting estimator of  $\varphi({\bf x}_0,y)$. This   procedure gives, in Fourier domain,
\begin{equation*}\label{kernelfour}
 \varphi_{{\bf x}_0,n}^* (u) = \frac 1n \sum_{k=1}^n \frac{Q^*(h_{1,n} u)e^{iuY_k}}{M(\hat \theta_{n},u)}K_{h_{2,n}}({\bf X_k}-{\bf x}_0),
\end{equation*}
where $Q$ is a univariate  kernel ($\int Q = 1$ and $Q \in \mathbb{L}_2$) and $(h_{1,n},h_{2,n})$ are  bandwidth parameters   properly chosen.
Note that $G_n^*(u):=Q^*(h_{1,n} u)/M(\hat \theta_{n},u)$ is in $\L_1$ and $\L_2$ and has an inverse
Fourier transform which we denote by $G_n(u/h_{1,n})/h_{1,n}$. Therefore, the estimator of $\varphi({\bf x_0},y)$ is
\begin{equation*}\label{kernelest}
\varphi_n({\bf x}_0,y) = \frac 1{nh_{1,n}} \sum_{k=1}^n G_n\left( \frac{x-X_k}{h_{1,n}}\right)K_{h_{2,n}}({\bf X_k}-{\bf x}_0).
\end{equation*}
Finally the estimator of $f_{{\bf x}_0}$ is obtained by considering
\begin{eqnarray}\label{local_dens}
\hat f_{{\bf x}_0} (y)= \frac{f_n(y|{\bf x}_0){\mathbb I}_{f_n(y|{\bf x}_0)\geq 0}}{\int_\R f_n(y|{\bf x}_0){\mathbb I}_{f_n(y|{\bf x}_0)\geq 0}dy } ,\quad \mbox{where}~~ f_n(y|{\bf x}_0)=\frac{\varphi_n({\bf x}_0,y)}{\ell_n({\bf x}_0)}.
\end{eqnarray}
%with
%\begin{eqnarray*}
%q_n(y,{\bf x}_0
%)&=&\frac 1{n} \sum_{k=1}^n \int_\R Q_k(\hat \theta_n,h_{1,n},h_{2,n};u)\\&& \times [\hat \pi_n \cos(u(Y_k-y-\hat a({\bf x}_0))+(1-\hat \pi_n) \cos (u(Y_k-y-\hat b({\bf x}_0)))]du,
%\end{eqnarray*}
%and 
%$$
%Q_k(\theta,h_1,h_2;u):=\frac{1}{2\pi}\times \frac{e^{-h_1^2 u^2/2}}{ 2\pi^2-2\pi+1+2\pi(1-\pi)\cos(u(a-b))}K_{h_{2}}({\bf X}_k-{\bf x}_0
%),
%$$
%when
%$$
%\ell_n({\bf x}_0
%)=\frac{1}{n}\sum_{k=1}^nK_{h_{2,n}}({\bf X}_k-{\bf x}_0
%).
%$$
where $\ell_n({\bf x}_0)=\frac 1{n} \sum_{k=1}^n K_{h_{2,n}}({\bf X_k}-{\bf x}_0)$.
The asymptotic properties of this local density   estimator are not established yet but we strongly guess   that the bandwidth conditions  required to prove 
its convergence and classical convergence rate are similar to those found in the conditional density estimation literature,  see  Brunel et al. (2010) or  Cohen and Le Pennec (2012).
\section{Performance of the method}\label{Perf}
We give upper bounds for the mean squared error of $S_n(t)$. We are interested in consistency and asymptotic normality of $\hat \theta_n$ and this requires some small amount of smoothness $\alpha \in (0,1]$ for the  p.d.f.  of the errors and for the functions $\pi,\, a$ and $b$.  From now on, $\|v\|$ denotes the Euclidean norm of vector $v$. Recall that a function $F$ is Lipschitz $\alpha$-smooth if it belongs to the following class
$$
L(\alpha, M) = \left\{ F: \mathbb{R}^d \to \mathbb{R}, |F({\bf x}) - F({\bf y})| \leq M \|{\bf x}-{\bf y}\|^\alpha, ~({\bf x}, \, {\bf y})\in \R^d\times \R^d\right\},
$$
for $\alpha \in (0,1] $ and $M>0$. 

\vspace{0.2cm}

\noindent {\bf A1}. We assume that the functions $\pi, \, a,\, b, \, \ell$ are Lipschitz $\alpha$-smooth  with constant $M>0$. 
\vspace{0.2cm}

\noindent {\it Remark.} We may actually suppose that the functions appearing in our model have different smoothness parameters, but the rate will be governed by the smallest smoothness parameter.

An important consequence of this assumption is that the density $\ell$ is uniformly bounded by some constant depending only on $\alpha$ and $M$, i.e. $\sup_{\ell \in L(\alpha, M)} \|\ell\|_\infty < \infty$.

\vspace{0.2cm}

\noindent {\bf A2.} Assume that  $f_{{\bf x}}(\cdot) \in \L_1 \cap \L_2$ for all ${\bf x} \in \R^d$.
In addition, we require that  there exists a $w$-integrable function $\varphi$  such that 
$$
|f^*_{\bf x}(u)-f^*_{{\bf x}'}(u)|\leq \varphi(u) \|{\bf x}-{{\bf x}'}\|^\alpha,\quad ({\bf x},{\bf x}')\in \R^d\times \R^d, ~u\in \R.
$$

\noindent {\it Remark.} Note that for the scaling model (\ref{topovar}), if $f$ is the ${\mathcal N}(0,1)$ p.d.f. and $\sigma(\cdot)$ is bounded  and Lipschitz $\alpha$-smooth, we have:
$$
|f^*_{\bf x}(u)-f^*_{{\bf x}'}(u)|\leq  \frac{u^2}{2} |\sigma^2({\bf x})-\sigma^2({{\bf x}'})|\leq  \frac{u^2}{2} \|{\bf x}-{{\bf x}'}\|^\alpha.
$$

\noindent {\bf A3.} We assume that the kernel $K$ is such that $\int |K| < \infty$,  $\int K^4 < \infty$ and that it satisfies also the moment condition
$$
\int \|{\bf x}\|^\alpha |K({\bf x})| d{\bf x} < \infty.
$$

\noindent {\bf A4.} The weight function $w$ is a p.d.f. such that
$$
\int (u^4+\varphi(u)) w(u) du<\infty.
$$

\noindent {\it Remark.} We may suppose that the smoothness $\alpha >1$. In that case, the class $L(\alpha,M)$ consists of all functions $F$ with bounded derivatives up to order $k$, where $\alpha=k+\beta$, $k\in \mathbb{N}$ and $\beta \in (0,1]$. Moreover, for all  multi-index $j=(j_1,...,j_d) \in \mathbb{N}^d$ such that $|j|=k$ where $|j|=j_1+...+j_d$,  we have
$$
|F^{(j)}({\bf x}) - F^{(j)}({\bf y})| \leq M \|{\bf x}-{\bf y}\|^\beta, \quad ({\bf x},{\bf y})\in \R^d\times \R^d.
$$
The following results will hold true under the additional assumption on the kernel (see {\bf A3}): 
$\int {\bf x}^j K({\bf x}) d{\bf x} = 0$, for all $j$ such that $|j|\leq k$.

\begin{proposition}{}\label{mse_contrast}
For each $t \in \Theta$ and  ${\bf x}_0
\in supp(\ell)$ fixed, suppose $\theta_0
  \in \stackrel{\circ}{\Theta}$ and that assumptions  {\bf A1-A4}  hold.  Then, the empirical contrast function $S_n(\cdot)$ defined in (\ref{estim_contrast}) satisfies 
$$
 E\left[ \left( S_{n}(t) - S(t)\right)^2\right]
\leq C_1 h^{2\alpha } + C_2 \frac 1{nh^d} ,  
$$
if $ h\to 0$ and $nh^d \to \infty$  as $n \to \infty$,  where constants $C_1, \, C_2$ depend on $\Theta$, $K$, $w$, $\alpha$ and $M$ but are free from  $n,\, h, \, t$ and ${\bf x}_0
$.  
\end{proposition}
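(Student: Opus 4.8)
The plan is to rewrite $S_n(t)$ as a $U$-statistic of order two and to split the mean squared error into squared bias plus variance. First I would simplify the summand in (\ref{estim_contrast}): since $\overline{M(t,u)}=M(t,-u)$ and $\overline{e^{iuY_k}}=e^{-iuY_k}$, the bracket in (\ref{eq:Zk}) equals $z-\bar z=2i\,\Im(z)$ with $z=e^{iuY_k}/M(t,u)$, so that $Z_k(t,u,h)=2i\,\psi_k(t,u)$ with $\psi_k(t,u):=\Im\!\big(e^{iuY_k}/M(t,u)\big)K_h({\bf X}_k-{\bf x}_0)$ real and $|\psi_k(t,u)|\le (1-2P)^{-1}|K_h({\bf X}_k-{\bf x}_0)|$. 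Then $-\tfrac14 Z_kZ_j=\psi_k\psi_j$, and writing $\xi_k:=(Y_k,{\bf X}_k)$,
$$S_n(t)=\binom{n}{2}^{-1}\sum_{1\le j<k\le n} H_n(\xi_j,\xi_k),\qquad H_n(\xi_j,\xi_k):=\int \psi_j(t,u)\,\psi_k(t,u)\,w(u)\,du,$$
a $U$-statistic with symmetric ($h$-dependent) kernel. Since $K_h$ is real, $\E[\psi_1(t,u)]=\Im\!\big(A_h(u)/M(t,u)\big)$ with $A_h(u):=\E[e^{iuY_1}K_h({\bf X}_1-{\bf x}_0)]=\int K_h({\bf x}-{\bf x}_0)g^*_{\bf x}(u)\ell({\bf x})\,d{\bf x}$, so by independence $\E[S_n(t)]=\int\Im^2\!\big(A_h(u)/M(t,u)\big)w(u)\,du$, whereas $S(t)=\int\Im^2\!\big(g^*_{{\bf x}_0}(u)\ell({\bf x}_0)/M(t,u)\big)w(u)\,du$. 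Finally $\E[(S_n(t)-S(t))^2]=(\E[S_n(t)]-S(t))^2+\mathrm{Var}(S_n(t))$.

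For the bias, using $|a^2-b^2|\le|a-b|\,|a+b|$, $|\Im(\zeta)|\le|\zeta|$, $|M(t,u)|\ge 1-2P$, $|g^*_{\bf x}|\le1$ and $\sup_{\ell\in L(\alpha,M)}\|\ell\|_\infty<\infty$ ({\bf A1}), I obtain $|\E[S_n(t)]-S(t)|\le C\int|A_h(u)-g^*_{{\bf x}_0}(u)\ell({\bf x}_0)|\,w(u)\,du$ with $C$ free of $t,{\bf x}_0$. As $\int K=1$, this integrand equals $\big|\int K_h({\bf x}-{\bf x}_0)[g^*_{\bf x}(u)\ell({\bf x})-g^*_{{\bf x}_0}(u)\ell({\bf x}_0)]\,d{\bf x}\big|$, and using $g^*_{\bf x}(u)=M(\theta({\bf x}),u)f^*_{\bf x}(u)$ (Fourier transform of (\ref{conddensity}), with $M$ as in (\ref{M})) I would telescope $g^*_{\bf x}(u)\ell({\bf x})-g^*_{{\bf x}_0}(u)\ell({\bf x}_0)$ into three pieces bounded respectively by $|M(\theta({\bf x}),u)-M(\theta({\bf x}_0),u)|\le c(1+|u|)\|{\bf x}-{\bf x}_0\|^\alpha$ (via $|e^{i\theta}-1|\le|\theta|$ and the Lipschitz bounds on $\pi,a,b$ from {\bf A1}), by $|f^*_{\bf x}(u)-f^*_{{\bf x}_0}(u)|\le\varphi(u)\|{\bf x}-{\bf x}_0\|^\alpha$ ({\bf A2}), and by $|\ell({\bf x})-\ell({\bf x}_0)|\le M\|{\bf x}-{\bf x}_0\|^\alpha$ ({\bf A1}). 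Hence $|g^*_{\bf x}(u)\ell({\bf x})-g^*_{{\bf x}_0}(u)\ell({\bf x}_0)|\le\Phi(u)\|{\bf x}-{\bf x}_0\|^\alpha$ with $\Phi(u)=c_1(1+|u|)+c_2\varphi(u)+c_3$, which is $w$-integrable by {\bf A4}; integrating against $|K_h(\cdot-{\bf x}_0)|$ and rescaling ${\bf x}={\bf x}_0+h{\bf w}$ produces the factor $h^\alpha\int\|{\bf w}\|^\alpha|K({\bf w})|\,d{\bf w}<\infty$ ({\bf A3}), giving $|\E[S_n(t)]-S(t)|\le C_1' h^\alpha$. (For $\alpha>1$ one instead Taylor-expands ${\bf x}\mapsto g^*_{\bf x}(u)\ell({\bf x})$ at ${\bf x}_0$, the lower-order terms vanishing by the moment conditions on $K$.)

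For the variance I would use Hoeffding's decomposition, $\mathrm{Var}(S_n(t))\le\tfrac4n\zeta_1+\tfrac2{n^2}\zeta_2$, with $\zeta_2=\mathrm{Var}(H_n(\xi_1,\xi_2))$ and $\zeta_1=\mathrm{Var}\big(\E[H_n(\xi_1,\xi_2)\mid\xi_1]\big)$. By Cauchy--Schwarz in $u$ (as $w$ is a probability density) and independence, $\zeta_2\le\E[H_n(\xi_1,\xi_2)^2]\le\big(\E\int\psi_1(t,u)^2\,w(u)\,du\big)^2$; and $\E\int\psi_1^2 w\,du\le(1-2P)^{-2}\E[K_h({\bf X}_1-{\bf x}_0)^2]=(1-2P)^{-2}h^{-d}\int K^2({\bf w})\ell({\bf x}_0+h{\bf w})\,d{\bf w}\le C h^{-d}$, where $K\in\L_2$ follows from $K\in\L_1\cap\L_4$ by interpolation ($\|K\|_2^2\le\|K\|_1^{2/3}\|K\|_4^{4/3}$), so $\zeta_2\le Ch^{-2d}$. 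Similarly $\E[H_n(\xi_1,\xi_2)\mid\xi_1]=K_h({\bf X}_1-{\bf x}_0)\int\Im(e^{iuY_1}/M(t,u))\,m_h(t,u)\,w(u)\,du$ with $m_h(t,u):=\E[\psi_1(t,u)]$ bounded by $\|\ell\|_\infty\|K\|_1/(1-2P)$, so this conditional expectation is at most a constant times $|K_h({\bf X}_1-{\bf x}_0)|$ and $\zeta_1\le Ch^{-d}$. Therefore $\mathrm{Var}(S_n(t))\le C/(nh^d)+C/(nh^d)^2\le C_2/(nh^d)$ once $nh^d\ge1$, and combining with the bias, $\E[(S_n(t)-S(t))^2]\le C_1 h^{2\alpha}+C_2/(nh^d)$, with constants uniform in $n,h,t,{\bf x}_0$ because every estimate used only $|M(t,u)|\ge1-2P$, $|e^{iuY}|=1$, the common Lipschitz constant $M$, and $\sup_{\ell\in L(\alpha,M)}\|\ell\|_\infty<\infty$.

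The delicate step is the bias: one must split $g^*_{\bf x}(u)\ell({\bf x})-g^*_{{\bf x}_0}(u)\ell({\bf x}_0)$ so that each summand is governed by exactly one of {\bf A1}--{\bf A2}, pack all the $u$-dependence into a single $w$-integrable envelope $\Phi$ (which is precisely what {\bf A4} is designed to ensure), and keep every constant independent of $t$ and ${\bf x}_0$; the variance bound is then a routine second-moment computation for a $U$-statistic.
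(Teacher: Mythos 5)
Your proof is correct and follows essentially the same route as the paper's: the same bias--variance split, the expectation of the $U$-statistic written as a convolution $[(L(\cdot,t,u)\ell)\star K_h]({\bf x}_0)$ whose $h^\alpha$ bias comes from the Lipschitz $\alpha$-smoothness of ${\bf x}\mapsto g^*_{\bf x}(u)\ell({\bf x})$ (the paper isolates this as a separate lemma, with the same three-term telescoping over $M(\theta(\cdot),u)$, $f^*_\cdot(u)$ and $\ell$), and the same projection/Hoeffding decomposition of the variance into a linear part of order $(nh^d)^{-1}$ and a degenerate part of order $(nh^d)^{-2}$. The only blemish is the intermediate inequality $\E[H_n(\xi_1,\xi_2)^2]\le\bigl(\E\int\psi_1^2\,w\,du\bigr)^2$, which is Jensen in the wrong direction; it is harmless because the uniform-in-$u$ bound $\E[\psi_1(t,u)^2]\le Ch^{-d}$ that you use anyway yields $\zeta_2\le Ch^{-2d}$ directly.
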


\begin{theorem}{\bf (Consistency)}\label{cons}
Let suppose that assumptions of Proposition \ref{mse_contrast} hold and  consider model (\ref{modelrv1}) with likelihood given by (\ref{jointdensity}). If  the p.d.f $f_{{\bf x}_0
}$ is   zero-symmetric,   then  the estimator $\hat \theta_{n}$  defined in (\ref{estim_contrast}-\ref{estim_theta})  converges in probability to 
 $\theta({\bf x}_0
)=\theta_0
$  if  $h\rightarrow 0$  and $nh^d\rightarrow \infty$ as $n\rightarrow \infty$.

\end{theorem}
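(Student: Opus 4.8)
The plan is to argue as for the consistency of a minimum–contrast estimator, by combining two ingredients: (i) $S$ is a contrast which is \emph{well separated} at $\theta_0$ — this is Theorem~1~ii) together with continuity of $S$ and compactness of $\Theta$; and (ii) a uniform law of large numbers $\sup_{t\in\Theta}|S_n(t)-S(t)|\to 0$ in probability. Granting these, the usual $\arg\inf$ argument delivers $\hat\theta_n\to\theta_0$ in probability. Note first that $S_n$ is continuous on the compact set $\Theta$ (this will follow from the Lipschitz bound below), so a minimiser $\hat\theta_n$ exists and can be chosen measurably.

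Proposition~\ref{mse_contrast} gives only the \emph{pointwise} control $E[(S_n(t)-S(t))^2]\le C_1 h^{2\alpha}+C_2/(nh^d)$, which tends to $0$ under $h\to 0$, $nh^d\to\infty$. To promote this to a uniform statement I would establish equicontinuity in $t$. The crucial structural input is $|M(t,u)|\ge 1-2P>0$ on $\Theta\times\R$, combined with the smoothness of $t=(\pi,a,b)\mapsto M(t,u)$: the partial derivatives of $M$ are bounded by $C(1+|u|)$, whence $\|\partial_t M(t,u)^{-1}\|\le C(1+|u|)/(1-2P)^2$. Since $|e^{\pm iuY_k}|=1$, the map $t\mapsto Z_k(t,u,h)$ is Lipschitz with constant $C(1+|u|)\,|K_h({\bf X}_k-{\bf x}_0)|$, so $S_n$ is Lipschitz on $\Theta$ with the random constant
$$
L_n \;\le\; \frac{C'}{n(n-1)}\sum_{j\ne k}\Big(\int (1+|u|)\,w(u)\,du\Big)\,|K_h({\bf X}_j-{\bf x}_0)|\,|K_h({\bf X}_k-{\bf x}_0)| .
$$
Using $\int(1+|u|)w(u)\,du<\infty$ (from {\bf A4}), $\int|K_h({\bf x}-{\bf x}_0)|\,d{\bf x}=\|K\|_1<\infty$ ({\bf A3}), independence of the ${\bf X}_k$'s, and $\|\ell\|_\infty<\infty$ (a consequence of {\bf A1}), one gets $E[L_n]\le C''$ for a constant free of $n$ and $h$, hence $L_n=O_P(1)$. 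The same computation, with $|g_{{\bf x}_0}^*(u)|\le 1$, shows that $S$ is Lipschitz on $\Theta$ with a finite constant $L$, hence continuous. Now fix $\eta>0$ and a finite $\eta$–net $t_1,\dots,t_{N(\eta)}$ of the compact set $\Theta$; for $t$ with nearest net point $t_i$,
$$
|S_n(t)-S(t)| \;\le\; \max_{1\le i\le N(\eta)}|S_n(t_i)-S(t_i)| \;+\; (L_n+L)\,\eta .
$$
Taking the supremum over $t\in\Theta$, then a union bound and Markov's inequality with Proposition~\ref{mse_contrast} to make $\max_i|S_n(t_i)-S(t_i)|$ vanish in probability, and finally letting $\eta\downarrow 0$ (legitimate since $L_n+L=O_P(1)$), gives $\sup_{t\in\Theta}|S_n(t)-S(t)|\to 0$ in probability.

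It then remains to conclude. Fix $\varepsilon>0$ and set $\delta:=\inf\{S(t):t\in\Theta,\ \|t-\theta_0\|\ge\varepsilon\}$; the infimum is attained on a compact set at some $t^\star\ne\theta_0$, so $\delta>0$ by Theorem~1~ii). On the event $A_n:=\{\sup_{t\in\Theta}|S_n(t)-S(t)|<\delta/2\}$ one has, using $S(\theta_0)=0$,
$$
S_n(\hat\theta_n)\le S_n(\theta_0) < S(\theta_0)+\delta/2 = \delta/2 ,
$$
whereas $\|\hat\theta_n-\theta_0\|\ge\varepsilon$ would force $S_n(\hat\theta_n)>S(\hat\theta_n)-\delta/2\ge\delta-\delta/2=\delta/2$, a contradiction. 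Hence $\{\|\hat\theta_n-\theta_0\|\ge\varepsilon\}\subset A_n^{c}$, and since $P(A_n)\to 1$ by the previous step, $P(\|\hat\theta_n-\theta_0\|\ge\varepsilon)\to 0$; as $\varepsilon$ was arbitrary, $\hat\theta_n\to\theta_0$ in probability.

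The main obstacle is the second step — upgrading the pointwise bound of Proposition~\ref{mse_contrast} to a uniform one. The delicate point is controlling the Lipschitz modulus of the $U$–statistic $S_n$ \emph{uniformly in $n$ and in the bandwidth $h$}: the linear growth in $u$ of $\partial_t M(t,u)^{-1}$ must be absorbed by the moment assumption on $w$ ({\bf A4}), while the random design enters only through bandwidth–free $L^1$ norms of $K_h$ ({\bf A3}) and the uniform boundedness of $\ell$ ({\bf A1}). Everything else — continuity of $S$, positivity of the separation constant $\delta$, and the $\arg\inf$ argument — is routine.
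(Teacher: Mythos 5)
Your proposal is correct and follows essentially the same route as the paper: both arguments combine the pointwise mean-square control of Proposition~\ref{mse_contrast} with a Lipschitz-in-$t$ bound on $S_n$ whose random constant is the kernel double sum $\frac{1}{n(n-1)}\sum_{j\ne k}K_h({\bf X}_j-{\bf x}_0)K_h({\bf X}_k-{\bf x}_0)$ (controlled in expectation), and then conclude by a finite covering of the compact $\Theta$ together with the separation of the contrast $S$ at $\theta_0$ from Theorem~1~ii). The paper phrases this via the Dacunha-Castelle--Duflo event inclusions and the modulus $W(n,\xi_p)$ rather than an explicit uniform law of large numbers, but the ingredients and their roles are identical.
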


\vspace{0.2cm}
The following theorem establishes the asymptotic normality of the estimator $\hat \theta_n$ of $\theta_0
$. Recall that $\theta_0
 = \theta({\bf x}_0
)$ belongs to $\Theta$ and that there exists $l>0$ such that $\ell({\bf x}_0
) \geq l$. We see that the local smoothing with bandwidth $h>0$ deteriorates the rate of convergence to $\sqrt{nh^d}$ instead of $\sqrt{n}$ for the density model. 
%This is actually  natural as we extract information at a fixed point ${\bf x}_0 $ from random design $X_1,...,X_n$.
In the asymptotic variance we will use the following notation:
\begin{equation}\label{eq:Sigma1}
\dot J(\theta_0
,u): = \Im
 \left( - \frac{ \dot M(\theta_0
,u)}{M(\theta_0
,u)} \right)  f^*_{{\bf x}_0
}(u) \ell({\bf x}_0
),
\end{equation}
and
\begin{equation}\label{eq:Sigma2}
 V(\theta_0
,u_1,u_2) := \int \left( \frac{e^{iu_1 y}}{M(\theta_0
,u_1)} - \frac{e^{-iu_1 y}}{M(\theta_0
,-u_1 )}\right)
\left( \frac{e^{iu_2 y}}{M(\theta_0
,u_2 )} - \frac{e^{-iu_2 y}}{M(\theta_0
,-u_2 )}\right)
g_{{\bf x}_0
}(y) dy,
\end{equation}
where the  function $M(\cdot,\cdot)$ is defined in (\ref{M}). Note that $\dot J(\theta_0
,\cdot)$ is uniformly bounded by some constant and that $V$ is well defined for all $(u_1,u_2)\in \R\times \R$ and also uniformly bounded by some constant.
 
\begin{theorem}\label{asymptotic_normality}{\bf (Asymptotic normality)} Suppose that assumptions of Theorem \ref{cons} hold. The estimator $\hat \theta_n$ of $\theta_0
$ defined by (\ref{estim_contrast}-\ref{estim_theta}), with $h\to 0$ such that $n h^d \to \infty$ and such that $h^{2\alpha +d} = o(n^{-1})$, as $n\to \infty$, is asymptotically normally distributed:
$$
\sqrt{n h^d} (\hat \theta_n - \theta_0
) \to N(0, \mathcal{S}) \quad {\mbox{in distribution}},
$$
where $\mathcal{S} = \frac 14 \mathcal{I}^{-1} \Sigma \mathcal{I}$,  with 
$$\mathcal{I}=-\frac{1}{2}\int \dot J(\theta_0
,u) \dot J(\theta_0
,u)^{\top} dw(u),$$ 
and
$$
\Sigma:= \int \int \dot J(\theta_0
,u_1) \dot J^\top(\theta_0
,u_2) V(\theta_0
, u_1, u_2) w(u_1) w(u_2) du_1 du_2,
$$
for $\dot J$ defined in (\ref{eq:Sigma1}) and $V$ in (\ref{eq:Sigma2}).
\end{theorem}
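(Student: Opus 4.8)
\noindent\emph{Proof plan.} The plan is to treat $\hat\theta_n$ as a standard $Z$-estimator: localise via consistency, linearise the estimating equation by a Taylor expansion of $\nabla S_n$ in $t$, and then combine a law of large numbers for the Hessian with a central limit theorem for the rescaled score $\nabla S_n(\theta_0)$. By Theorem~\ref{cons}, $\hat\theta_n\to\theta_0$ in probability and, since $\theta_0\in\stackrel{\circ}{\Theta}$, with probability tending to one $\hat\theta_n$ is interior and satisfies $\nabla S_n(\hat\theta_n)=0$ (differentiation in $t$ is legitimate because $|M(t,u)|\ge 1-2P>0$ and $M$ is $C^\infty$ in $t$). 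A first-order Taylor expansion then gives
$$
0=\nabla S_n(\theta_0)+H_n\,(\hat\theta_n-\theta_0),\qquad H_n:=\int_0^1\nabla^2 S_n\big(\theta_0+s(\hat\theta_n-\theta_0)\big)\,ds,
$$
so that $\sqrt{nh^d}\,(\hat\theta_n-\theta_0)=-H_n^{-1}\,\sqrt{nh^d}\,\nabla S_n(\theta_0)$, and it remains to handle the two factors.

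\noindent For the Hessian, I would note that every entry of $\nabla^2 S_n(t)$ is again a degree-two $U$-statistic of the same structure as (\ref{estim_contrast}), with the factors $1/M(t,\pm u)$ replaced by their first two $t$-derivatives; these are bounded by a polynomial in $u$ that is $w$-integrable under {\bf A4}, so the proof of Proposition~\ref{mse_contrast} goes through unchanged and yields $E\big[\|\nabla^2 S_n(t)-\nabla^2 S(t)\|^2\big]\le C_1h^{2\alpha}+C_2/(nh^d)$ uniformly over $t$ in a fixed neighbourhood of $\theta_0$. Combined with $\hat\theta_n\to\theta_0$ and the continuity of $t\mapsto\nabla^2 S(t)$, this forces $H_n\to\nabla^2 S(\theta_0)$ in probability. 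Differentiating $S$ twice and using that $f^*_{{\bf x}_0}$ is real-valued, so that $\Im\big(g^*_{{\bf x}_0}(u)/M(\theta_0,u)\big)=\Im f^*_{{\bf x}_0}(u)=0$, one gets $\nabla^2 S(\theta_0)=2\int\dot J(\theta_0,u)\dot J(\theta_0,u)^\top w(u)\,du=-4\mathcal I$, which is positive definite — hence invertible — provided $u\mapsto\dot J(\theta_0,u)$ is not $w$-a.e.\ confined to a proper subspace of $\R^3$, a second-order strengthening of the identifiability proved in Theorem~1.

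\noindent For the score, I would write $\nabla S_n(\theta_0)$ through its Hoeffding (H\'ajek) decomposition $\nabla S_n(\theta_0)=E[\nabla S_n(\theta_0)]+\hat L_n+R_n$. The mean term is the kernel-smoothing bias: combining the Lipschitz-$\alpha$ control of $\pi,a,b,\ell$ in {\bf A1} and of $u\mapsto f^*_{\bf x}(u)$ in {\bf A2} with the moment condition in {\bf A3}, one obtains $\|E[\nabla S_n(\theta_0)]\|\le C h^\alpha$, hence $\sqrt{nh^d}\,\|E[\nabla S_n(\theta_0)]\|=O\big((nh^{2\alpha+d})^{1/2}\big)=o(1)$ precisely because $h^{2\alpha+d}=o(n^{-1})$ — this is where that bandwidth hypothesis enters. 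The linear part is
$$
\hat L_n=\frac{2}{n}\sum_{k=1}^n K_h({\bf X}_k-{\bf x}_0)\,\Psi_0(Y_k)+o_P\big((nh^d)^{-1/2}\big),\qquad \Psi_0(y):=\int\Im\!\Big(\frac{e^{iuy}}{M(\theta_0,u)}\Big)\dot J(\theta_0,u)\,w(u)\,du,
$$
a normalised sum over a triangular array of i.i.d.\ terms, centred up to $O(h^\alpha)$, with $nh^d\,\mathrm{Var}(\hat L_n)\to V_0$ for an explicit $V_0$; the Lyapunov condition for the array is checked using $\int K^4<\infty$ from {\bf A3}, giving $\sqrt{nh^d}\,\hat L_n\to N(0,V_0)$. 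The fully degenerate remainder $R_n$ has variance $O\big(1/(n^2h^{2d})\big)$ — a product of two bandwidth-localised $K_h$ factors — hence $\sqrt{nh^d}\,R_n=O_P\big((nh^d)^{-1/2}\big)=o_P(1)$, and therefore $\sqrt{nh^d}\,\nabla S_n(\theta_0)\to N(0,V_0)$.

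\noindent Slutsky's lemma then yields $\sqrt{nh^d}\,(\hat\theta_n-\theta_0)\to N(0,\mathcal S)$ with $\mathcal S=\tfrac1{16}\mathcal I^{-1}V_0\mathcal I^{-1}$, and the last step is to identify $V_0$: the algebraic key is that $e^{iuy}/M(\theta_0,u)-e^{-iuy}/M(\theta_0,-u)=2i\,\Im\big(e^{iuy}/M(\theta_0,u)\big)$, which turns $\mathrm{Cov}_{g_{{\bf x}_0}}(\Psi_0(Y))$ into a multiple of $\int\!\int\dot J(\theta_0,u_1)\dot J^\top(\theta_0,u_2)V(\theta_0,u_1,u_2)w(u_1)w(u_2)\,du_1\,du_2$, i.e.\ of $\Sigma$ as in (\ref{eq:Sigma2}), and reproduces the announced form of $\mathcal S$. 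I expect the main obstacle to be the score CLT: because the $U$-statistic kernel explodes like $h^{-d}$ in sup-norm, the Hoeffding decomposition must be carried out carefully to check that the degenerate remainder is negligible at the $\sqrt{nh^d}$ scale, that the $O(h^\alpha)$ bias bound — which requires {\bf A1}--{\bf A4} jointly, to produce a $w$-integrable majorant in $u$ — is killed by $h^{2\alpha+d}=o(n^{-1})$, and that the triangular-array Lyapunov condition holds (the reason $K\in\L_4$ is assumed). The uniform Hessian control and the non-degeneracy of $\mathcal I$ are more routine but still have to be written out.
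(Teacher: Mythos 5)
Your proposal is correct and follows essentially the same route as the paper: a Taylor expansion of $\dot S_n$ around $\theta_0$, a Hoeffding/H\'ajek decomposition of the score into a linear part (which you handle by a Lyapunov-type CLT for a triangular array, where the paper invokes the Hilbert-space CLT of Kandelaki and Sozonov with the same fourth-moment check via $\int K^4<\infty$), a degenerate remainder whose variance is $O(n^{-2}h^{-2d})$ and hence negligible at scale $\sqrt{nh^d}$, and convergence of the Hessian at the intermediate point to $\mathcal I$ via the Lipschitz-in-$t$ bounds and the variance estimates of Proposition~\ref{mse_contrast}. Two points where you are actually more explicit than the paper: you isolate the bias term $E[\dot S_n(\theta_0)]=O(h^\alpha)$ and show it is precisely where the undersmoothing condition $h^{2\alpha+d}=o(n^{-1})$ enters, and you flag that invertibility of $\mathcal I$ requires $u\mapsto\dot J(\theta_0,u)$ not to be $w$-a.e.\ confined to a proper subspace of $\R^3$ --- a non-degeneracy condition the paper leaves implicit.
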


The above results show that our  estimator of $\theta_0
$ behaves like any nonparametric pointwize estimator. This is indeed the case and we provide in the next theorem the best achievable convergence rates uniformly over the large set of functions involved in our model, see assumptions {\bf A1}-{\bf A2}.  For length matters,  we will just provide some hints of  proof of the next theorem. 

\begin{theorem}\label{minimax}{\bf (Minimax rates)} Suppose  {\bf A1}-{\bf A4} and consider  ${\bf x}_0
\in supp(\ell)$ fixed  such that $\ell({\bf x}_0
) \geq L_* >0$ for all $\ell\in L(\alpha,M)$ and $\theta_0
=\theta({\bf x}_0
)  \in \stackrel{\circ}{\Theta}$.
The estimator $\hat \theta_n$ of $\theta_0
$ defined by (\ref{estim_contrast}-\ref{estim_theta}), with $h \asymp n^{-1/(2\alpha +d)}$, as $n\to \infty$, is such that 
$$
\sup E[ \|\hat \theta_n - \theta_0
 \|^2] \leq C n^{-\frac{2\alpha}{2\alpha+d}}, 
$$
where the supremum is taken over all the functions ${\pi, a,b, \ell}$ and $f^*$ checking  assumptions  {\bf A1}-{\bf A2}. 
Moreover, 
$$
\inf_{T_n}\sup E[ \|T_n - \theta_0
\|^2] \geq c n^{-\frac{2\alpha}{2\alpha+d}},
$$
where $C, \, c>0$ depend only on $\alpha, M, \Theta, K$ and $w$, and the infimum is taken over the set of all the  estimators $T_n$ (measurable function of the observations $(X_1,\dots, X_n))$  of $\theta_0$.
\end{theorem}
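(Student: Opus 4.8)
\noindent The plan is to prove the two inequalities separately: the upper bound from Proposition~\ref{mse_contrast} (applied to $S_n$ and to its $t$-derivatives) together with a quadratic minorisation of the contrast $S$ near $\theta_0$, and the lower bound from a single two-point reduction in the spirit of Le~Cam.

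\noindent \emph{Upper bound.} First I would note that the constants produced in Proposition~\ref{mse_contrast} and in Theorem~\ref{cons} are uniform over the class of {\bf A1}--{\bf A2}, so that, with probability tending to one uniformly over that class, $\hat\theta_n$ is an interior point of a fixed neighbourhood of $\theta_0$ on which the stationarity condition $\nabla_t S_n(\hat\theta_n)=0$ holds; call this event $\Omega_n$. A mean-value expansion gives $\nabla_t S_n(\theta_0)=-\big(\int_0^1\nabla^2_t S_n(\theta_0+s(\hat\theta_n-\theta_0))\,ds\big)(\hat\theta_n-\theta_0)$, and the empirical Hessian converges, uniformly in a neighbourhood of $\theta_0$ and uniformly over the class, to $\nabla^2_t S(\theta_0)$. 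Since the integrand of $S$ vanishes at $\theta_0$ by the picking property, $\nabla^2_t S(\theta_0)=2\int\dot J(\theta_0,u)\dot J(\theta_0,u)^{\top}w(u)\,du$ with $\dot J$ as in (\ref{eq:Sigma1}), and this matrix is bounded below by a positive constant depending only on $\alpha,M,\Theta,K,w$: indeed $f^*_{{\bf x}_0}(u)\to1$ as $u\to0$ so the contribution of a fixed neighbourhood of the origin dominates the integral, and on that neighbourhood the score vectors $\Im(-\dot M(\theta_0,u)/M(\theta_0,u))$ are not collinear as soon as $\theta_0$ is interior with $a\neq b$, which is the local counterpart of the identifiability statement of Theorem~1. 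Hence, on $\Omega_n$, $\|\hat\theta_n-\theta_0\|\leq C\|\nabla_t S_n(\theta_0)\|$, and I would bound
$$
\E\big[\|\hat\theta_n-\theta_0\|^2\big]\leq C^2\,\E\big[\|\nabla_t S_n(\theta_0)\|^2\big]+\mathrm{diam}(\Theta)^2\,\mathbb{P}(\Omega_n^c).
$$
The first term is handled exactly as in Proposition~\ref{mse_contrast} after differentiating the $U$-statistic (\ref{estim_contrast}) in $t$ (the bias remains $O(h^\alpha)$ and the variance $O(1/(nh^d))$, the $t$-dependence entering only through the smooth factor $M(t,u)$ which is bounded away from zero), so it is $\leq C_1 h^{2\alpha}+C_2/(nh^d)$; the second term is $o(n^{-2\alpha/(2\alpha+d)})$ since $\mathbb{P}(\Omega_n^c)$ decays polynomially (Markov applied to higher moments of $S_n-S$ and of its derivatives, obtained by the same variance computation). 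Choosing $h\asymp n^{-1/(2\alpha+d)}$ balances $h^{2\alpha}$ and $1/(nh^d)$ and yields the rate $n^{-2\alpha/(2\alpha+d)}$.

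\noindent \emph{Lower bound.} Here I would run a two-point (Le~Cam) argument. Fix a symmetric, ${\bf x}$-independent error density $f$ with full support (so {\bf A2} holds trivially with $\varphi\equiv0$), a design density $\ell$ with $\ell({\bf x}_0)\geq L_*$, and functions $\pi,b$ making $(\pi({\bf x}_0),a_0({\bf x}_0),b({\bf x}_0))$ interior to $\Theta$; let the only varying object be the regression function $a$. Take $\psi:\R^d\to\R$ smooth, supported in the unit ball, with $\psi(0)=1$, and set $a_1(\cdot)=a_0(\cdot)+c\,h^\alpha\,\psi\big((\cdot-{\bf x}_0)/h\big)$ with $h\asymp n^{-1/(2\alpha+d)}$ and $c>0$ small: then $a_1\in L(\alpha,M)$, $\theta$ stays in $\Theta$ for $n$ large, and $a_1({\bf x}_0)-a_0({\bf x}_0)=c\,h^\alpha$. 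The corresponding joint densities $g,g_1$ from (\ref{jointdensity}) differ only for ${\bf x}$ in the ball $B=\{\|{\bf x}-{\bf x}_0\|\leq h\}$ (volume $\asymp h^d$), and for such ${\bf x}$, $\int_\R(g-g_1)^2/g\,dy\leq\|\ell\|_\infty\,\chi^2\big(f(\cdot-a_1({\bf x})),f(\cdot-a_0({\bf x}))\big)=O(|a_0({\bf x})-a_1({\bf x})|^2)=O(h^{2\alpha})$; hence $\chi^2(g_1,g)=\int_B\int_\R(g-g_1)^2/g\,dy\,d{\bf x}=O(h^{d+2\alpha})$, and the Kullback divergence of the $n$-fold products equals $n\,\mathrm{KL}(g_1,g)\leq n\,\chi^2(g_1,g)=O(n\,h^{d+2\alpha})=O(1)$ by the choice of $h$. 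Le~Cam's two-point inequality then gives $\inf_{T_n}\sup\E[\|T_n-\theta_0\|^2]\geq c'\,|a_1({\bf x}_0)-a_0({\bf x}_0)|^2\asymp h^{2\alpha}\asymp n^{-2\alpha/(2\alpha+d)}$.

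\noindent \emph{Where the difficulty lies.} The delicate part is the upper bound, and within it the \emph{uniform} positive-definiteness of $\nabla^2_t S(\theta_0)$ over the full nuisance class of {\bf A1}--{\bf A2}: one needs a lower bound on $|f^*_{{\bf x}_0}|$ on a fixed neighbourhood of the origin that is uniform over the admissible error densities (a mild uniform-integrability condition), the non-collinearity of the score vectors, and then the propagation of this uniformity through the localisation step so that $\mathbb{P}(\Omega_n^c)$ is genuinely negligible at the target rate. The lower bound, by contrast, is routine once one checks that the bump perturbation respects every structural constraint of the model (zero-symmetry of $f_{\bf x}$, the Lipschitz constant $M$, and membership in $\Theta$).
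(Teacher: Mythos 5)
Your proposal is correct in outline and, for the upper bound, follows essentially the route the paper itself sketches: the paper's argument for Theorem~\ref{minimax} is only a set of ``proof hints'' saying that $\hat\theta_n-\theta_0$ behaves like $\mathcal{I}^{-1}\dot S_n(\theta_0)$, a $U$-statistic whose dominant term has bias of order $h^{\alpha}$ and variance $O((nh^d)^{-1})$, so that the bias--variance compromise gives $h\asymp n^{-1/(2\alpha+d)}$ and the rate $n^{-2\alpha/(2\alpha+d)}$; your mean-value expansion, the identification of $\nabla^2_t S(\theta_0)=2\int\dot J\dot J^{\top}w\,du$, and the treatment of $\E[\|\nabla_t S_n(\theta_0)\|^2]$ via the Proposition~\ref{mse_contrast} computation are exactly the fleshed-out version of that hint. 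Where you genuinely diverge is the lower bound: the paper does not prove it at all but invokes the classical optimality theory for pointwise estimation of Lipschitz regression functions (Stone 1977, Ibragimov--Has'minskii 1981, Tsybakov 2009), whereas you give an explicit two-point Le~Cam reduction with a bump perturbation of $a$ at scale $h\asymp n^{-1/(2\alpha+d)}$ and a $\chi^2$/KL computation showing $n\,\mathrm{KL}=O(1)$. Your construction is sound (the bump of height $ch^\alpha$ and width $h$ stays in $L(\alpha,M)$ for $c$ small, respects zero-symmetry of $f$ since only the location function is perturbed, and the product KL bound is the standard one), and it is self-contained where the paper is not; what the paper's citation buys instead is brevity and the observation that the problem reduces to a known one. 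Two caveats, both of which you partly flag yourself: (i) the uniform positive-definiteness of $\int\dot J(\theta_0,u)\dot J(\theta_0,u)^{\top}w(u)\,du$ over the whole nuisance class of {\bf A1}--{\bf A2} is asserted but not proved either by you or by the paper (the paper simply uses $\mathcal{I}^{-1}$ in Theorem~\ref{asymptotic_normality} without establishing invertibility uniformly), and under {\bf A2} alone there is no quantitative lower bound on $|f^*_{{\bf x}_0}|$ near the origin uniform over the class, so this step would need an explicit additional hypothesis or a compactness argument; (ii) the control of $\mathbb{P}(\Omega_n^c)$ should go through the covering argument of the consistency proof rather than a pointwise Markov bound, though the resulting $O(h^{2\alpha}+(nh^d)^{-1})$ bound already suffices at the target rate, so no higher moments are actually needed there.
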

\noindent{\it Proof hints.} Throughout the proofs of the previous results we learn that the estimator $\hat \theta_n$ of $\theta_0
$, behaves asymptotically as $\dot S_n(\theta_0)$ which is a $U$-statistic with a dominant term whose bias is of order $h^{2\alpha}$ and whose variance is smaller than $C_2 (nh^d)^{-1}$. The bias-variance compromise will produce an optimal choice of the bandwidth $h$ of order $ n^{-1/(2\alpha +d)}$ and a rate $n^{-\frac{2\alpha}{2\alpha+d}}$. It is the optimal rate for estimating a Lipschitz $\alpha$-smooth regression function at a fixed point  and the optimality results in the previous theorem are a consequence of the general nonparametric problem, see Stone (1977),  Ibragimov and  Has'minski (1981) and  Tsybakov (2009).

%{\bf discuss choice of the bandwidth?}

\section{Practical behaviour}
\subsection{Algorithm}
We describe below the initialization scheme and the optimization method used to determine the estimates of the locations $a({\bf x}_k)$, $b({\bf x}_k)$ and the weight functions $\pi({\bf x}_k)$ for a  fixed sequence of  testing points $\displaystyle \left\{{\bf x}_k,~k=1,\dots, K\right\}$. To simply differentiate these testing points from  the design data points we will allocate specifically the index $k$ for the numbering  of the testing points  and the index $i$ for the numbering of the  dataset points, i.e.   $\displaystyle \left\{({\bf x}_i,y_i), ~i=1,\dots, n\right\}$. \\

\noindent {\it  Initialization}

\begin{enumerate}
\item For each design data point ${\bf x}_i$, $ i=1,\dots,n,$  fit a kernel regression smoothing  $\bar{m}({\bf x}_i)$ with local bandwidth $\bar h_{{\bf x}_i}$. The \texttt{ R} package  \texttt{lokerns}, see   Herrmann (2013),  can be used.
\item Classify each data point $({\bf x}_i, y_i)$,  $i=1,\dots, n$ according to:  if $y_i > \bar {m}({\bf x}_i)$ classify $({\bf x}_i,y_i)$ in group 1 associated with location $a(\cdot)$, otherwise classify it in  group 2  associated with $b(\cdot)$.
\item  For each ${\bf x}_k$, $k=1,\dots,K$,  obtain  initial value $\bar{a}({\bf x}_k)$, respectively  $\bar{b}({\bf x}_k)$, by fitting  a kernel regression smoothing  based on the  observations $({\bf x}_i,y_i)$ , $i=1,\dots, n$,  previously classified in group 1 with local bandwidth $\bar h_{1,{\bf x}_k}$, respectively  in group 2 with local bandwidth $\bar h_{2,{\bf x}_k}$.
\item Compute  the local bandwidth  $h_{{\bf x}_k}= \min(\bar h_{1,{\bf x}_k}, \bar h_{2,{\bf x}_k})$.
\item Fix an arbitrary single value $\bar \pi$  for all  the $\pi({\bf x}_k)$'s.
\end{enumerate}

\noindent{\it Estimation}

\begin{enumerate}
\item Generate  one  $w$-distributed i.i.d sample $(U_r)$, $r=1,\dots,N$  dedicated to the  pointwize Monte Carlo estimation of $S_n(t)$ defined by:
\begin{equation*}\label{estim_contrast_MC}
S^{MC}_{n}(t) = -\frac 1{4n (n-1)N} \sum_{j\ne k, j,k=1}^n
\sum_{r=1}^N Z_k(t,U_r,h) Z_j(t,U_r,h).
\end{equation*}
In  the Sections 4.2 and 5, we will consider $N=n$ and $w$ the p.d.f. corresponding to the mixture $0.1*{\mathcal N}(0,1)+0.9*{\mathcal U}_{[-2,2]}$.
\item Compute the minimizer $\hat \theta({\bf x}_k)=(\hat \pi({\bf x}_k),\hat a({\bf x}_k), \hat b({\bf x}_k))$  of  $S^{MC}_n(\cdot)$  evaluated  at each  point ${\bf x}_0
={\bf x}_k$, by using the starting values 
$(\bar \pi,\bar a({\bf x}_k), \bar b({\bf x}_k))$ and the local bandwidth $h_{{\bf x}_k}$.
\end{enumerate}
In our simulations,  the above minimization will  be, contrarily to the theoretical requirements,  deliberately done over a non-constrained space, i.e. generically $\theta(\cdot)\in [0.05,0.95]\times [A,B]^2$, with  $A<B$. Our goal is to analyze  experimentally if a performant  initialization procedure is able to prevent from spurious  phenomenons like the label switching or component merging occurring when $\pi({\bf x}_0)$ is close to $0.5$. This kind of information is actually very  relevant  to interpret correctly  some cross-over effects as the one we will observe in 
Fig. 6 (a). Note that   other initialization methods can be figured out. We can for instance use, similarly to Huang et al. (2013),  a mixture of polynomial regressions with constant proportions and variances to pick  initial values   $\bar{a}(x)$ and $\bar{b}(x)$, or   the  \texttt{R} package \texttt{flexmix}, see  Gruen et al. (2013),    that implements a general framework for finite mixture of regression models based on  EM-type  algorithms (we selected this latter approach for the analysis of  radiotherapy application in Section 5).

\subsection{Simulations}
In this section, we propose to  measure the performances of our  estimator $\hat \theta_n(\cdot)$ over a testing sequence  $\displaystyle\left\{ {\bf x}_k=k/K\right\}$, $k=1,\dots,K=20$. Given that in the simulation  setting the true function  $ \theta(\cdot)$ is known, we can compute, similarly to Huang et al. (2013),   the  Root Average Squared Errors (RASE) of our estimator.  
%This performance measure is computed for any generic  component  $s$ of $\theta$ by:
%$$RASE^2_{s} = \frac{1}{K}\sum_{k = 1}^{K}\left(\hat{s}({\bf x}_k) - s({\bf x}_k)\right)^2. $$ 
To this end we generate $M=100$ datasets  $({\bf X}_i^{[z]}, Y_i^{[z]})_{1\leq i\leq n}$, $z=1, \dots, M$  of sizes $n$= 400, 800, 1200,  for each of the scenario described below and, for each scalar parameter $s=a,~b,~\pi$,  denote by $RASE^{[z]}_{s}$ the RASE performance associated to the $z$-th dataset, defined by $RASE^{[z]}_{s}=(1/K\sum_{k = 1}^{K}   R_s^{[z]}(k))^{1/2}$, where $R_s^{[z]}(k) = \left(\hat{s}^{[z]}({\bf x}_k) - s({\bf x}_k)\right)^2$, and the empirical RASE by
\begin{eqnarray}\label{RASE}
RASE_s=\frac{1}{M}\sum_{z=1}^M RASE^{[z]}_{s}. 
\end{eqnarray}
%We will report for any generic component $s$ of $\theta$ the average $\mu_s$ and standard deviation $\sigma_s$ of $RASE_s$ defined by:
%\begin{eqnarray}
%\mu_s=\frac{1}{M}\sum_{z=1}^M RASE^{[z]}_{s} ,\quad \sigma_s=\frac{1}{M}\sum_{z=1}^M \left(RASE^{[z]}_{s}-\mu_s\right)^2.
%\end{eqnarray}
Let us also define the empirical  squared deviation at point  ${\bf x}_k$ by  $\nu_k = \frac{1}{M}\sum_{z = 1}^{M}R_s^{[z]}(k)$, and empirical variance of the squared deviation at ${\bf x}_k$ by  $\sigma^2_s(k) = \frac{1}{M - 1} \sum_{z= 1}^{M}\left(R_s^{[z]}(k) - \nu_k\right)^2$. From these quantities we deduce the averaged variance of the squared deviations defined by 
\begin{eqnarray}\label{sigma_s}
\sigma^2_s = \frac{1}{K} \sum_{k = 1}^{K}\sigma^2_s(k).
\end{eqnarray}
In all the simulation setups, we use  the same   mixing proportion function $\pi(\cdot)$: 
\begin{eqnarray*}
\pi({\bf x}) &=& \frac{\operatorname{sin}(3  \pi  x) - 1}{15} + 0.4,  \quad {\bf x}\in [0,1]. \\
\end{eqnarray*}
{\bf Gaussian  setup {\bf (G)}}. The errors $\varepsilon_{j,i}({\bf x})$'s  are distributed  according to a Gaussian topographical scaling model corresponding to  (\ref{topovar}), i.e.  $f$ is the ${\mathcal N}(0,1)$ p.d.f. when  the location and scaling functions  are
\begin{eqnarray*}
a({\bf x}) = 4 - 2  \operatorname{sin}(2\pi {\bf x}),~~ b({\bf x}) = 1.5 \operatorname{cos}(3\pi {\bf x})  - 3,~~ \sigma({\bf x}) = 0.9 \operatorname{exp}({\bf x}), \quad {\bf x}\in [0,1].
\end{eqnarray*}
{\bf Student   setup {\bf (T)}}. The errors  $\varepsilon_{j,i}({\bf x})$'s  are distributed  according to a Student distribution with continuous  degrees of freedom function  denoted $df({\bf x})$. 
The  locations  and degrees of freedom  functions  are  
\begin{eqnarray*}
a({\bf x})= 3 - 2  \operatorname{sin}(2\pi x), ~~b({\bf x}) = 1.5 \operatorname{cos}(3\pi x)  - 2, ~~df({\bf x}) = -5  x + 8, \quad {\bf x}\in [0,1].
\end{eqnarray*}
{\bf Laplace   setup {\bf (L)}}.  The errors  $\varepsilon_{j,i}({\bf x})$'s  are distributed  according to a  Laplace distribution with scaling function $\nu({\bf x})$. The locations and scaling functions  are
\begin{eqnarray*}
a({\bf x}) = 5 - 3  \operatorname{sin}(2\pi {\bf x}), ~~b({\bf x}) = 2 \operatorname{cos}(3\pi {\bf x})  - 4,~~ \nu({\bf x}) = {\bf x} + 1, \quad {\bf x}\in [0,1].
\end{eqnarray*}

\noindent {\it Comments on Tables 1-3.} We report for the simulation setups  {\bf (G)}, {\bf (T}) and {\bf (L)}  the quantities $RASE_s$ defined in (\ref{RASE}),  and between parenthesis $\sigma^2_s$ defined in (\ref{sigma_s}),  for $s=\pi, ~a,~b$. In these tables, we label our method as NMR-SE (Nonparametric  Mixture of Regression with Symmetric Errors). To illustrate the contribution of our method, we compare our results with the RASE obtained by using the local EM-type algorithm proposed by Huang et al. (2013) for Nonparametric Mixture of Regression models with Gaussian noises (method labeled for simplicity NMRG).  
When the errors of the simulated model are Gaussian, the NMRG
 estimation should outperform our method, since the NMRG
 method assumes correctly that the errors are normally distributed, while our method does not  make any  parametric assumption on the distribution of the errors. When the sample size $n= 400$, the NMRG
 is more precise than our method, since the $RASE_s$'s and $\sigma_s^2$'s  are both smaller for the NMRG
. When we increase the sample size of the simulated datasets to $n= 800,~1200$, our method becomes more competitive and yields  $RASE_s$'s and $\sigma_s^2$'s that are  lower than those obtained by NMRG
.  This  surprising  behavior  is  probably due to the fact  that  in model (\ref{modelrv1})  we impose 
the equality in law of the noises up to a shift parameter, when in  the NMRG
 approach possibly different variances are fitted to each kind of noise, increasing by the way drastically  the degrees of freedom of the model to be addressed. 
\begin{table}[h!]
\begin{center}
\begin{tabular}{  l l l l l }
Sample size & Method &$RASE_{\pi}~(\sigma^2_\pi)$ & $RASE_{a}~(\sigma^2_a)$ & $RASE_{b}~(\sigma^2_b)$ \\ \hline
\multirow{2}{*}{$n = 400$} & NMRG
 &  0.011 (0.015) & 0.523 (0.952) & 0.237 (0.415) \\
 & NMR-SE
 & 0.018 (0.034) & 0.661 (1.485) & 0.304 (0.833)\\ \hline
\multirow{2}{*}{$n = 800$} & NMRG
 & 0.010 (0.012) & 0.436 (0.767)& 0.206 (0.368) \\
 & NMR-SE
 & 0.006 (0.013) & 0.311 (0.696) & 0.145 (0.370)\\ \hline
\multirow{2}{*}{$n = 1200$} & NMRG
 &0.009 (0.013) & 0.469 (0.896) & 0.197 (0.340) \\
 & NMR-SE
 &  0.003 (0.008) & 0.209 (0.439) & 0.094 (0.230) \\ \hline
\end{tabular}
\caption{$RASE_z$'s and $\sigma^2_z$'s  for data with Gaussian Errors}
\end{center}
\end{table}
In Tables 2 and 3 we observe that  our  method has globally smaller $RASE_s$'s and $\sigma_s^2$'s.  This result is not surprising, given that in the estimation methodology of Huang et al. (2013), the distribution of the noise are then completely misspecified under the simulation setups {\bf (T)} and {\bf (L)}.
Note however, that when the sample size is small $n = 400$, the NMRG
 displays better results, which  can be explained by the fact that when we generate small size datasets, the points that are supposed to be in the tails of the non-normal distributions are less likely to appear in the dataset. So in that case it can be reasonable to assume that the Gaussian distribution approximates the errors distribution well.

\begin{table}[h]
\begin{center}
\begin{tabular}{  l l l l l }
Sample size & Method &$RASE_{\pi}~(\sigma^2_\pi)$ & $RASE_{a}~(\sigma^2_a)$ & $RASE_{b}~(\sigma^2_b)$ \\ \hline
\multirow{2}{*}{$n = 400$} & NMRG
 &  0.013 (0.018) & 0.342 (0.631) & 0.126 (0.205) \\
 & NMR-SE
 & 0.012 (0.025) & 0.294 (0.664) & 0.117 (0.249)  \\ \hline
\multirow{2}{*}{$n = 800$} & NMRG
 & 0.011 (0.014) & 0.236 (0.377) & 0.110 (0.189) \\
 & NMR-SE
 & 0.004 (0.008) & 0.108(0.238) & 0.047 (0.093)  \\ \hline
\multirow{2}{*}{$n = 1200$} & NMRG
 & 0.010 (0.013) & 0.216 (0.352) & 0.099 (0.153) \\
 & NMR-SE
 & 0.003 (0.006) & 0.067 (0.125) & 0.035 (0.072) \\ \hline
\end{tabular}
\caption{$RASE_z$'s and $\sigma^2_z$'s  for data with Student Errors}
\end{center}
\end{table}

\begin{table}[h!]
\begin{center}
\begin{tabular}{  l l l l l }
Sample size & Method &$RASE_{\pi}~(\sigma^2_\pi)$ & $RASE_{a}~(\sigma^2_a)$ & $RASE_{b}~(\sigma^2_b)$ \\ \hline
\multirow{2}{*}{$n = 400$} & NMRG
 &  0.012 (0.004) & 0.250 (0.156) & 0.108 (0.036)\\
 & NMR-SE
 & 0.022 (0.012)  & 0.462 (0.623) & 0.105 (0.088) \\ \hline
\multirow{2}{*}{$n = 800$} & NMRG
 & 0.009 (0.003) & 0.202 (0.100) & 0.091 (0.036) \\
 & NMR-SE
 & 0.004 (0.002) & 0.109 (0.010) & 0.039 (0.014) \\ \hline
\multirow{2}{*}{$n = 1200$} & NMRG
 & 0.009 (0.003) & 0.192 (0.082) & 0.091 (0.035) \\
 & NMR-SE
 & 0.002 (0.001) & 0.064 (0.025) & 0.027 (0.010) \\ \hline
\end{tabular}
\caption{$RASE_z$'s and $\sigma^2_z$'s  for data with Laplace Errors}
\end{center}
\end{table}

\noindent{\it Comments on Figures 1-5}. To illustrate the sensitivity of our method and compare it graphically   to  the NMRG
 approach we plot in  Fig. 1 different  samples  coming  from  the setups {\bf (G)}, {\bf (T)}, and {\bf (L)}  for  $n=1200$, and  in blue lines the corresponding  true location functions $a(\cdot)$ and $b(\cdot)$. In Fig. 2, respectively Fig. 3,  we plot in grey the $M=100$  segment-line interpolation curves  obtained by   connecting the points  $({\bf x}_k,\hat s^{[z]}({\bf x}_k))$, $k=1,\dots,K$ where $s(\cdot)=a(\cdot)$, $b(\cdot)$  for  the  NMRG
 method, respectively our NMR-SE
 method.  In Fig. 4 and 5 we do  the same  for  $s(\cdot)=\pi(\cdot)$.  In  Fig. 2-5  the dashed red lines represent   the mean curves obtained by   connecting the points  $({\bf x}_k, \bar s({\bf x}_k))$, $k=1,\dots,K$ with $\bar s({\bf x}_k)=1/M\sum_{z=1}^M \hat s^{[z]}({\bf x}_k) $ and  $s(\cdot)=a(\cdot)$, $b(\cdot)$ and $\pi(\cdot)$. 
Let us observe first that the good behavior of the NMR-SE
 method  is confirmed by the small variability of the curves in Fig. 3 and 5 compared to those in Fig. 2 and 4  corresponding to the NMRG
 method.   Secondly it is important to notice that sometime, since we did not constrained or method to have $\pi\in [p,P]$ with $0<p<P<1/2$, we  run into some spurious estimation due to label switching or component merging phenomenon.

\noindent {\it Label switching.}  This well known phenomenon,  due to the lack of identifiability when the parametric space is not lexicographically ordered, translate into our case by
a double-representation of the mixture model (\ref{conddensity}), i.e.
\begin{eqnarray*}
 \pi({\bf x })f_{\bf x}(y-a({\bf x }))
+(1-\pi({\bf x}))f_{\bf x}(y-b({\bf x}))= \pi'({\bf x })f_{\bf x}(y-a'({\bf x }))
+(1-\pi'({\bf x}))f_{\bf x}(y-b'({\bf x}))
\end{eqnarray*}
where $a'(\cdot)=b(\cdot)$, $b'(\cdot)=a(\cdot)$, and $\pi'(\cdot)=1-\pi(\cdot)$. This switching phenomenon is observable on the interval $[0,0.2]$ of Fig. 3 (b) where  the two populations of the mixture strongly overlap, see Fig. 1 (b).  \\

\noindent {\it Component merging.} When $\pi(\cdot)$  is close to 0.5  it is  actually hard to decide  if  we have only  one  shifted symmetric distribution, i.e. $g_{\bf x}(y)=1*f(y-c({\bf x}))+0$ where $c({\bf x})=(b+a)({\bf x})/2$ and $f(y)=1/2 f(y+(b-a)({\bf x})/2)+
1/2f(y-(b-a)({\bf x})/2)$ or a balanced two-component mixture $g_{\bf x}(y)=1/2 f(y-a({\bf x}))+1/2 f(y-b({\bf x}))$. This phenomenon happens clearly   when $\hat \pi^{[z]}(\cdot)$ is unexpectedly  attracted  by the single values  0 or 1, as it occurs  sometimes   on  the intervals $[0,0.2]$ or  $[0.8,1]$, see  Fig. 5 (a-c).

\begin{figure}[h!]
\centering
\begin{subfigure}[b]{0.3\textwidth}
\includegraphics[width = \textwidth]{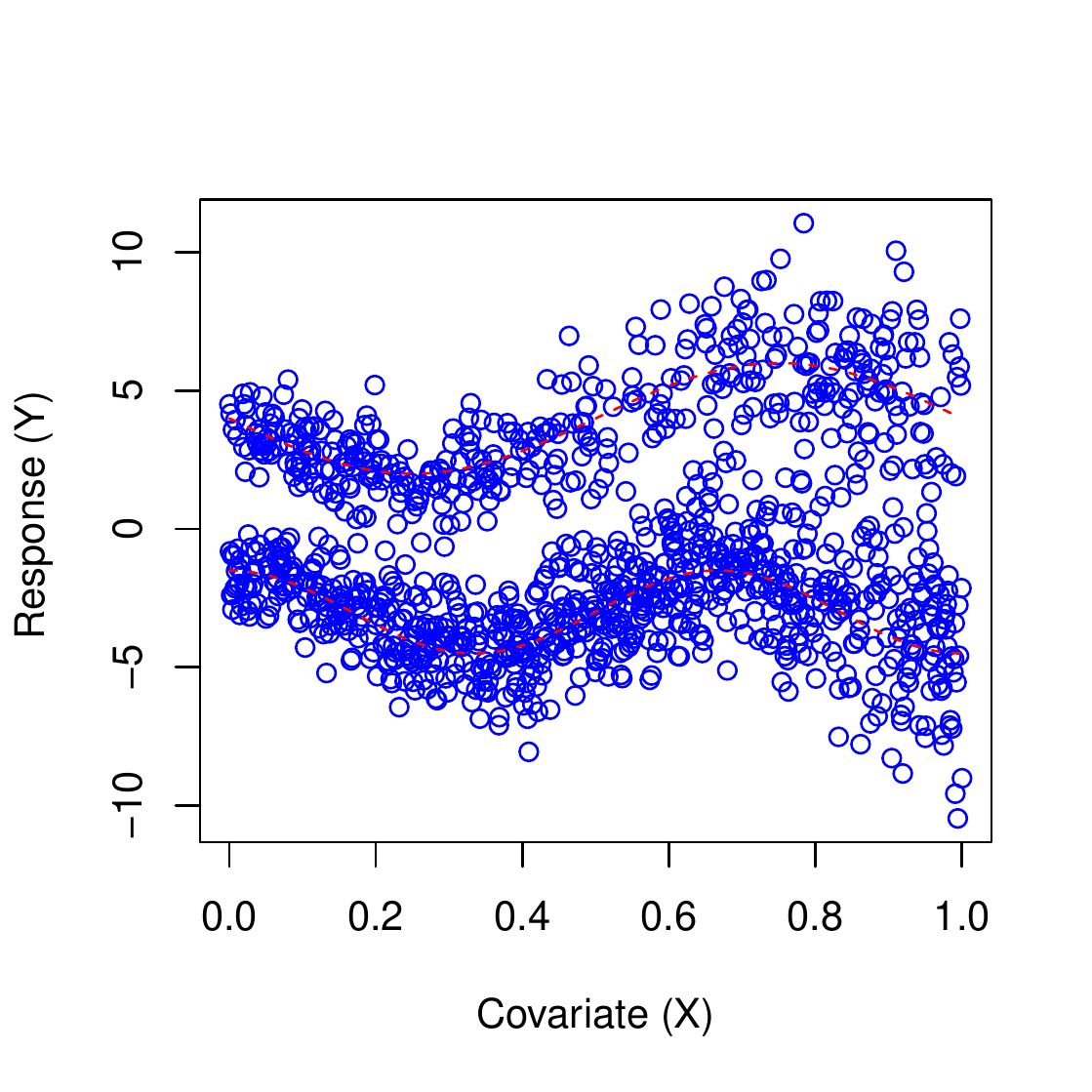}
\caption{Gaussian distribution}
\end{subfigure}
\begin{subfigure}[b]{0.3\textwidth}
\includegraphics[width = \textwidth]{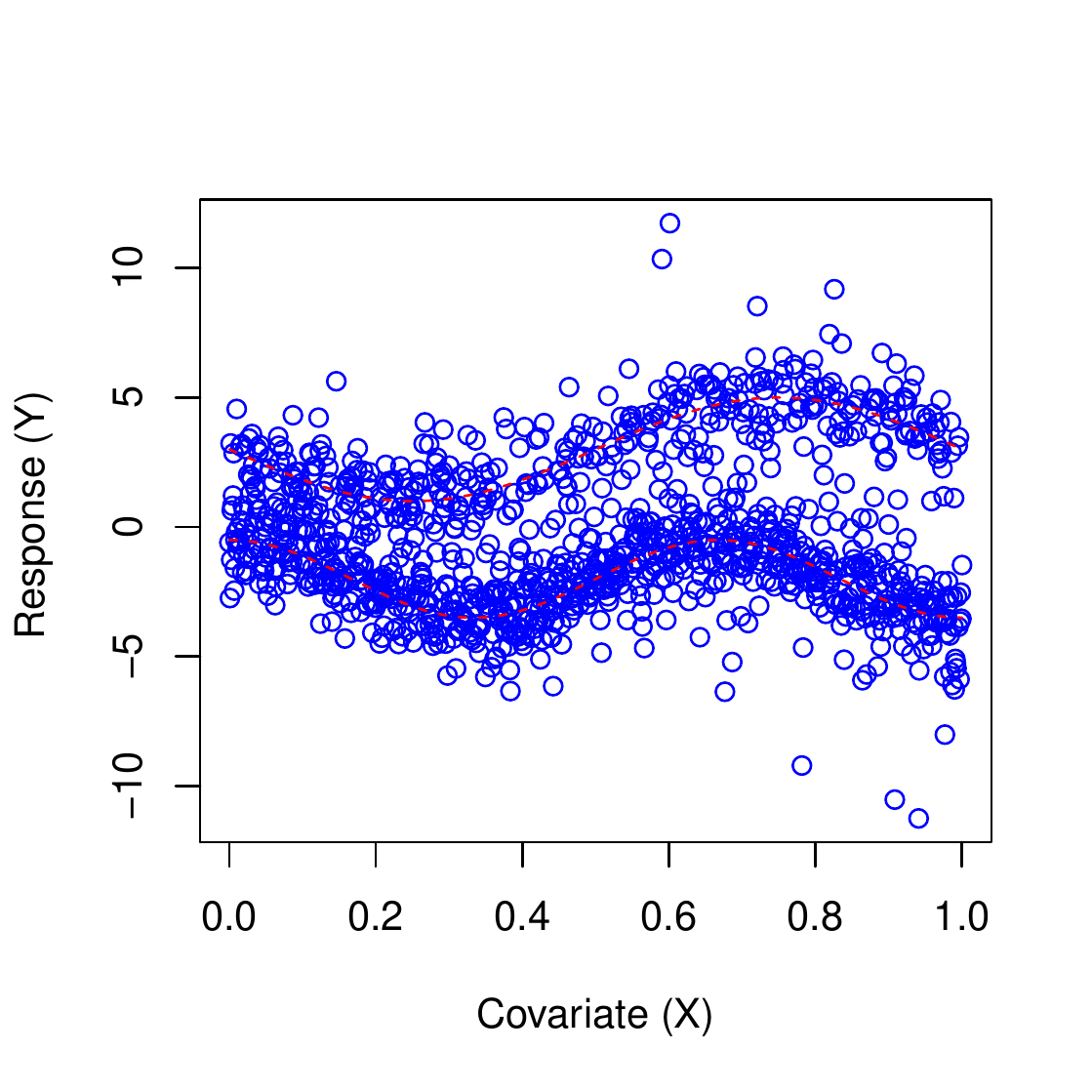}
\caption{Student distribution}
\end{subfigure}
\begin{subfigure}[b]{0.3\textwidth}
\includegraphics[width = \textwidth]{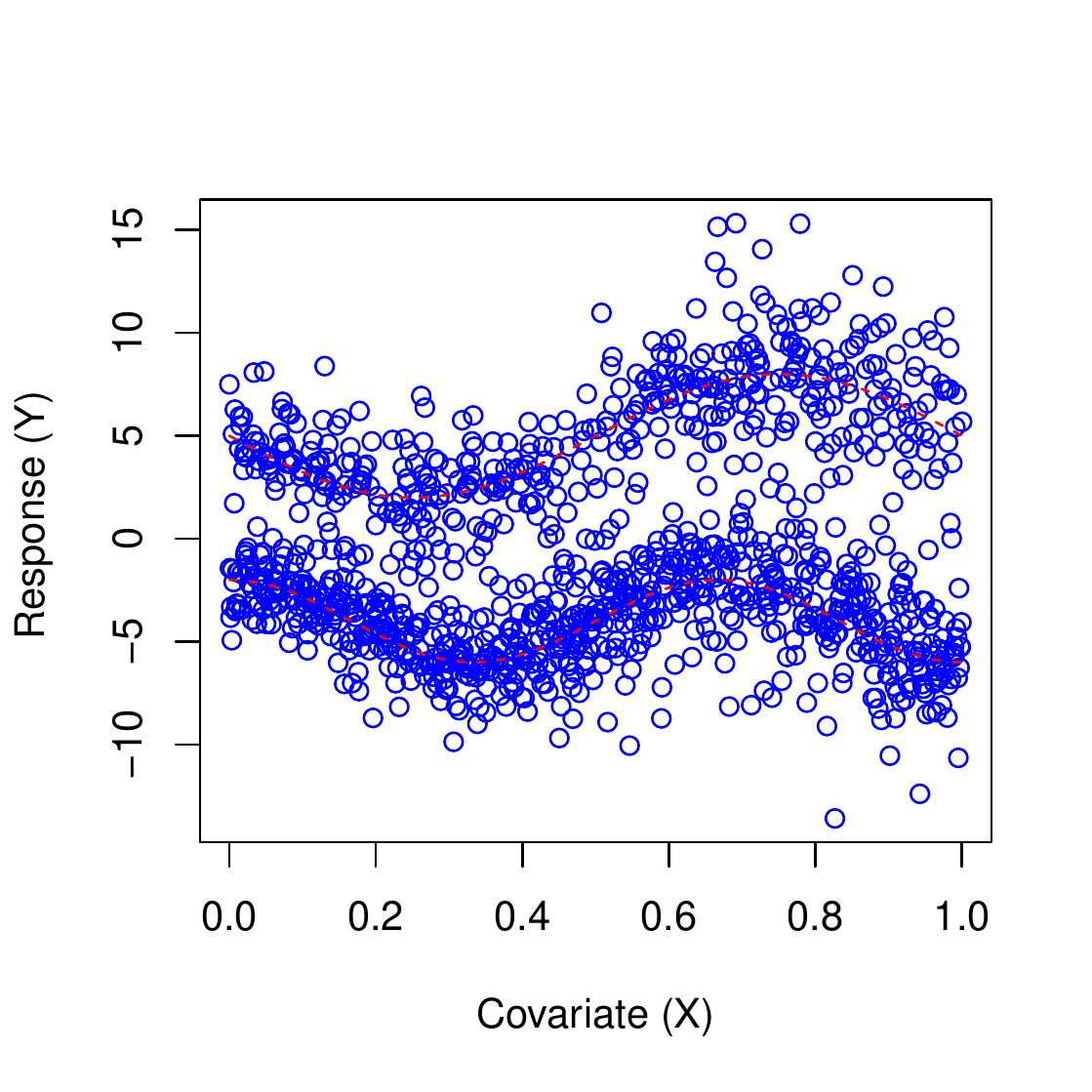}
\caption{Laplace distribution}
\end{subfigure}
\caption{Examples of simulated datasets with different distribution errors}
\end{figure}

\begin{figure}[h!]
\centering
\begin{subfigure}[b]{0.3\textwidth}
\includegraphics[width = \textwidth]{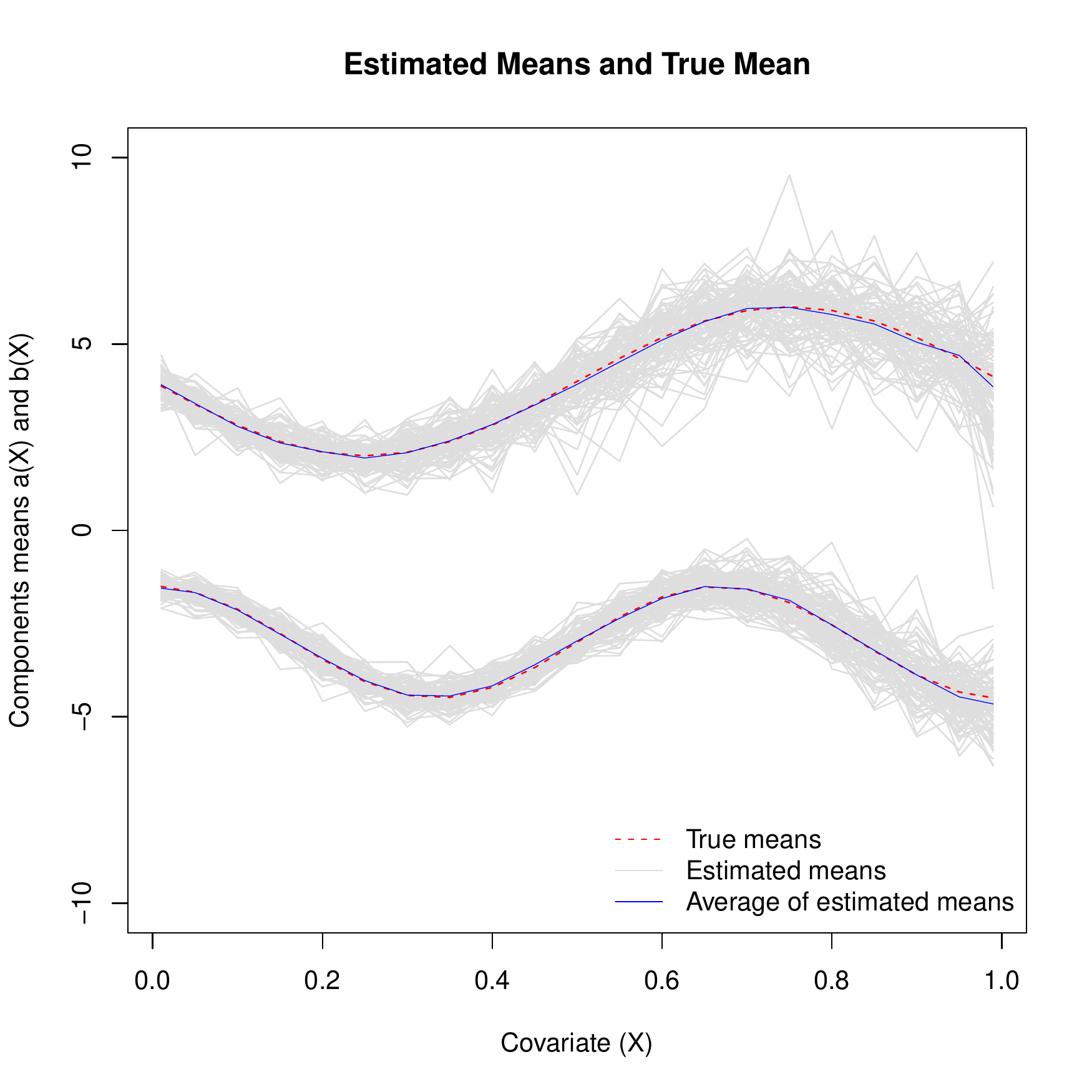}
\caption{Gaussian distribution}
\end{subfigure}
\begin{subfigure}[b]{0.3\textwidth}
\includegraphics[width = \textwidth]{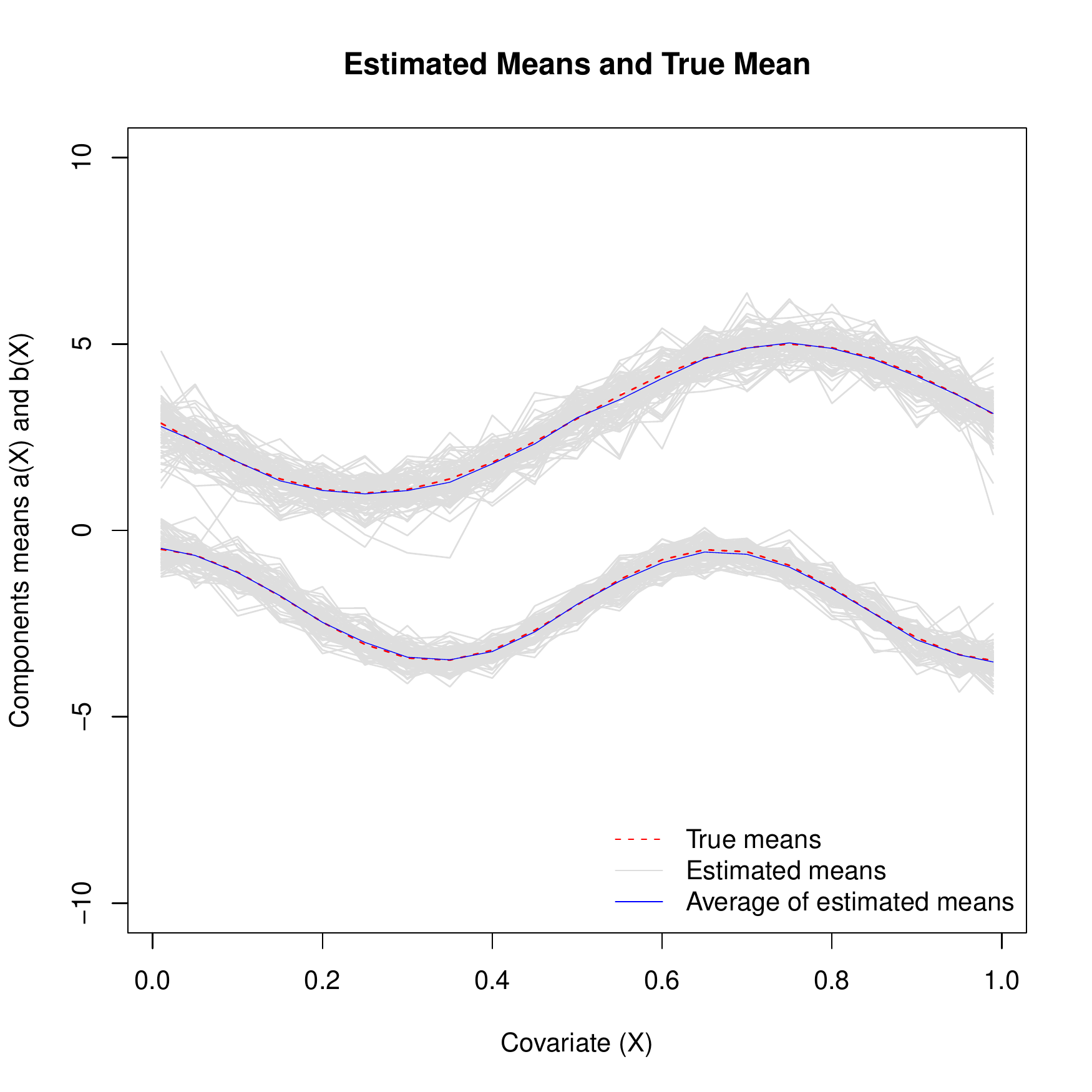}
\caption{Student distribution}
\end{subfigure}
\begin{subfigure}[b]{0.3\textwidth}
\includegraphics[width = \textwidth]{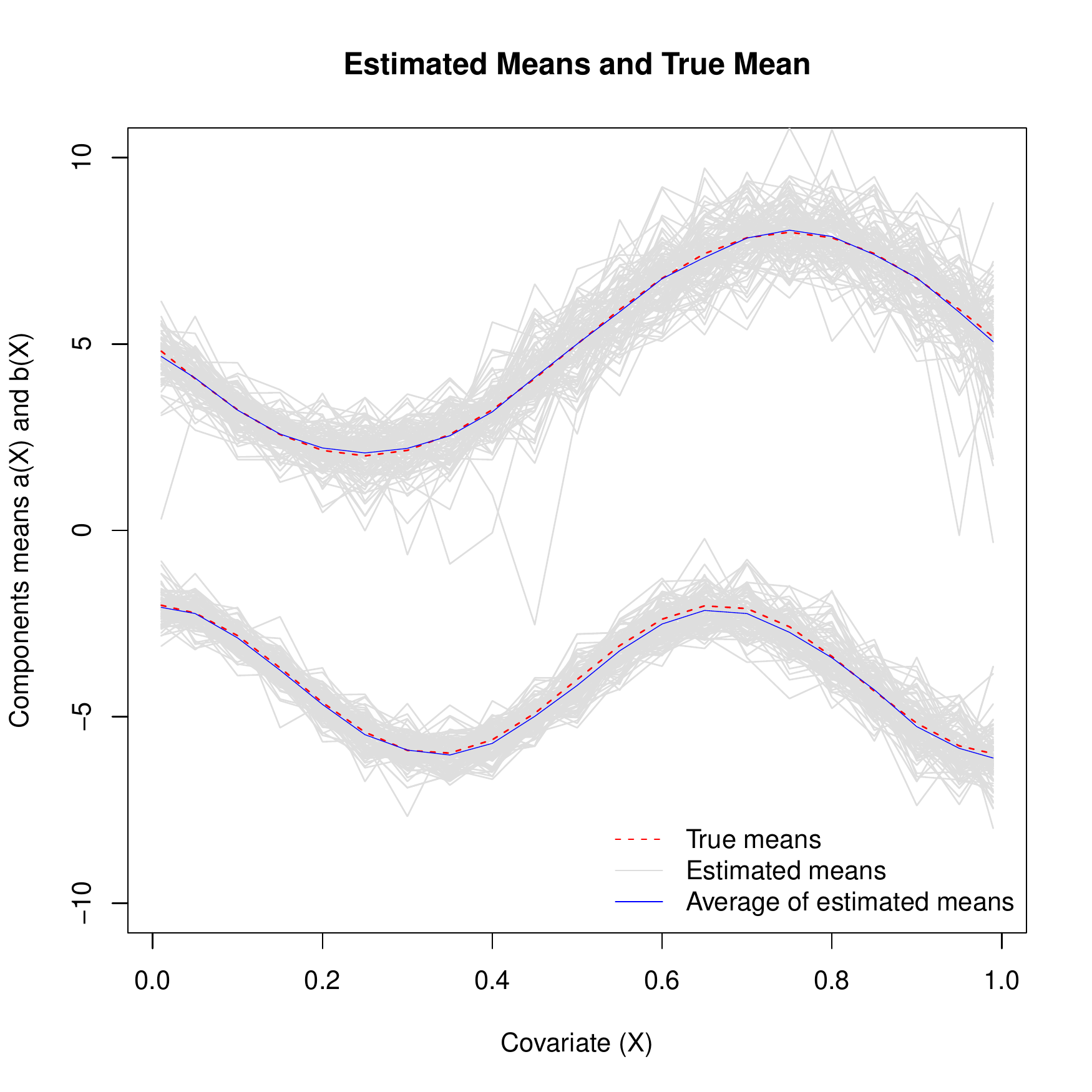}
\caption{Laplace distribution}
\end{subfigure}
\caption{Mean Curves estimated with NMRG
}
\end{figure}

\begin{figure}[h!]
\centering
\begin{subfigure}[b]{0.3\textwidth}
\includegraphics[width = \textwidth]{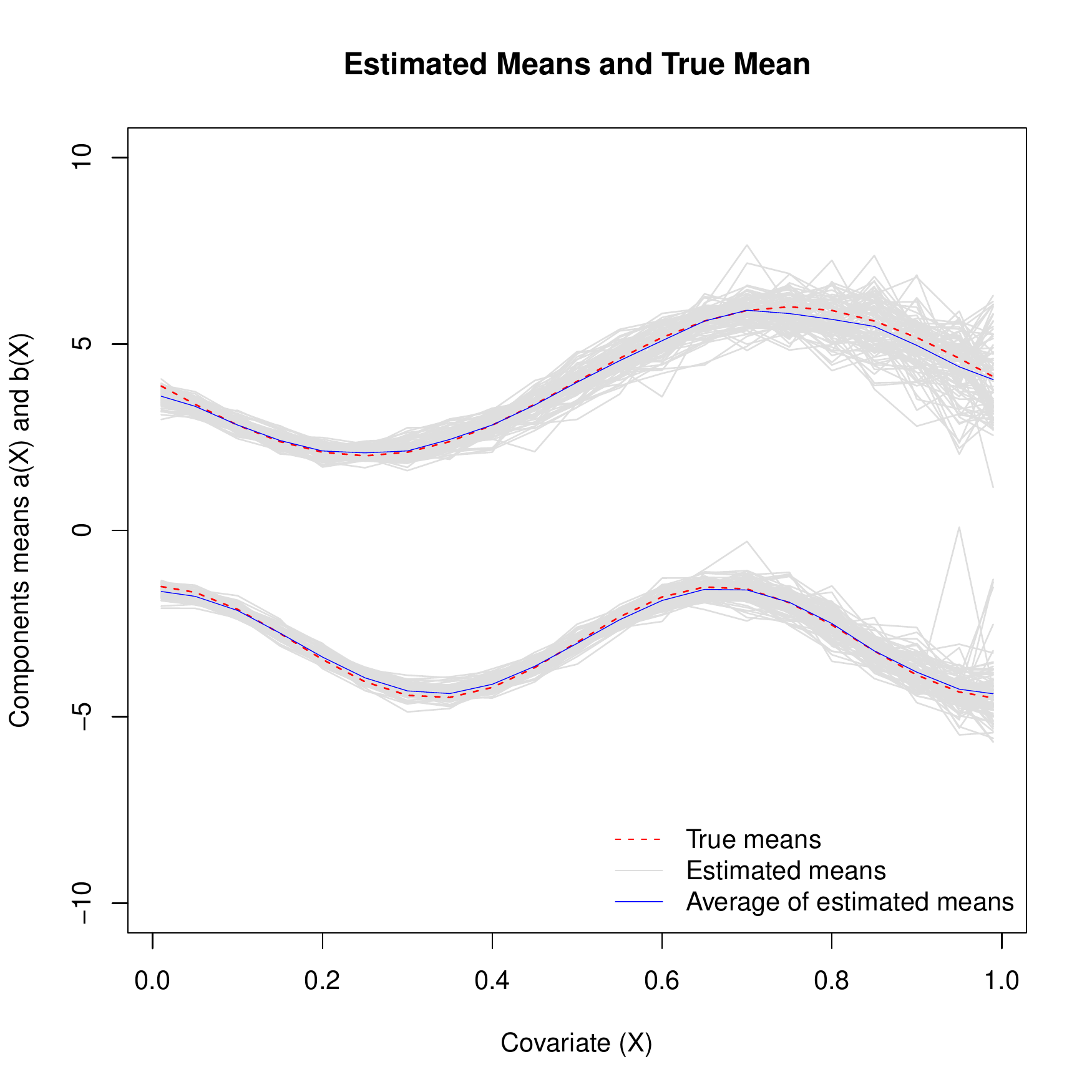}
\caption{Gaussian distribution}
\end{subfigure}
\begin{subfigure}[b]{0.3\textwidth}
\includegraphics[width = \textwidth]{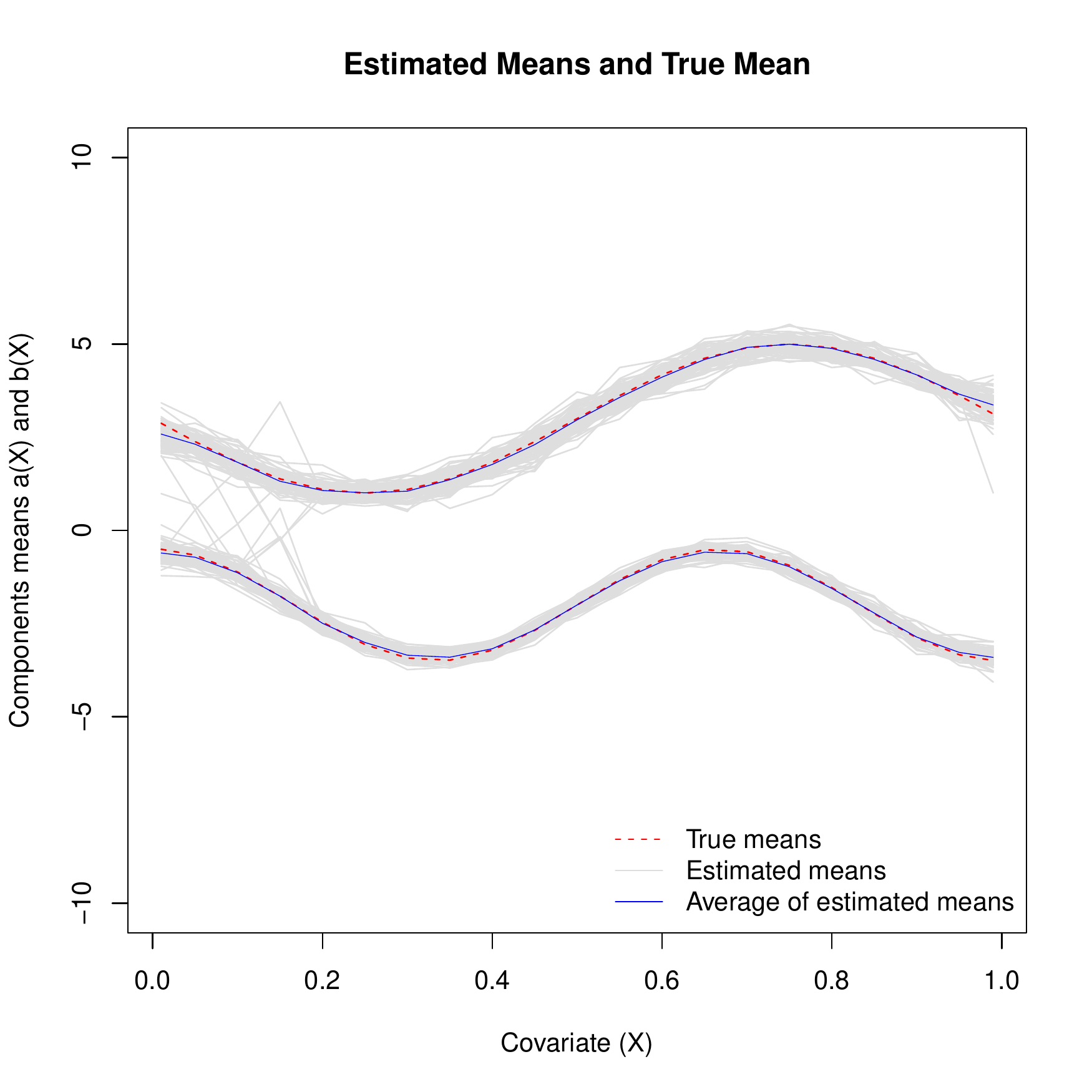}
\caption{Student distribution}
\end{subfigure}
\begin{subfigure}[b]{0.3\textwidth}
\includegraphics[width = \textwidth]{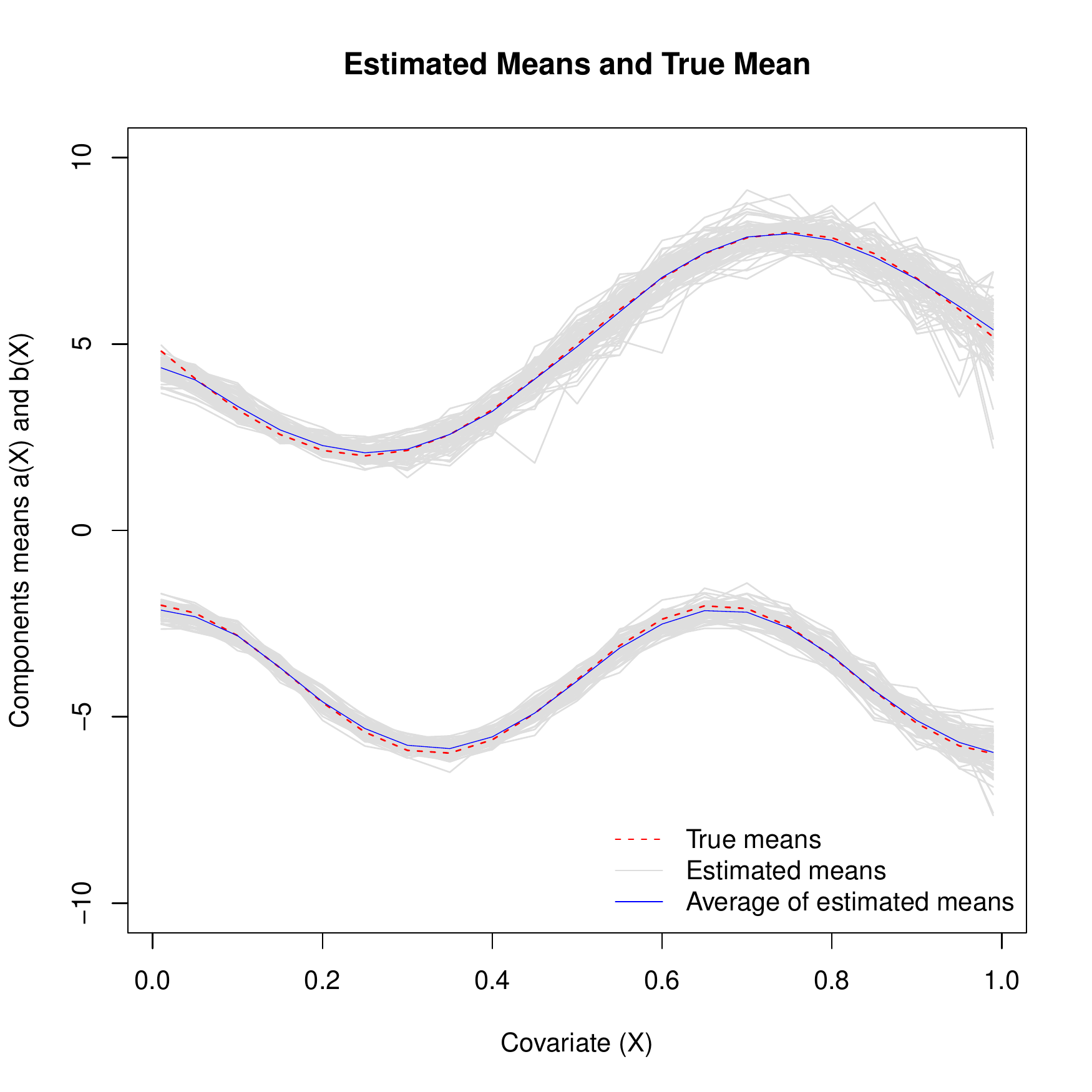}
\caption{Laplace distribution}
\end{subfigure}
\caption{Mean Curves estimated with NMR-SE
}
\end{figure}

\begin{figure}[h!]
\centering
\begin{subfigure}[b]{0.3\textwidth}
\includegraphics[width = \textwidth]{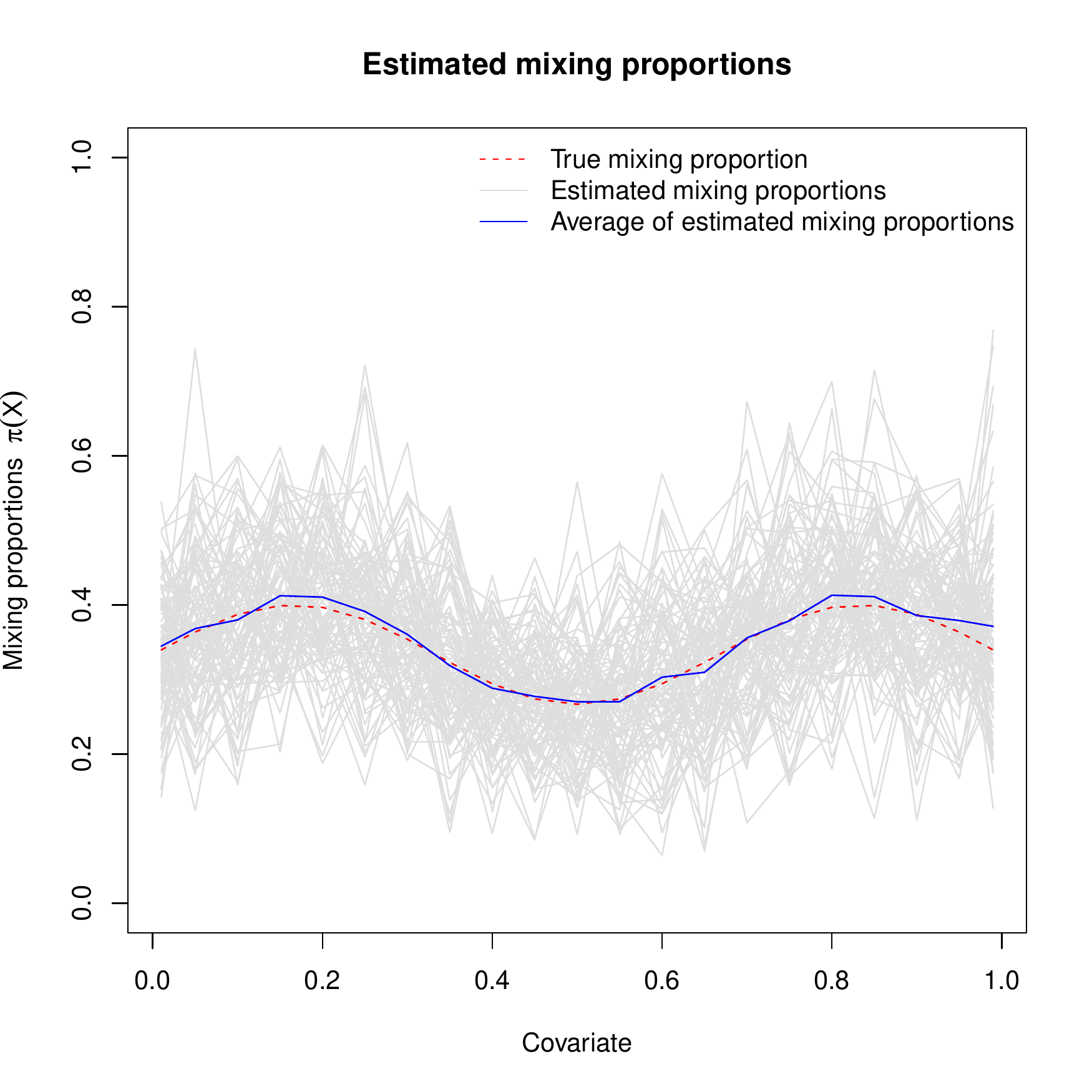}
\caption{Gaussian distribution}
\end{subfigure}
\begin{subfigure}[b]{0.3\textwidth}
\includegraphics[width = \textwidth]{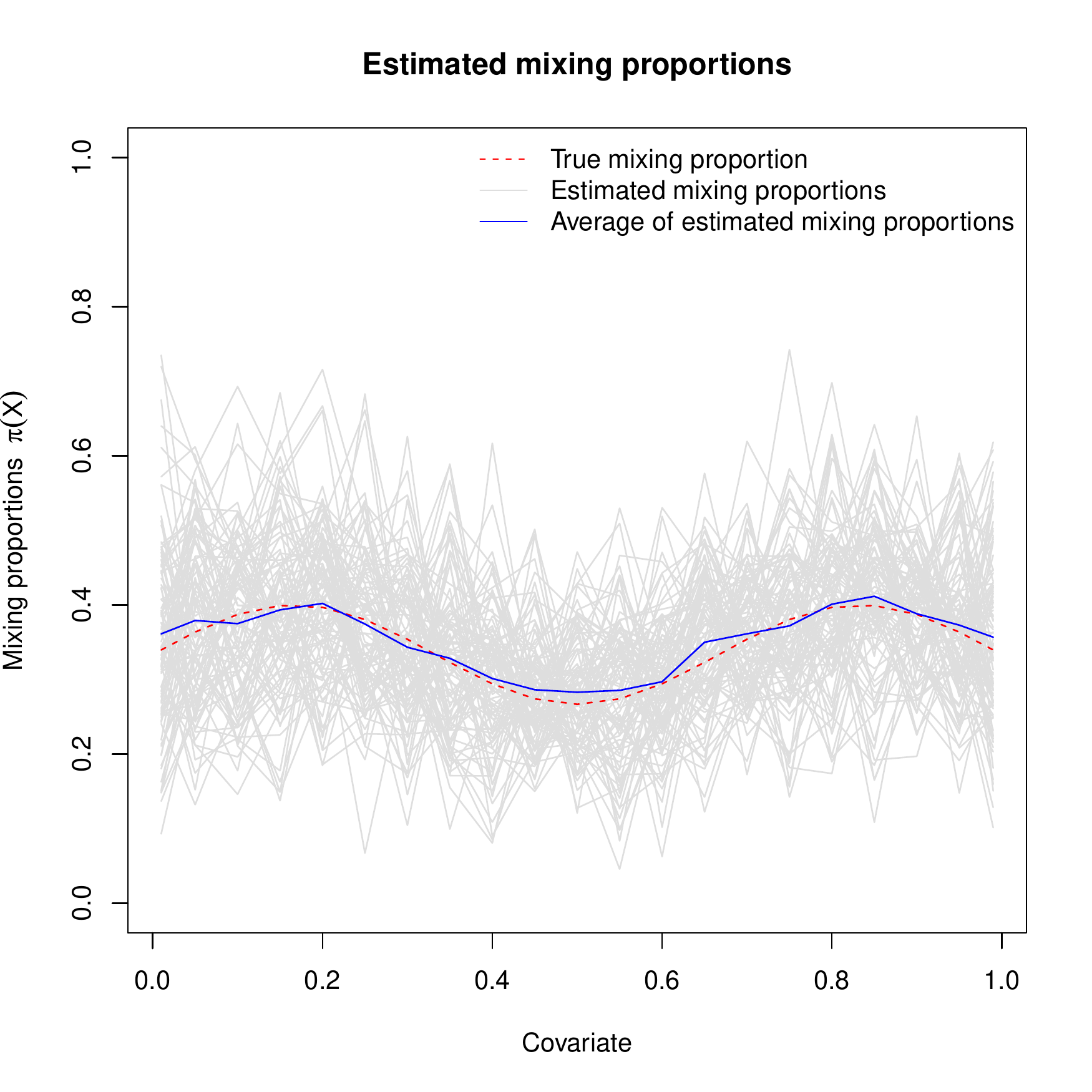}
\caption{Student distribution}
\end{subfigure}
\begin{subfigure}[b]{0.3\textwidth}
\includegraphics[width = \textwidth]{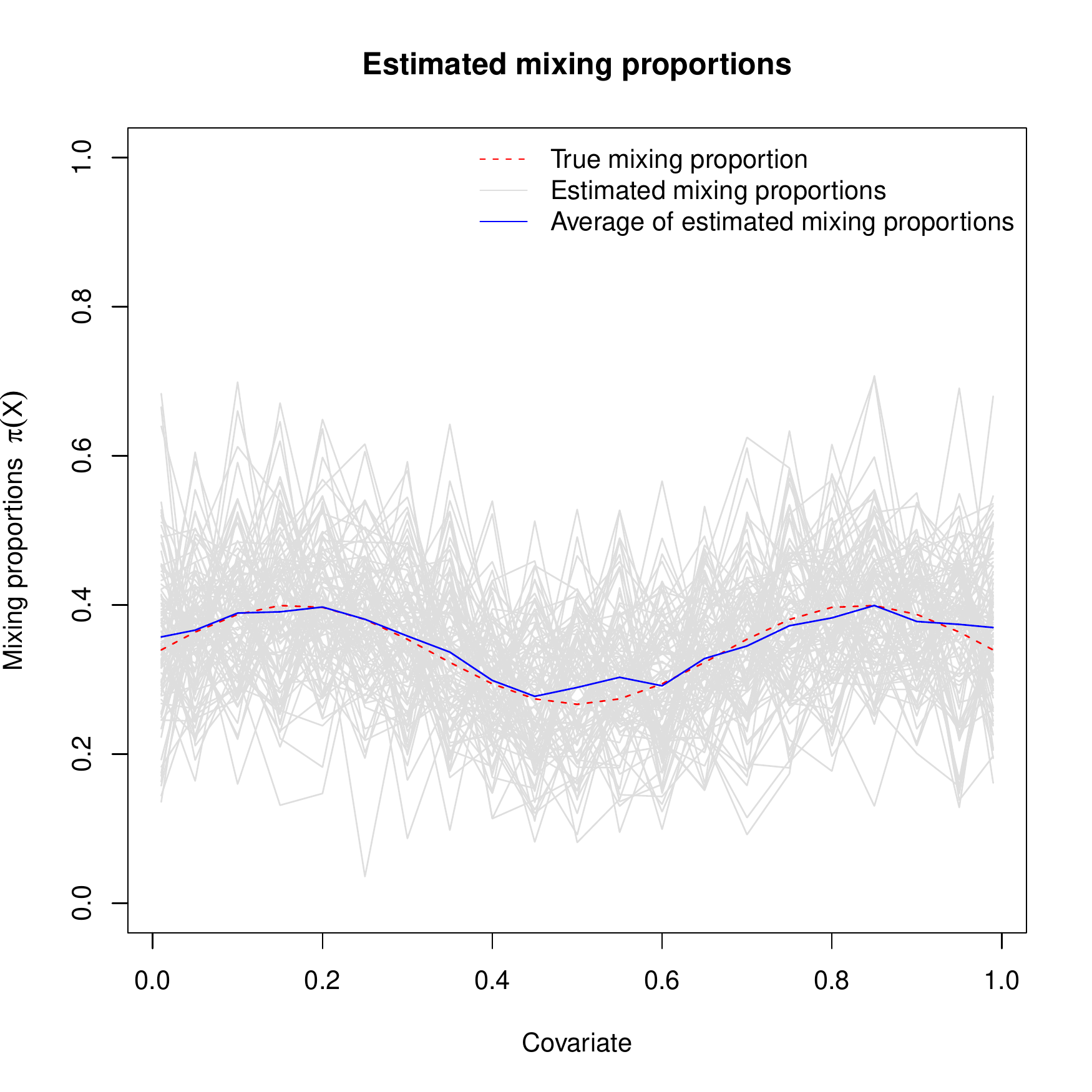}
\caption{Laplace distribution}
\end{subfigure}
\caption{Mixing proportions estimated with NMRG
}
\end{figure}

\begin{figure}[h!]
\centering
\begin{subfigure}[b]{0.3\textwidth}
\includegraphics[width = \textwidth]{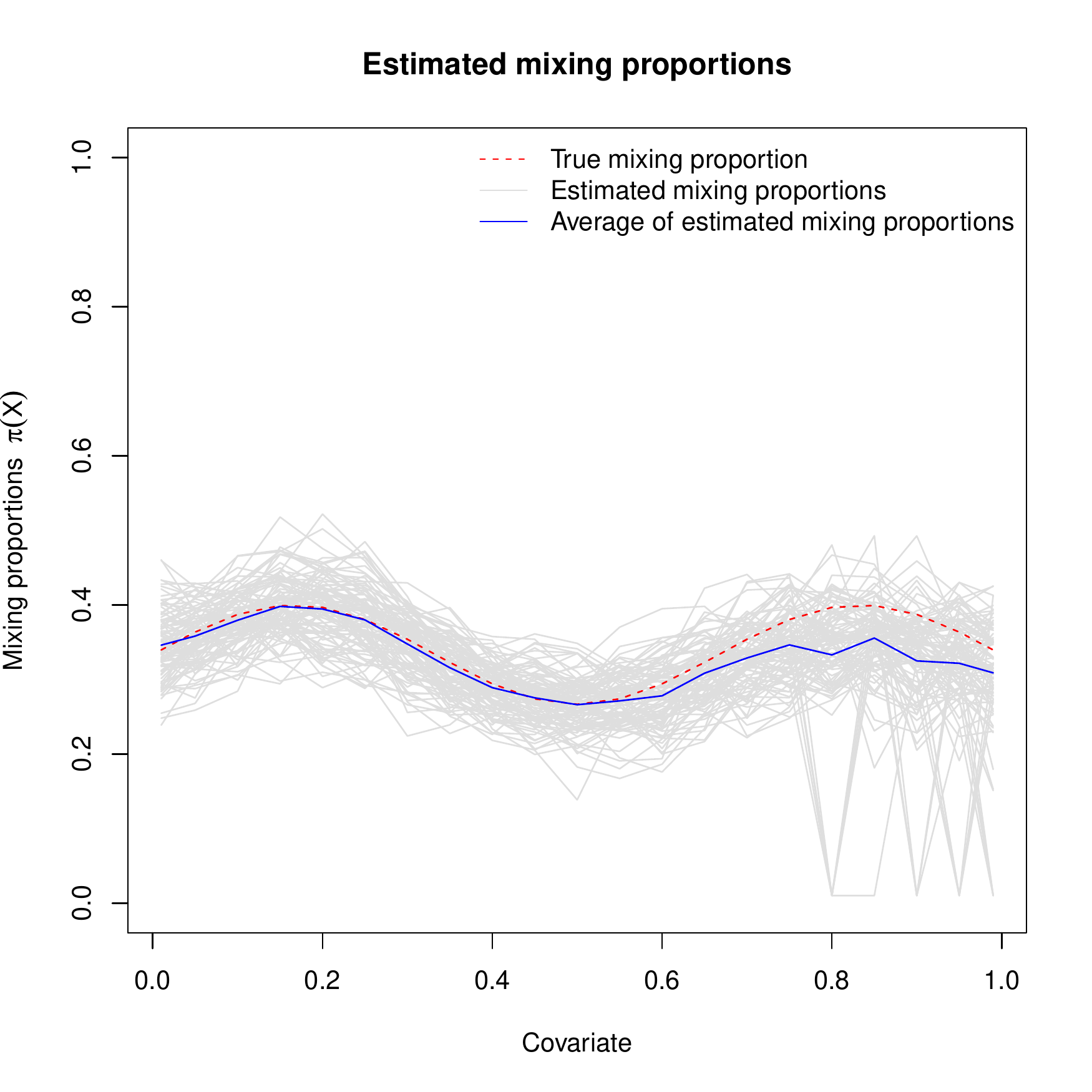}
\caption{Gaussian distribution}
\end{subfigure}
\begin{subfigure}[b]{0.3\textwidth}
\includegraphics[width = \textwidth]{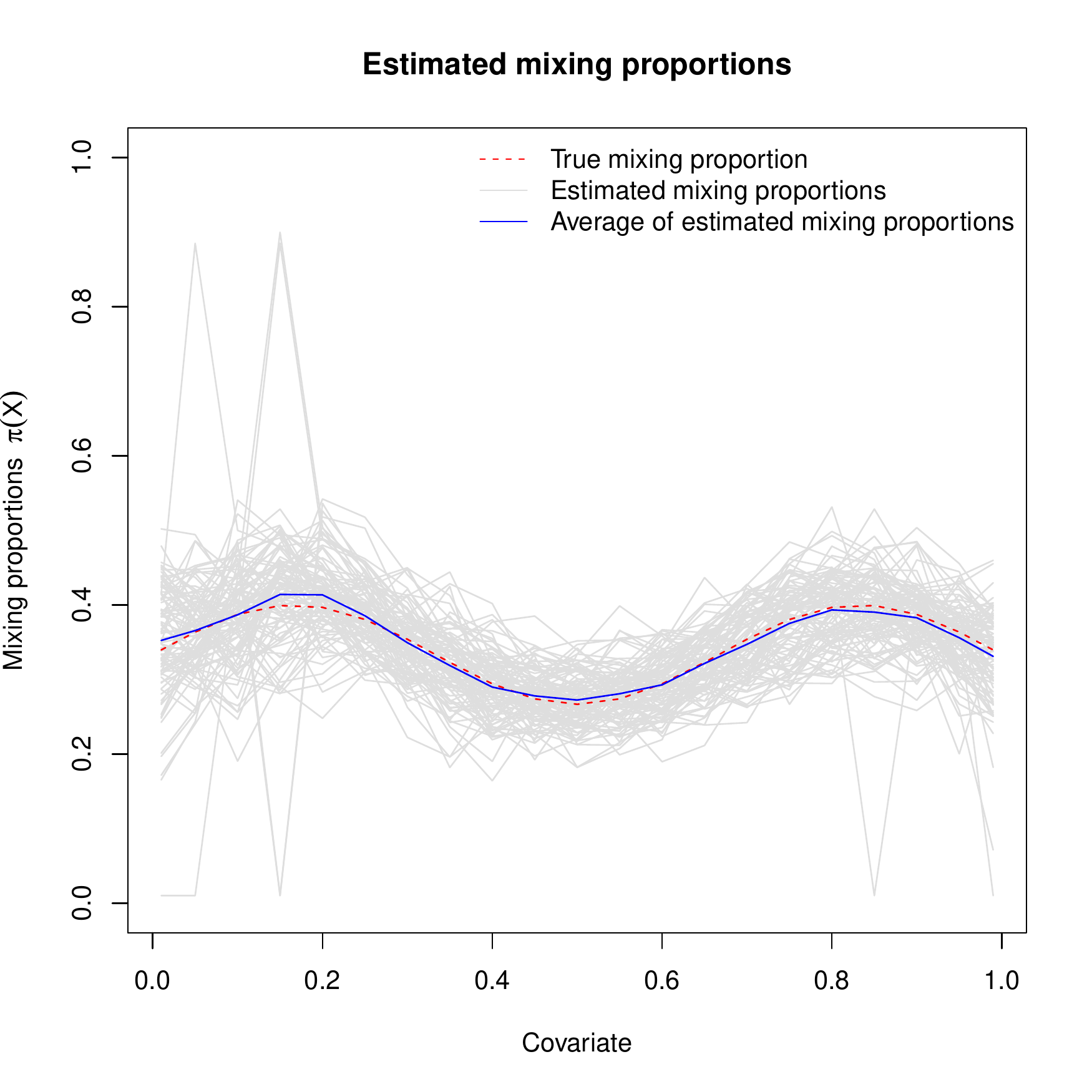}
\caption{Student distribution}
\end{subfigure}
\begin{subfigure}[b]{0.3\textwidth}
\includegraphics[width = \textwidth]{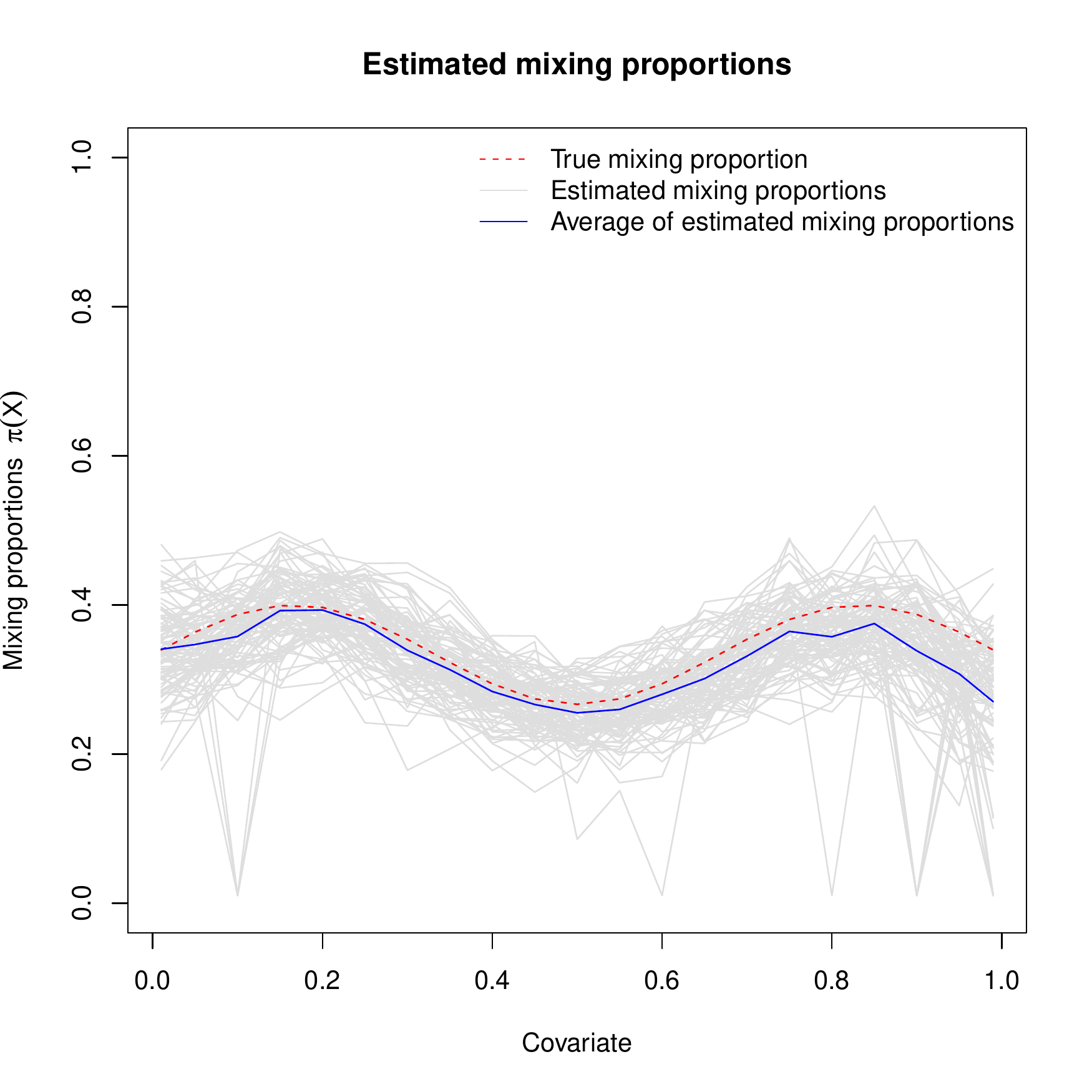}
\caption{Laplace distribution}
\end{subfigure}
\caption{Mixing proportions curves estimated with NMR-SE
}
\end{figure}

\section{Application in radiotherapy}

In this section, we implement the proposed methodology to a dataset obtained from applying radiation therapy to a canine patient with locally advanced Sinonasal Neoplasia. These data were provided by Bowen et al. (2012, Fig. 4) who used them to quantify the associations between pre-radiotherapy and post-radiotherapy PET parameters via spatially resolved mixture of linear regressions.  Intensity Modulated Radiotherapy is an advanced radiotherapy method that uses computer controlled device to deliver radiation of varying intensities to tumor or smaller areas within the tumor. There is evidence showing that the tumor is not homogeneous in its response to the radiation, and that some regions are more resistant than others. Functional imaging techniques (such as Positron Emission Tomography) can be used to identify the radiotherapy resistant regions within the tumor. For instance, an uptake in PET imaging of follow-up 2-deoxy-2-$[^{18}\mbox{F}]$fluoro-D-glucose (FDG)  is empirically linked to a local recurrence of the disease. Bowen et al. (2012), use this approach to construct a prescription function that maps the image intensity values into a local radiation dose that will maximize the probability of a desired clinical outcome. In their manuscript they validate the use of molecular imaging based prescription function against clinical outcome by establishing an association between imaging biomarkers (PET imaging pre-radiotherapy) and regional imaging response to known dosage of therapy (PET imaging post-radiotherapy). The regional imaging response captures the change in imaging signal over an individual image volume element (called a voxel). In our model of interest (\ref{modelrv1}), the pre-radiotherapy PET imaging intensities correspond to the input ${\bf X}_i$'s, and the post-radiotherapy PET imaging levels are the outputs $Y_i$'s.
 For many patients, the empirical link between post-treatment PET of FDG (regional imaging response) and pre-treatment PET of FDG (imaging biomarker at baseline) is well captured by a mixture regression model with two components. For a set of voxels with similar pre-treatment PET intensities, the nature of the response to the radiotherapy leads to two groups of voxels. The first group corresponds to voxels that respond well to the radiotherapy, and the second group contains the non-responding voxels. In our model of interest (\ref{modelrv1}), the non-responding voxel group corresponds to the case where $W({\bf X}_i) = 1$. The location parameters of each group appears to change as the pre-radiotherapy imaging intensity ${\bf X}_i$ varies. These changes in location are captured in our model by the location functions $a(\cdot)$ or $b(\cdot)$, where $a(\cdot)$, respectively $b(\cdot)$, is the component mean function for the completely responding (CR), respectively non-responding (NR), voxel.
 Additionally, the proportion of voxels $\pi({\bf X}_i)$ that respond well to treatment depends on the pre-treatment level of the PET, so the mixture  model should also account for a mixing proportion that depends on the input ${\bf X}_i$.  For a given input ${\bf x}$, we assume that the intensity level of the completely responding and the non-responding voxel have approximately the same p.d.f.  $f_{\bf x}$ up to a shift parameter, with the topographical scaling structure  (\ref{topovar}) presented  in the Introduction.  The variance of the distribution also changes with the level of the covariate (pre-treatment PET FDG). In many cases the variance increases as the intensity of a voxel's PET pre-radiotherapy increases, this is simply due to the fact the responding voxels will have a low post-treatment PET intensity, while the non-responding voxels will not. The aforementioned topographical scaling property, will allow to model this behavior. 
To obtain initial values for the location curves $a(\cdot)$ and $b(\cdot)$, we first use the \textbf{$\bf \texttt{R}$} package \texttt{flexmix}, see  Gruen et. al (2013), which allows us to fit defined parametric functions to the mixture. For the mixing proportion function we set a fixed constant value $\bar{\pi}({\bf x}) = 0.4$. The bandwidths are computed according to the methodology described in Section 4.1, except that the groups are now determined as an output of the \texttt{flexmix} package.  To stress the fact  that the  identification of the topographical model (\ref{modelrv1}) his highly hazardous  in the neighborhood of  the design value 2.5 due to a component crossing (local  non-identifiability),   we plot in dashed line the behavior of our method over the interval $[2,3]$ and will rule out  this domain  from  the following discussion.

In Fig.  6(a), we show the  PET  imaging response to radiotherapy at 3 months, measured by FDG PET uptake, versus the pre-treatment FDG PET uptake and the fitted location functions of the two groups of voxels. For this canine patient, the fitted location curve $a({\bf x})$ of the non-responding voxels increase with the pre-treatment FDG PET uptake, showing a positive relationship between the imaging response and the pre-treatment FDG PET. The location function $b({\bf x})$ corresponding to the completely responding voxels, shows little variation across the range of values of pre-treatment FDG PET and remains relatively flat. This findings are in line with the results obtained by Bowen et al. (2012), however our model is able to capture more than the linear variation in the location curves. Our model also yields the mixing proportions function $\pi({\bf x})$ that can be used to determine the optimal local radiation dose. 
As illustrated in Fig.  6(b),   for this patient voxels
tend to be completely-responding when the pre-treatment FDG PET uptake is between 6.5 and
7.5 SUVs (Standardized Uptake Values), the proportion of non-responding voxels at that
level decreases to 0.25. This suggests that the current radiation dose could be appropriate for voxels that have pre-treatment FDG PET uptake close to the range aforementioned.
In figure 7, we show the estimator $\hat  f_{\bf x}$ of  $f_{\bf x}$, defined in (\ref{local_dens}),   for  different  values of pre-treatment FDG PET uptake ${\bf x}$. 
We see that these conditional distributions are about zero-symmetric  with reasonably small trimming effect due to ${\mathbb I}_{f_n(y|{\bf x}_0
)\geq 0}$ in (\ref{local_dens}) (tiny wave effect on both sides of the main mode). This is a good model validation tool  since  we are actually  able to recover, after  local Fourier inversion,  the basic  symmetry assumption technically made  on the distributions of the errors; see for quality comparison other  existing (nonconditional)  semiparametric inversion density estimates   performed on real datasets: Fig. 1-2 (a) in Bordes et al. (2006), Fig. 3 in Butucea and Vandekerkhove (2013), Fig. 5 in Vandekerkhove (2013), or Fig. 2-3 in Bordes et al. (2013).

\begin{figure}[h!]
\centering
\begin{subfigure}[b]{0.8\textwidth}%\label{cancer}
\includegraphics[width = \textwidth]{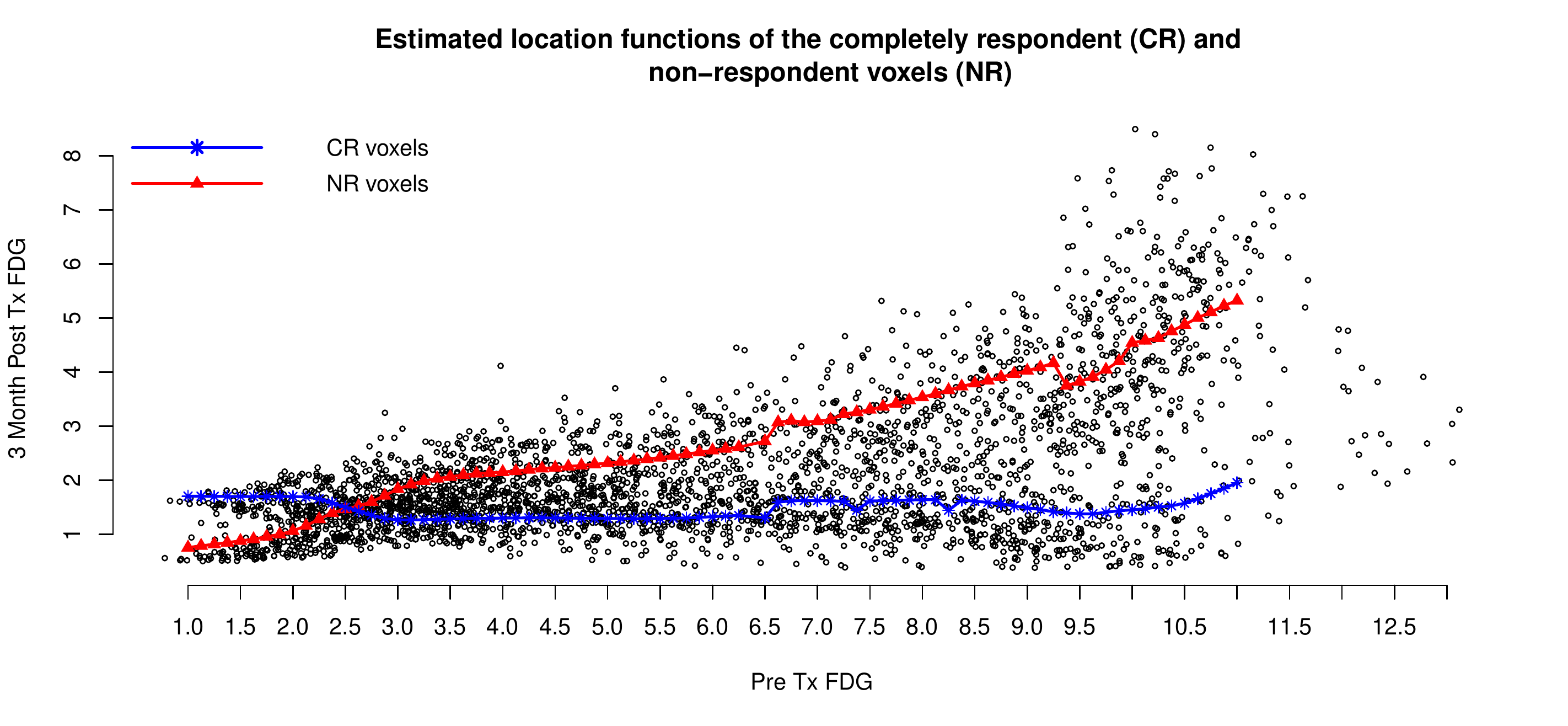}
\caption{\small {Scatter of plots of pre-treatment FDG PET vs. post-treatment FDG PET and estimated location functions for the completely respondent and non-respondent voxel subpopulations}}
\end{subfigure}
\begin{subfigure}[b]{0.8\textwidth}
\includegraphics[width = \textwidth]{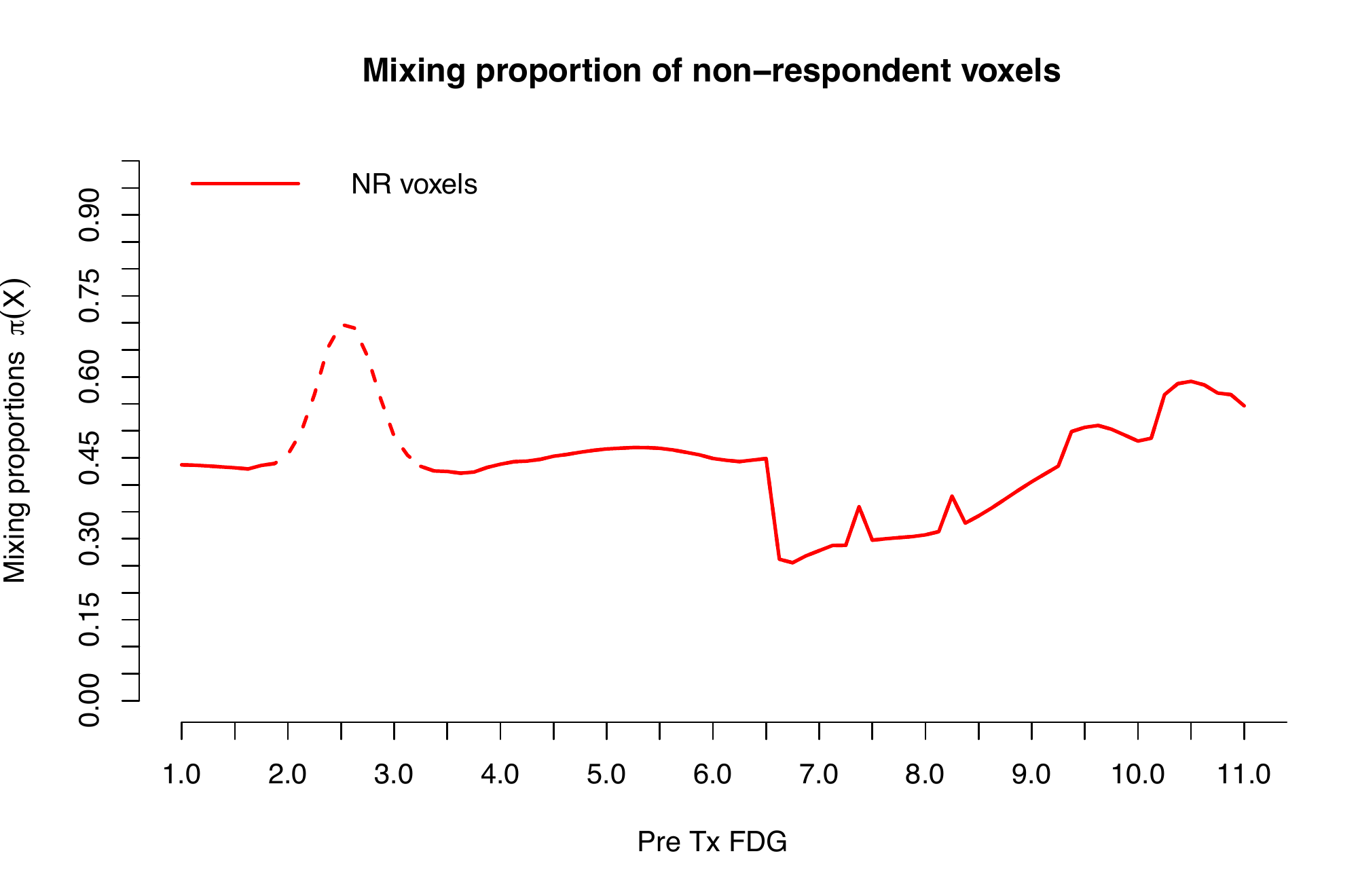}
\caption{Estimated mixing proportions for the completely (CR) and non-respondent (NR) voxel subpopulation}
\end{subfigure}
\caption{}
\end{figure}

\begin{figure}[h!]
\centering
\begin{subfigure}[b]{0.9\textwidth}
\includegraphics[width = \textwidth]{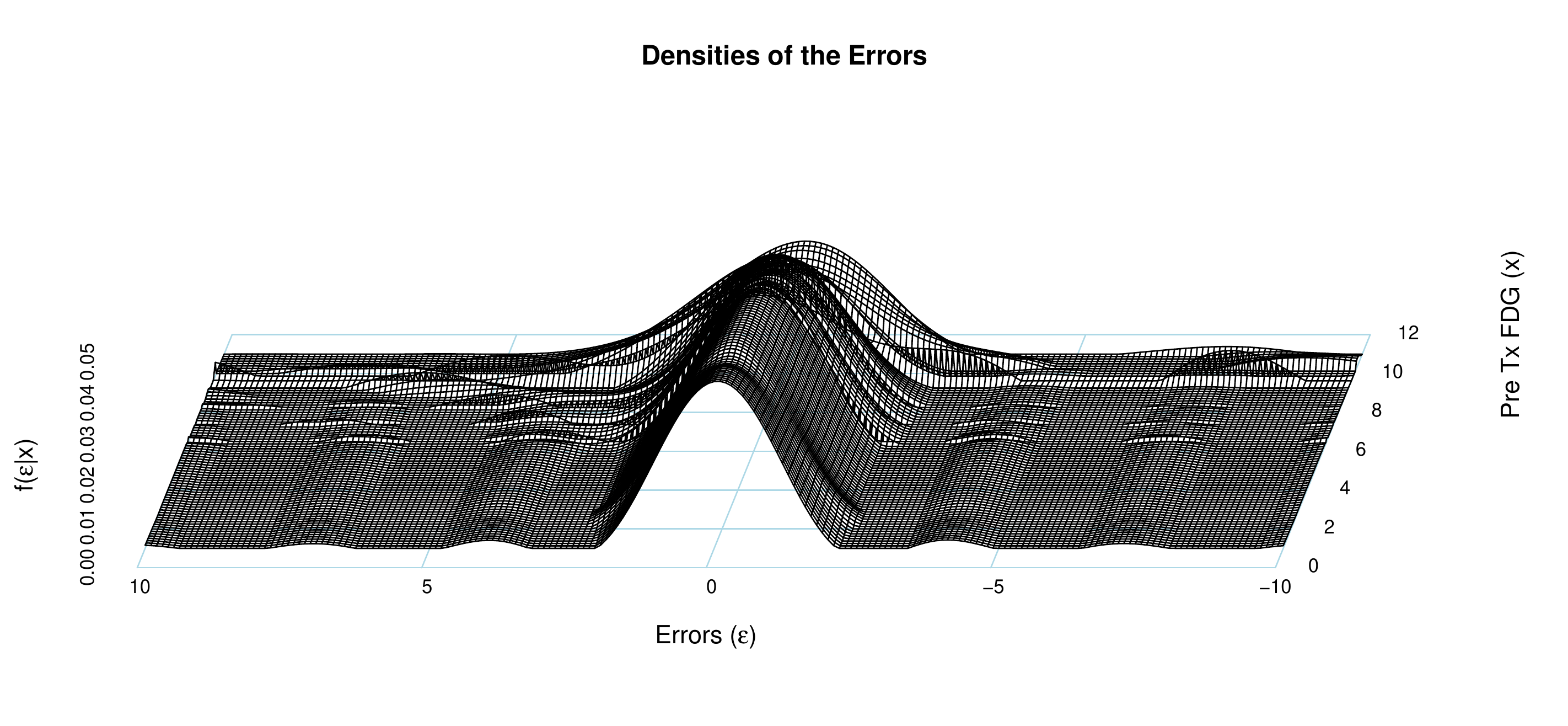}
%\caption{Frontal view}
\end{subfigure}
\caption{Density Estimates of the errors for the different levels of PET Tx FDG values}
\end{figure}

\section{Auxiliary results and main proofs}

Let us denote by $\|\cdot\|$ the Euclidean norm of a vector and by $\|\cdot\|_2$
the Frobenius norm of any squared matrix.
Recall the definition of $Z_k$ in (\ref{eq:Zk}) and let $J(t,u,h):= E[Z_1(t,u,h)]$.
Let $\dot Z_k$ and $\dot J$ denote respectively the gradient of $Z_k$ and $J$ with respect to their first argument $t$.

\begin{lemma}\label{lemma:ZkJ}
Under assumption {\bf A1}  we have:
\begin{enumerate}[i)]
\item  For all $(u, h)\in \R\times \R_+^*$ and any $k=1,...,n$,
$$
\sup_{t\in \Theta} |Z_k(t,u,h)| \leq \frac 2{1-2P} \frac{\|K\|_\infty}{h^d}, \quad
\sup_{t\in \Theta} |J(t,u,h)| \leq \frac 2{1-2P} \|\ell\|_\infty  \cdot\int |K|.$$

\item   For all $(u, h)\in \R\times \R_+^*$ and any $k=1,...,n$,
 \begin{equation*}
\sup_{t\in \Theta} \| \dot Z_k(t,u,h)\| \leq \frac {4(1+|u|)}{(1-2P)^2} \frac{\|K\|_\infty}{h^d}, \quad
\sup_{t\in \Theta} \|\dot J(t,u,h)\| \leq \frac {4(1+|u|)}{(1-2P)^2} \|\ell\|_\infty \cdot  \int |K|.
\end{equation*}

\item  
 For all $(u, h)\in \R\times \R_+^*$ and any $k=1,...,n$,
\begin{eqnarray*}
\sup_{t\in \Theta} \|\ddot Z_k(t,u,h)\|_2 &\leq& \frac {C(1+|u|+u^2)}{(1-2P)^3}\frac{\|K\|_\infty}{h^d},\\
\sup_{t\in \Theta} \|\ddot J_k(t,u,h)\|_2 &\leq& \frac {C(1+|u|+u^2)}{(1-2P)^3}\|\ell\|_\infty \cdot \int |K|,
\end{eqnarray*}
for some   constant $C>0$.

\end{enumerate}
\end{lemma}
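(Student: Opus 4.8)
\emph{Proof proposal.} The whole lemma follows by combining three elementary facts with the quotient rule, so the plan is mostly bookkeeping of constants; the one genuinely useful step is a conditioning argument that lets the $J$-type bounds be expressed through $\|\ell\|_\infty\int|K|$ rather than the cruder $\|K\|_\infty h^{-d}$.

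\textbf{Step 1 (building blocks).} Recall from the remark after (\ref{M}) that $|M(t,u)|\in[1-2P,1]$ for all $(t,u)\in\Theta\times\R$, hence $|M(t,u)|^{-1}\le(1-2P)^{-1}$ uniformly. Together with $|e^{\pm iuY_k}|=1$, $|K_h({\bf X}_k-{\bf x}_0)|\le\|K\|_\infty h^{-d}$, and $\int|K_h({\bf x}-{\bf x}_0)|\,d{\bf x}=\int|K|$ (substitution ${\bf x}\mapsto({\bf x}-{\bf x}_0)/h$), the first bound in i) is just the triangle inequality applied to the two terms defining $Z_k$ in (\ref{eq:Zk}).

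\textbf{Step 2 (the $J$-type bounds).} Conditioning on ${\bf X}_1={\bf x}$ and using $E[e^{iuY_1}\mid{\bf X}_1={\bf x}]=g^*_{\bf x}(u)$ (the conditional characteristic function, i.e. the identity behind (\ref{TFg})), one writes
$$
J(t,u,h)=\int\left(\frac{g^*_{\bf x}(u)}{M(t,u)}-\frac{g^*_{\bf x}(-u)}{M(t,-u)}\right)K_h({\bf x}-{\bf x}_0)\,\ell({\bf x})\,d{\bf x}.
$$
Since $|g^*_{\bf x}(\pm u)|\le1$ and $\|\ell\|_\infty<\infty$ by assumption {\bf A1}, the triangle inequality and Step~1 give the second bound in i). The derivative bounds for $J$ in ii) and iii) come from differentiating this representation in $t$ under the integral sign; this is legitimate because for fixed $(u,h)$ the $t$-gradient and $t$-Hessian of the integrand are bounded, uniformly in ${\bf x}$, by a constant (depending on $u$) times the integrable weight $|K_h({\bf x}-{\bf x}_0)|\ell({\bf x})$, so dominated convergence applies.

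\textbf{Step 3 (derivatives of $1/M$).} Writing $t=(\pi,a,b)$ and $M(t,u)=\pi e^{iua}+(1-\pi)e^{iub}$, compute $\partial_\pi M=e^{iua}-e^{iub}$, $\partial_aM=iu\pi e^{iua}$, $\partial_bM=iu(1-\pi)e^{iub}$, so $|\partial_\pi M|\le2$ and $|\partial_aM|^2+|\partial_bM|^2=u^2(\pi^2+(1-\pi)^2)\le u^2$, whence $\|\dot M(t,u)\|\le\sqrt{4+u^2}\le2(1+|u|)$; for second derivatives, $\partial^2_\pi M=\partial_a\partial_bM=0$, $|\partial_\pi\partial_aM|,|\partial_\pi\partial_bM|\le|u|$ and $|\partial^2_aM|,|\partial^2_bM|\le u^2$, so $\|\ddot M(t,u)\|_2\le C(|u|+u^2)$. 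The quotient rule gives, entrywise, $\dot{}\,(1/M)=-\dot M/M^2$ and $\ddot{}\,(1/M)=-\ddot M/M^2+2\dot M\dot M^{\top}/M^3$, hence
$$
\left\|\frac{\dot M(t,u)}{M(t,u)^2}\right\|\le\frac{2(1+|u|)}{(1-2P)^2},\qquad
\left\|\frac{\ddot M(t,u)}{M(t,u)^2}-\frac{2\dot M(t,u)\dot M(t,u)^{\top}}{M(t,u)^3}\right\|_2\le\frac{C(1+|u|+u^2)}{(1-2P)^3},
$$
using $1-2P\le1$ to merge the powers. Inserting these into the $t$-derivatives of (\ref{eq:Zk}) (two terms, at $u$ and $-u$, with $\|\dot M(t,-u)\|$ obeying the same bound) and using Step~1 for the $Z_k$ statements, resp. Step~2 for the $J$ statements, yields ii) and iii) with the stated constants (the numerical factor $4$ in ii) comes out exactly, and $C$ in iii) is the resulting absolute constant).

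\textbf{Main obstacle.} There is no deep difficulty here; the only two points requiring a little care are the conditioning identity in Step~2, which is what produces the $\|\ell\|_\infty\int|K|$ form of the bounds on $J,\dot J,\ddot J$ rather than a factor $h^{-d}$, and the justification of differentiation under the integral sign, which is immediate once one notes that the bounds from Steps~1 and 3 themselves furnish the required integrable (resp. constant) dominating functions.
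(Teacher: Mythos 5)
Your proof is correct and follows essentially the same route as the paper's: the uniform bound $|M(t,u)|\ge 1-2P$, the conditional-expectation representation of $J$ through $g^*_{\bf x}(u)/M(t,u)$ integrated against $K_h\,\ell$, and the explicit quotient-rule computation of $\dot M$, $\ddot M$ and hence of $\dot Z_k$, $\ddot Z_k$. The only (harmless) addition is your explicit justification of differentiation under the integral sign, which the paper leaves implicit.
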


\begin{proof}[Proof of Lemma~\ref{lemma:ZkJ}]
i) It is easy to see, from $1-2P\leq |M(t,u)|\leq 1$, that 

 $$|Z_k(t,u,h)| \leq \frac{2}{|M(t,u)|} K_h({\bf X}_k-{\bf x}_0
) \leq \frac{2}{(1-2P)}\frac{\|K\|_\infty}{h^d},$$ and that
$$
|J(t,u)| \leq 2 \left| \int \Im\left( \frac{g_{\bf x}^{\ast }(u)}{M(t,u)} \right) K_h({\bf x}-{\bf x}_0
)\ell({\bf x})d{\bf x})\right|\leq \frac {2 }{(1-2P)}\|\ell\|_\infty. \int |K| .
$$

ii)  We note that
\begin{eqnarray*}
\dot Z_k(t,u,h)
&=& -\left\{\frac{e^{iuY_k}}{M^2(t,u)} \left(\begin{array}{c}
e^{iu\alpha }-e^{iu\beta}\\ iupe^{iu\alpha}\\ iu(1-p) e^{iu \beta}
\end{array} \right)\right.
\\
&&\left.\quad+\frac{e^{-iuY_k}}{M^2(t,-u)} \left( \begin{array}{c}
e^{-iu\alpha }-e^{-iu\beta}\\ -iupe^{-iu\alpha}\\ -iu(1-p) e^{-iu \beta}
\end{array}\right)\right\} K_h({\bf X}_k-{\bf x}_0
),
\end{eqnarray*}
and that
\begin{eqnarray*}
E[\dot Z_k(t,u,h)] = \dot J_k(t,u,h)
&=& -\int \left\{\frac{g_{{\bf x}_0}
(u)}{M^2(t,u)} \left(\begin{array}{c}
e^{iu\alpha }-e^{iu\beta}\\ iupe^{iu\alpha}\\ iu(1-p) e^{iu \beta}
\end{array} \right)\right.\\
&&\left.\quad+\frac{g_{{\bf x}_0}
(-u)}{M^2(t,-u)} \left( \begin{array}{c}
e^{-iu\alpha }-e^{-iu\beta}\\ -iupe^{-iu\alpha}\\ -iu(1-p) e^{-iu \beta}
\end{array}\right)\right\} K_h({\bf x}-{\bf x}_0
)\ell({\bf x})d{\bf x}.
\end{eqnarray*}
%In particular,
%$$
%\dot J(\theta_0,u)
%= -\frac{f^*(u)}{M(\theta_0,u)} \left(\begin{array}{c}
%e^{iu\alpha_0 }-e^{iu\beta_0}\\ iup_0 e^{iu\alpha_0}\\ iu(1-p_0) e^{iu \beta_0}
%\end{array} \right)
%+\frac{f^*(-u)}{M(\theta_0,-u)} \left( \begin{array}{c}
%e^{-iu\alpha_0 }-e^{-iu\beta_0}\\ -iup_0 e^{-iu\alpha_0}\\ -iu(1-p_0) e^{-iu \beta_0}
%\end{array}\right).
%$$
We  thus have 

\begin{eqnarray}\label{dot_Z}
\| \dot Z_k(t,u,h)\| &= & \left\| \frac{e^{iuY_k}}{M^2(t,u)} \dot M(t,u)+ \frac{e^{-iuY_k} }{M^2(t,-u)} \dot M(t,-u)\right\|K_h({\bf X}_k-{\bf x}_0
)\nonumber\\
& \leq & \frac 1{(1-2P)^2} \left(2 \left( 2^2+ p^2u^2 +(1-p)^2u^2 \right)\right)^{1/2}K_h({\bf X}_k-{\bf x}_0
)\nonumber\\
&\leq  &\frac{4(1+|u|)}{(1-2P)^2}\frac{\|K\|_\infty}{h^d}\nonumber,
\end{eqnarray}
and
\begin{eqnarray}\label{dot_J}
\| \dot J_k(t,u,h)\| &= &\int  \left\| \frac{g_{\bf x}^{\ast }(u)}{M^2(t,u)} \dot M(t,u)
+ \frac{g_{\bf x}^{\ast }(-u)}{M^2(t,-u)} \dot M(t,-u)\right \|K_h({\bf X}_k-{\bf x}_0
)\ell({\bf x}) d{\bf x}\nonumber\\
& \leq & \frac 1{(1-2P)^2} \left(2 \left( 2^2+ p^2u^2 +(1-p)^2u^2 \right)\right)^{1/2}\int | K_h({\bf X}_k-{\bf x}_0
)\ell({\bf x})|d{\bf x} \nonumber\\
&\leq  &\frac{4(1+|u|)}{(1-2P)^2}\| \ell \|_\infty.   \int | K|\nonumber.
\end{eqnarray}

iii)  Formula of $\ddot M(t,u)$ being  tedious, we shortly write  that
\begin{eqnarray*}
\ddot Z_k(t, u,h) & = &\left\{ -\frac{e^{iu Y_k}}{M^2(t,u)} \ddot M(t,u)
+\frac{e^{-iu Y_k}}{M^2(t,-u)} \ddot M(t,-u)\right.\\
&& \left.+ 2\frac{e^{iu Y_k}}{M^3(t,u)}  \dot M(t,u)  \dot M(t, u)^\top
-2\frac{e^{-iu Y_k}}{\dot M^3(t,-u)}  \dot M(t,-u) \dot M(t,- u)^\top\right\}K_h({\bf X}_k-{\bf x}_0
),
\end{eqnarray*}
and  deduce our bound from the  above expression using  arguments similar to i) and ii).
\end{proof}

\begin{lemma}\label{lipschitz}
\begin{enumerate}[i)]
\item For all $(t,t')\in \Theta^2$, there exists a constant $C_1>0$  such that
\begin{eqnarray*}\label{c1}
|S_n(t)-S_n(t')|\leq C_1\|t-t'\| \sum_{j\ne k, j,k=1}^n \frac{K_h({\bf X}_k-{\bf x}_0
) K_h({\bf X}_j-{\bf x}_0
)}{n(n-1)}.
\end{eqnarray*}
\item For all $(t,t')\in \Theta^2$,   there exists a constant $C_2>0$  such that
\begin{eqnarray*}\label{c1}
\|\ddot S_n(t)-\ddot S_n(t')\|_2\leq C_2\|t-t'\|  \sum_{j\ne k, j,k=1}^n 
\frac{K_h({\bf X}_k-{\bf x}_0
) K_h({\bf X}_j-{\bf x}_0
)}{n(n-1)}.
\end{eqnarray*}
\item There exists some  constants $C_1, \, C_2>0$ depending on $\Theta,\, \alpha, \, M, K $ such that 
$$
E\left[ \left( \sum_{j\ne k, j,k=1}^n \frac{K_h({\bf X}_k-{\bf x}_0
) K_h({\bf X}_j-{\bf x}_0
)}{n(n-1)}
- \ell^2({\bf x}_0
) \right)^2\right] \leq C_1 h ^{2\alpha} + \frac{C_2}{n h^d},
$$
as $h\to 0$ and $nh^d \to \infty$.
\end{enumerate}
\end{lemma}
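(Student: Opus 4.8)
\noindent{\it Proof outline.} For parts i) and ii) the plan is to factor the kernel weights out of every estimate: write $Z_k(t,u,h)=\tilde Z_k(t,u)\,K_h({\bf X}_k-{\bf x}_0)$, where $\tilde Z_k(t,u):=e^{iuY_k}/M(t,u)-e^{-iuY_k}/M(t,-u)$. Since $|M(t,u)|\ge 1-2P$ whenever the first coordinate of $t$ lies in $[p,P]$ — irrespective of whether $a=b$ — the function $t\mapsto\tilde Z_k(t,u)$ and all its $t$-derivatives up to order three are defined and bounded on a fixed convex set $\widehat\Theta$ containing $\Theta$ (take $\widehat\Theta=[p,P]$ times a bounding box of $\Delta$), the order-$j$ derivative being bounded there by $c_\Theta(1+|u|^{j})$ with $c_\Theta$ depending only on $\Theta$; the orders $0,1,2$ are precisely Lemma~\ref{lemma:ZkJ}, and the order-$3$ bound follows from one further differentiation of $M$. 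Applying the mean value inequality along segments in $\widehat\Theta$, the maps $\tilde Z_k$, $\dot{\tilde Z}_k$, $\ddot{\tilde Z}_k$ are Lipschitz in $t$ over $\Theta$, for each fixed $u$, with constants of order $c_\Theta(1+|u|)$, $c_\Theta(1+|u|^2)$ and $c_\Theta(1+|u|^3)$ respectively.

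Granted this, part i) follows by writing $Z_k(t)Z_j(t)-Z_k(t')Z_j(t')=\big[\tilde Z_k(t)(\tilde Z_j(t)-\tilde Z_j(t'))+(\tilde Z_k(t)-\tilde Z_k(t'))\tilde Z_j(t')\big]K_h({\bf X}_k-{\bf x}_0)K_h({\bf X}_j-{\bf x}_0)$, bounding the bracket by $c_\Theta(1+|u|)\|t-t'\|$ via $|\tilde Z_k|\le 2/(1-2P)$ and the Lipschitz estimate, integrating in $u$ against $w$ (here $\int(1+|u|)\,w(u)\,du<\infty$, a consequence of {\bf A4}), and finally summing over $j\ne k$ and dividing by $4n(n-1)$; the resulting constant is $\int(1+|u|)w(u)\,du$ up to a power of $(1-2P)^{-1}$. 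Part ii) is the same computation run on the Hessian of $Z_kZ_j$, which by Leibniz equals $\ddot Z_k\,Z_j+\dot Z_k\dot Z_j^{\top}+\dot Z_j\dot Z_k^{\top}+Z_k\,\ddot Z_j$; its increment between $t$ and $t'$ is a sum of eight terms, each a product of a bound of $\tilde Z$, $\dot{\tilde Z}$ or $\ddot{\tilde Z}$ with a Lipschitz increment of one of these three, hence each dominated by $c_\Theta(1+|u|^3)\|t-t'\|$ times the two kernel factors, and $\int(1+|u|^3)\,w(u)\,du<\infty$ again by {\bf A4}.

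For part iii), put $g_k:=K_h({\bf X}_k-{\bf x}_0)$ and observe that $T_n:=(n(n-1))^{-1}\sum_{j\ne k}g_kg_j$ factorises as $\big((\sum_kg_k)^2-\sum_kg_k^2\big)/(n(n-1))$, so by independence $\E[T_n]=\mu_h^2$ with $\mu_h:=\E[g_1]=\int K({\bf v})\ell({\bf x}_0+h{\bf v})\,d{\bf v}$. The plan is to split $\E[(T_n-\ell^2({\bf x}_0))^2]\le 2\E[(T_n-\mu_h^2)^2]+2(\mu_h^2-\ell^2({\bf x}_0))^2$. The bias term uses {\bf A1} and {\bf A3}: with the standard normalisation $\int K=1$ one has $|\mu_h-\ell({\bf x}_0)|\le Mh^{\alpha}\int\|{\bf v}\|^{\alpha}|K({\bf v})|\,d{\bf v}$, hence $(\mu_h^2-\ell^2({\bf x}_0))^2\le C_1h^{2\alpha}$ since $\|\ell\|_\infty<\infty$. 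For the variance term, write $\sum_kg_k=n\mu_h+R$ with $\E[R]=0$, so that
$$
T_n-\mu_h^2=\frac{\mu_h^2}{n-1}+\frac{2\mu_hR}{n-1}+\frac{R^2}{n(n-1)}-\frac{1}{n(n-1)}\sum_{k=1}^n g_k^2,
$$
and bound the second moment of each of the four summands separately, using the kernel-moment bounds $\E[g_1^2]\le Ch^{-d}$ and $\E[g_1^4]\le Ch^{-3d}$ (these follow from $\int K^2<\infty$, $\int K^4<\infty$ and $\|\ell\|_\infty<\infty$). This gives $\E[R^2]\le Cnh^{-d}$, while $\E[R^4]$ and $\E[(\sum_k g_k^2)^2]$ are both $\le C(nh^{-3d}+n^2h^{-2d})$; dividing by the appropriate powers of $n$ and using $nh^d\to\infty$, each summand contributes $O((nh^d)^{-1})$, and altogether $\E[(T_n-\ell^2({\bf x}_0))^2]\le C_1h^{2\alpha}+C_2(nh^d)^{-1}$.

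The part that actually needs checking rather than bookkeeping is the variance estimate in iii): one has to confirm that the fourth-moment term $\E[R^4]$ and the diagonal correction $\E[(\sum_kg_k^2)^2]$ grow no faster than $h^{-3d}$ per factor, so that after division by $n^4$ they are dominated by $(nh^d)^{-1}$ — this is the only place where the hypothesis $K\in\L_4$ in {\bf A3} is genuinely used. Parts i) and ii) are routine once the kernel factors are separated off; the only care needed is keeping track of the polynomial growth in $|u|$ coming from differentiating $1/M$, which is harmless because $w$ has a finite fourth moment by {\bf A4}.
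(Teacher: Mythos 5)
Your argument is correct. For parts i) and ii) you follow essentially the paper's route: a mean-value/Taylor expansion in $t$ combined with the uniform bounds on $Z_k$, $\dot Z_k$, $\ddot Z_k$ from Lemma~\ref{lemma:ZkJ}, with the polynomial growth in $|u|$ absorbed by {\bf A4}. Your explicit remark that the segment $[t,t']$ may leave $\Theta$ (since $\Delta$ excludes the diagonal and need not be convex) but that $|M(t,u)|\geq 1-2P$ persists on the convex hull is a point the paper passes over silently, and it is worth making. Where you genuinely diverge is part iii): the paper disposes of it in one line by invoking Proposition~\ref{mse_contrast} (i.e.\ the same bias--variance/Hoeffding decomposition already carried out for $S_n(t)$, applied to the degenerate case where the $Z$'s are replaced by pure kernel factors), whereas you give a self-contained moment computation via $T_n=\bigl((\sum_k g_k)^2-\sum_k g_k^2\bigr)/(n(n-1))$, the exact identity $\E[T_n]=\mu_h^2$, a Lipschitz bias bound of order $h^{2\alpha}$, and fourth-moment bounds $\E[g_1^4]\leq Ch^{-3d}$ for the fluctuation terms. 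Both are valid; the paper's version recycles an argument and is shorter, yours is more transparent about exactly which hypotheses enter ($\int K^4<\infty$ only in iii), $nh^d\to\infty$ to kill the $(nh^d)^{-2}$ and $(nh^d)^{-3}$ terms). One small caveat: your part iii) uses the normalisation $\int K=1$, which {\bf A3} does not state explicitly but which is implicitly required for the statistic to concentrate around $\ell^2({\bf x}_0)$ rather than $\ell^2({\bf x}_0)(\int K)^2$, so you are assuming no more than the lemma itself does.
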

\begin{proof}
i) By a first order Taylor expansion we have  
\begin{eqnarray*}
 S_n(t) - S_n(t') 
%& = & -\frac 1{4n(n-1)}  \sum_{j \not= k, j,k=1}^n
%\int
%_{|u|\leq 1/h} 
%(\theta-\theta')^{\top} \nabla \left( Q_k(t,u) Q_j(t,u)
%\right)|_{t=t_u} dw(u)K_h({\bf X}_k-{\bf x}_0
) K_h({\bf X}_j-{\bf x}_0
)\\
= -\frac 1{2n(n-1)}  
\int
%_{|u|\leq 1/h} 
(t-t')^{\top}  \sum_{j \not= k, j,k=1}^n\dot Z_k(t_u,u,h)
Z_j(t_u,u,h)dw(u),
\end{eqnarray*}
where for all $u\in \R$, $t_u$ lies in the line segment with extremities $t$ and $t'$.
Therefore, according to calculations made in the proofs of Lemma \ref{lemma:ZkJ} i) and ii), we obtain
$$
|S_n(t) - S_n(t') |\leq \frac 4{(1-2P)^{3}} \|t-t' \|  \int_{\R} (1+|u|) w(u)du \left| \sum_{j \not= k, j,k=1}^n \frac{K_h({\bf X}_k-{\bf x}_0
) K_h({\bf X}_j-{\bf x}_0
)}{n(n-1)} \right|,
$$
which ends the proof of i) by using assumption {\bf A4}.\\

ii)  Let recall first that
$$
\ddot S_n(t) = \frac{-1}{2n(n-1)} \sum_{k\ne j} \int%_{|u|\leq 1/h}
\left[\ddot Z_k(t,u,h) Z_j(t,u,h) + \dot Z_k(t,u,h) \dot Z_j(t,u)^\top \right]
dw(u).
$$
We shall bound from above as follows
\begin{eqnarray*}
\| \ddot S_n(t,u)-\ddot S_n(t ',u)\|_2
& \leq & \frac 1{2n(n-1)} \sum_{k \ne j}
\left\{ \left \| \int %_{|u|\leq 1/h}
(\ddot Z_k(t,u,h) -\ddot Z_k(t',u,h)) Z_j(t,u) dw(u)\right \|_2 \right. \\
&& + \left\|\int
%_{|u|\leq 1/h} 
\ddot Z_k(t',u,h) (Z_j(t,u,h) - Z_j(t',u,h)) dw(u) \right\|_2 \\
&& + \left\| \int %_{|u|\leq 1/h} 
\dot Z_k(t,u,h) (\dot Z_j(t,u,h)- \dot Z_j(t',u,h))^\top dw(u)\right\|_2 \\
&& \left. + \left\| \int %_{|u|\leq 1/h}
(\dot Z_k(t,u,h) - \dot Z_k(t ',u,h)) \dot Z_j(t',u,h)^\top dw(u)\right\|_2\right\}.
\end{eqnarray*}
For each term in the previous sum, we use Taylor expansion and upper-bounds similar to those developed in the proof of Lemma \ref{lemma:ZkJ}, and  get
\begin{eqnarray*}
&&\left\| \ddot S_n(t,u)-\ddot S_n(t ',u)\right\|_2\\
&\leq & \left\|t - t' \right\| \frac{C \int (1+|u|+u^2+|u|^3) dw(u)}{(1-2P)^5}
\left| \sum_{j \not= k, j,k=1}^n \frac{K_h({\bf X}_k-{\bf x}_0
) K_h({\bf X}_j-{\bf x}_0
)}{n(n-1)} \right|,
\end{eqnarray*}
for some constant $C>0$, which finishes the proof by using assumption {\bf A4}.

iii) The proof is a consequence of Proposition~\ref{mse_contrast} hereafter.\end{proof}

%%%%%%%%%%%%%%%%%%%%%%%%%%%%%%%%%%%%%%

\begin{proof}[Proof of Proposition~\ref{mse_contrast}]
We shall bound from above the mean square error by the usual decomposition into squared bias plus variance.

Note that
\begin{eqnarray*}
E[S_n(t)] &=& -\frac 14 \int (E[Z_1 (t,u,h)])^2 w(u) du
\end{eqnarray*}
as $(Y_i, {\bf X}_i)$, $i=1,...,n$ are independent. Moreover,
\begin{eqnarray*}
E[Z_1 (t,u,h)]
&=& \int \int \left( \frac{e^{iuy}}{M(t,u)} - \frac{e^{-iuy}}{M(t,-u)}\right) K_h ({\bf x} - {\bf x}_0
) g(y,{\bf x}) dy d{\bf x}\\
&=& \int \left( \int \left( \frac{e^{iuy}}{M(t,u)} - \frac{e^{-iuy}}{M(t,-u)}\right) g_{\bf x}(y) dy \right) \ell({\bf x})  K_h ({\bf x} - {\bf x}_0
) d{\bf x}\\
&=& \int \left( \frac{g^*_{\bf x}(u)}{M(t,u)} - \frac{g^*_{\bf x}(-u)}{M(t,-u)}\right) \ell({\bf x})  K_h ({\bf x} - {\bf x}_0
) d{\bf x}.
\end{eqnarray*}
Let us denote by $ L({\bf x},t,u) := \frac{g^*_{\bf x}(u)}{M(t,u)} - \frac{g^*_{\bf x}(-u)}{M(t,-u)}$, which is further equal to
$$
L({\bf x},t,u) = 2 i \cdot \Im
 \left( \frac{g^*_{\bf x}(u)}{M(t,u)} \right)
= 2 i \cdot \Im
 \left( \frac{M(\theta({\bf x}),u)}{M(t,u)} \right) f^*_{\bf x}(u).
$$
We can write $E[Z_1 (t,u,h)] = [(L(\cdot,t,u)\ell)\star K_h]({\bf x}_0
)$, where $\star$ denotes the convolution product. The bias of $S_n(t)$ is bounded from above as follows:
\begin{eqnarray*}
|E[S_n(t)] - S(t)| &=& \frac 14 \left| \int \left(
[(L(\cdot,t,u)\ell) \star K_h]^2 ({\bf x}_0
)
- L^2({\bf x}_0
,t,u) \ell^2({\bf x}_0
)
\right)  w(u) du \right| \\
&\leq & \frac 14  \int \left|
[(L(\cdot,t,u)\ell) \star K_h ]({\bf x}_0
)
- L({\bf x}_0
,t,u) \ell({\bf x}_0
)\right|\\
&& \cdot \left|
[(L(\cdot,t,u)\ell) \star K_h] ({\bf x}_0
)
+ L({\bf x}_0
,t,u) \ell({\bf x}_0
)\right| w(u) du .
\end{eqnarray*}
Now
$$
|L({\bf x}_0
,t,u) \ell({\bf x}_0
)| \leq \frac{4 \|\ell\|_\infty}{1-2P}
\leq \frac{4 C}{1-2P},
$$
as $\|\ell\|_\infty$ is further bounded by a constant $C=C(\alpha,M)$ depending only on $\alpha , \, M>0$, uniformly over $\ell \in L(\alpha,M)$ (see remark following condition {\bf A1}).
We also have
\begin{eqnarray} \label{expbound}
E[Z_1 (t,u,h)] = |[(L(\cdot,t,u)\ell) \star K_h ]({\bf x}_0
)|
& \leq & \int |L({\bf x},t,u)| l({\bf x}) |K|_h({\bf x}- {\bf x}_0
) d{\bf x}\nonumber \\
& \leq & \frac{4C}{1-2P} \int |K|.
\end{eqnarray}
Moreover, for all $u\in \R$,
\begin{eqnarray*}
&&|[(L(\cdot,t,u)\ell) \star K_h ]({\bf x}_0)- L({\bf x}_0,t,u) \ell({\bf x}_0)| \\
&\leq & \int |L({\bf x}+{\bf x}_0, t, u) \ell({\bf x}+{\bf x}_0) - L({\bf x}_0,t,u) \ell({\bf x}_0)| \cdot |K|_h({\bf x}) d {\bf x}\\
&\leq & c (|u| + \varphi(u)) \int \|{\bf x}\|^\alpha \cdot |K|_h ({\bf x}) d{\bf x}\leq c \cdot h^\alpha (|u|+\varphi(u)) \int \|{\bf x}\|^\alpha \cdot |K| ({\bf x}) d{\bf x} ,
\end{eqnarray*}
under our assumptions {\bf A1-A4}.  Indeed, that implies that $L(\cdot,t,u) \ell(\cdot)$ is Lipschitz $\alpha$-smooth for all $(t,u)\in \Theta\times \R$, with some constant $c>0$, see Lemma~\ref{regLl}. Therefore we get
\begin{equation*}\label{bias}
|E[S_n(t)] - S(t)|
 \leq  \frac{4 C (1+ \int|K|)}{1-2P}\, c \left (\int \|{\bf x}\|^\alpha \cdot |K| ({\bf x} ) d{\bf x}\right ) \cdot \left (\int |u| w(u)du \right ) \cdot h^\alpha.
\end{equation*}
Similarly  to  $S_n(t)$  variance decomposition, we write
\begin{eqnarray*}
&& S_n(t) - E[S_n(t)] \\
&=& \frac{-1}{4n(n-1)} \sum_{j \ne k} \left(
\int (Z_j(t,u,h) Z_k(t,u,h) - E^2[Z_1(t,u,h)]) w(u) du
\right) \\
&=&  \frac{-1}{2n} \sum_j \int ( Z_j(t,u,h)- E[Z_1(t,u,h)]) E[Z_1(t,u,h)]w(u) du\\
&&+\frac{-1}{4n(n-1)} \sum_{j \ne k} \left(
\int (Z_j(t,u,h) - E[Z_1(t,u,h)])(Z_k(t,u,h)- E[Z_1(t,u,h)] ) w(u) du
\right) \\
&=& T_1+T_2, \text{ say}.
\end{eqnarray*}
Terms in $T_1$ and $T_2$ are uncorrelated and thus $Var(S_n(t)) = Var (T_1) + Var(T_2)$.

\noindent On the one hand,
\begin{eqnarray*}
Var(T_1) &= & \frac 1{4 n} Var\left (\int ( Z_1(t,u,h)- E[Z_1(t,u,h)]) E[Z_1(t,u,h)]w(u) du\right )\\
&= & \frac 1{4 n} E \left[ \left(\int ( Z_1(t,u,h)- E[Z_1(t,u,h)]) E[Z_1(t,u,h)]w(u) du\right )^2 \right] \\
&\leq & \frac 1{4 n} E \left[ \int ( Z_1(t,u,h)- E[Z_1(t,u,h)]) ^2 w(u) du  \right] 
\int E^2 [Z_1(t,u,h)]w(u) du,
\end{eqnarray*}
according to  Cauchy-Schwarz inequality.
Now we use (\ref{expbound}) and obtain
\begin{eqnarray*}
Var(T_2) &\leq & \frac 1{4 n} \left( \frac{4C \int |K|}{1-2P}\right)^2
\int E[Z_1(t,u,h)^2] w(u) du.
\end{eqnarray*}
%{\color{blue}{Justement je ne vois pas comment tu peux utiliser  (\ref{expbound}) puisque grosso modo on a:
%\begin{eqnarray*}
%&&E \left[ \int ( Z_1(t,u,h)- E[Z_1(t,u,h)]) ^2 w(u) du  \right] \\
%&=&\int  E \left[ \left(Z_1(t,u,h)- E[Z_1(t,u,h)]\right)^2 \right] w(u) du \\
%&=&\int \int \left( \left( \frac{e^{iuy}}{M(t,u)} - \frac{e^{-iuy}}{M(t,-u)}\right) K_h ({\bf x} - {\bf x}_0
%)\right.\\
%&&\left. - \int \left( \frac{g^*_{\bf v}(u)}{M(t,u)} - \frac{g^*_{\bf v}(-u)}{M(t,-u)}\right) \ell({\bf v})  K_h ({\bf v} - {\bf x}_0
%) d{\bf v}\right)^2g(y,{\bf x})dyd{\bf x} w(u)du
%\end{eqnarray*}
%}
%ce qui n'est pas vraiment exploitable dans une approche du type traitement du biais. Du coup on pense naturellement \`a  la solution suivante : 
%\begin{eqnarray*}
%&&E \left[ \int ( Z_1(t,u,h)- E[Z_1(t,u,h)]) ^2 w(u) du  \right] \\
%&=&\int  E \left[ \left(Z_1(t,u,h)- E[Z_1(t,u,h)]\right)^2 \right] w(u) du \\
%&=&\int  E \left[ \left(Z_1(t,u,h)\right)^2 \right] w(u) du\\
%&=&\int  \int  \left(  \frac{e^{iuy}}{M(t,u)} - \frac{e^{-iuy}}{M(t,-u)}  \right)^2\left(  K_h ({\bf x} - {\bf x}_0
%)  \right)^2   g(y,{\bf x})dyd{\bf x}   w(u) du\\
%&\leq&\frac{1}{(1-2P)^2} \int \left(  K_h ({\bf x} - {\bf x}_0
%)  \right)^2   \ell({\bf x})d{\bf x}  \\
%&=& \frac{1}{(1-2P)^2} \int \frac{1}{h^d} \left(  K({\bf z})  \right)^2   \ell({\bf z}h-{\bf x}_0
%)d{\bf  z}
%\end{eqnarray*}
%Ce qui nous sort du $1/h^d$ \`a tous les coups...Dis moi si je me plante ???}
We have,
\begin{eqnarray*}
E[Z_1(t,u,h)^2] & = & E \left[ E\left[\left(2 i\cdot \Im
 \left(\frac{e^{iuY}}{M(t,u)}\right)\right)^2 \middle |  {\bf X}\right] (K_h({\bf X} - {\bf x}_0
))^2
\right] \\
&=& -4 E\left[ \left (\Im
\left( \frac{g^*_{\bf X}(u)}{M(t,u)}\right)\right)^2 (K_h({\bf X} - {\bf x}_0
))^2\right]\\
&\leq & \frac 4{(1-2P)^2} \int \frac 1{h^{2d}} K^2 \left( \frac{{\bf x}-{\bf x}_0
}h \right) \ell({\bf x}) d{\bf x}\\
& \leq & \frac {4 C \int K^2}{(1-2P)^2 h^d}.
\end{eqnarray*}
Therefore,
\begin{eqnarray}\label{varT1}
Var(T_1) &\leq & \frac{16 C^3 (\int |K|)^2 \int K^2}{(1-2P)^4 n h^d} ,
\end{eqnarray}
for all $t \in \Theta$, $h>0$.\\

\noindent On the other  hand,
\begin{eqnarray*}
Var(T_2) &=& \frac{1}{16n(n-1)}E\left[ \left(
\int (Z_1(t,u,h) - E[Z_1(t,u,h)])(Z_2(t,u,h)- E[Z_1(t,u,h)] ) w(u) du\right)^2 \right]\nonumber\\
&\leq& \frac{1}{16n(n-1)}E\left[ 
\int (Z_1(t,u,h) - E[Z_1(t,u,h)])^2(Z_2(t,u,h)- E[Z_1(t,u,h)] )^2 w(u) du\right]\nonumber\\
&\leq& \frac{1}{16n(n-1)}
\int E^2[Z_1(t,u,h)^2] w(u) du\nonumber\\
&\leq&  \frac{1}{16n(n-1)} \left (\frac {4 C \int K^2}{(1-2P)^2 h^d}\right)^2\nonumber\\
&=& \frac{C^2(\int K^2)^2}{n(n-1)(1-2P)^4 h^{2d}},
\end{eqnarray*}
which is clearly a $o((nh^d)^{-1})$ and concludes the proof.
\end{proof}

%%%%%%%%%%%%%%%%%%%%%%%%%%%%%%%%%%%%%%%%%%%%%%%%%%%%%%%%%%%%%%%%

\begin{lemma}{\bf (Smoothness of $L({\bf x},t,u)\ell({\bf x})$)}\label{regLl}
Assume {\bf A1}-{\bf A4}. There exists a constant $C>0$, such that for all $({\bf x},{\bf x}')\in \R^{d}\times \R^d$ and all $(t,u)\in \Theta\times \R$:
$$
|L({\bf x},t,u)\ell({\bf x})-L({\bf x}',t,u)\ell({\bf x}')|\leq C(|u|+\varphi(u))\|{\bf x}-{\bf x}'\|^{\alpha}.
$$ 
\end{lemma}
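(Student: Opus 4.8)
\noindent\emph{Proof proposal.} The plan is to reduce the claim to a modulus-of-continuity estimate for $M(\cdot,u)$ in its first argument. Recall from the proof of Proposition~\ref{mse_contrast} the identity
$$
L({\bf x},t,u)\,\ell({\bf x})\;=\;2i\,\Im\!\left(\frac{M(\theta({\bf x}),u)}{M(t,u)}\,f^*_{\bf x}(u)\right)\ell({\bf x}).
$$
Since $|\Im(z)|\le|z|$ and $|M(t,u)|\ge 1-2P$ for all $t\in\Theta$, it suffices to bound $|\Phi({\bf x},u)-\Phi({\bf x}',u)|$ by $c\,(|u|+\varphi(u))\,\|{\bf x}-{\bf x}'\|^\alpha$, where $\Phi({\bf y},u):=M(\theta({\bf y}),u)\,f^*_{\bf y}(u)\,\ell({\bf y})$.

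I would then insert intermediary terms and write $\Phi({\bf x},u)-\Phi({\bf x}',u)$ as the sum of $[M(\theta({\bf x}),u)-M(\theta({\bf x}'),u)]\,f^*_{\bf x}(u)\,\ell({\bf x})$, of $M(\theta({\bf x}'),u)\,[f^*_{\bf x}(u)-f^*_{{\bf x}'}(u)]\,\ell({\bf x})$, and of $M(\theta({\bf x}'),u)\,f^*_{{\bf x}'}(u)\,[\ell({\bf x})-\ell({\bf x}')]$. The second and third terms are immediate: using $|M(\theta({\bf y}),u)|\le\pi({\bf y})+(1-\pi({\bf y}))=1$, $|f^*_{\bf y}(u)|\le\|f_{\bf y}\|_{\L_1}=1$, the uniform bound $\|\ell\|_\infty\le C(\alpha,M)$ recalled after {\bf A1}, the Lipschitz-$\alpha$ estimate for $f^*$ in {\bf A2}, and the Lipschitz-$\alpha$ property of $\ell$ in {\bf A1}, they are bounded respectively by $C(\alpha,M)\,\varphi(u)\,\|{\bf x}-{\bf x}'\|^\alpha$ and $M\,\|{\bf x}-{\bf x}'\|^\alpha$.

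The only computational step is the first term. For $t=(\pi,a,b)$ and $t'=(\pi',a',b')$ I would rewrite $M(t,u)-M(t',u)=\pi(e^{iua}-e^{iua'})+(1-\pi)(e^{iub}-e^{iub'})+(\pi-\pi')(e^{iua'}-e^{iub'})$ and use $|e^{ius}-e^{ius'}|\le|u|\,|s-s'|$ together with $|e^{i\vartheta}|=1$ to get $|M(t,u)-M(t',u)|\le|u|(|a-a'|+|b-b'|)+2|\pi-\pi'|$; applying {\bf A1} to the three coordinate functions of $\theta(\cdot)$ then gives $|M(\theta({\bf x}),u)-M(\theta({\bf x}'),u)|\le 2M(1+|u|)\|{\bf x}-{\bf x}'\|^\alpha$, so the first term is at most $2M\,C(\alpha,M)(1+|u|)\|{\bf x}-{\bf x}'\|^\alpha$. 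Collecting the three contributions together with the factor $2/(1-2P)$ yields $|L({\bf x},t,u)\ell({\bf x})-L({\bf x}',t,u)\ell({\bf x}')|\le C'(1+|u|+\varphi(u))\|{\bf x}-{\bf x}'\|^\alpha$; since {\bf A2} remains valid with $\varphi$ replaced by $1+\varphi$ (still $w$-integrable as $w$ is a p.d.f.), we may assume $\varphi\ge1$, whence $1+|u|+\varphi(u)\le 2(|u|+\varphi(u))$ and the stated bound follows, with $C$ depending only on $P$ (i.e.\ on $\Theta$), $\alpha$ and $M$. The main --- and essentially only --- obstacle is the bookkeeping in this last step: keeping the $u$-dependence linear and absorbing the proportion difference, which carries no factor of $u$, into the $\varphi$-term.
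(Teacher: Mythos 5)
Your proof is correct and follows essentially the same route as the paper's: both reduce the claim to Lipschitz-$\alpha$ estimates for the individual factors ($\theta(\cdot)$, $f^*_{\cdot}(u)$, $\ell$) and track the $|u|$-dependence through $|e^{ius}-e^{ius'}|\le |u|\,|s-s'|$, which in the paper appears as $|\sin[u(a({\bf x})-a)]-\sin[u(a({\bf x}')-a)]|\le |u|\,|a({\bf x})-a({\bf x}')|$ applied to the four terms ${\mathcal T}_i$ of the expanded product $\Im\bigl(M(\theta({\bf x}),u)\overline{M(t,u)}\bigr)$. Your factoring of the constant $1/M(t,u)$ out of the imaginary part by linearity, with the resulting three-term telescoping of $M(\theta(\cdot),u)f^*_{\cdot}(u)\ell(\cdot)$, is a slightly cleaner bookkeeping of the same estimates, and your closing remark on absorbing the $u$-free contribution of $|\pi({\bf x})-\pi({\bf x}')|$ into $|u|+\varphi(u)$ handles a point the paper leaves implicit.
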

\begin{proof}
For $t=(\pi,a,b)\in \Theta$, and $({\bf x},u)\in \R^d\times \R$ we write
\begin{eqnarray*}
L({\bf x},t,u)\ell({\bf x})=f_{{\bf x}_0}
(u)\ell({\bf x}){{\mathcal T}}({\bf x},t,u), ~\mbox{and}~
{\mathcal T}({\bf x},t,u):=\frac{\sum_{i=1}^4{ \mathcal T}_i({\bf x},t,u)}{1-2\pi(1-\pi)\cos[u(a-b)]}
\end{eqnarray*}
where 
\begin{eqnarray*}
{ \mathcal T}_1({\bf x},t,u)&=&\pi({\bf x})\pi\sin[u(a({\bf x})-a)],\quad { \mathcal T}_2({\bf x},t,u)=\pi({\bf x})(1-\pi)\sin[u(a({\bf x})-b)], \\
{ \mathcal T}_3({\bf x},t,u)&=&(1-\pi({\bf x}))\pi\sin[u(b({\bf x})-a)], \quad { \mathcal T}_4({\bf x},t,u)=(1-\pi({\bf x})(1-\pi)\sin[u(b({\bf x})-b)].
\end{eqnarray*}
For all $({\bf x},{\bf x}')\in \R^{d}\times \R^d$ we have
\begin{eqnarray*}
&&|L({\bf x},t,u)\ell({\bf x})-L({\bf x}',t,u)\ell({\bf x}')|\\
&&\leq |f_{{\bf x}_0}
(u)\ell({\bf x})||{\mathcal T}({\bf x},t,u)-{\mathcal T}({\bf x}',t,u)|+|{\mathcal T}({\bf x},t,u)||f_{{\bf x}_0}
(u)\ell({\bf x})-f_{{\bf x}'}^*(u)\ell({\bf x}')|\\
&&\leq \|\ell\|_\infty |{\mathcal T}({\bf x},t,u)-{\mathcal T}({\bf x}',t,u)|+(1-2P)^{-1}|f_{{\bf x}}^*(u)\ell({\bf x})-f_{{\bf x}'}^*(u)\ell({\bf x}')|.
\end{eqnarray*}
Let us now show the $\alpha$-smooth Lipschitz property of  ${\mathcal T}_1$, the proof  for the  other  ${\mathcal T}_i$'s being completely similar. For all  $({\bf x},{\bf x}')\in \R^{d}\times \R^d$
\begin{eqnarray*}
|{\mathcal T}_1({\bf x},t,u)-{\mathcal T}_1({\bf x}',t,u)|&\leq& |\sin[u(a({\bf x})-a)]-\sin[u(a({\bf x}')-a)]|+|\pi({\bf x})-\pi({\bf x}')|\\
&\leq&|u||(a({\bf x})-a({\bf x}')]|+|\pi({\bf x})-\pi({\bf x}')|\\
&\leq&M |u|\|{\bf x}-{\bf x}'\|^\alpha+M\|{\bf x}-{\bf x}'\|^\alpha.
\end{eqnarray*}
On the other hand we have
\begin{eqnarray*}
|f^*(u|{\bf x})\ell({\bf x})-f^*(u|{\bf x}')\ell({\bf x}')|&\leq& |\ell({\bf x})-\ell({\bf x}')|+\|\ell\|_\infty |f_{{\bf x}_0}
(u)-f_{{\bf x}'}^*(u)|,\\
&\leq&(M+\|\ell\|_\infty \varphi(u))\|{\bf x}-{\bf x}'\|^\alpha,
%&\leq&M\|{\bf x}-{\bf x}'\|^\alpha+\frac{1}{\sqrt{2\pi}}\int|f(e|{\bf x})-f(e|{\bf x}')|de.
\end{eqnarray*}
which concludes the proof.
\end{proof}

%%%%%%%%%%%%%%%%%%%%%%%%%%%%%%%%%%%%%%%%%%%%%%%%%%%%%%%%%

\begin{proof}[Proof of Theorem~\ref{cons}]
Our method is based on a
consistency proof for mininum contrast estimators by
Dacunha-Castelle and Duflo (1993,  pp.94--96). Let us consider a
countable dense set $D$ in $\Theta$, then $ \inf_{t\in \Theta}S_{n}(t)=\inf_{t\in D}S_{n}(t) $, is a
measurable random variable. We define in addition the random
variable
$$
W(n,\xi)=\sup\left\{|S_{n}(t)-S_{n}(t')|;~(t,t')\in D^2,~ \|t-t' \|\leq \xi\right\},
$$
and recall that $S(\theta_0
)=0$. Let us consider a non-empty  open
ball $B_*$ centered on $\theta_0
$ such that $S$ is bounded from
below by a positive real number $2\varepsilon$ on $\Theta\backslash
B_*$. Let us  consider  a sequence  $(\xi_p)_{p\geq 1}$ decreasing to zero,
and take $p$ such that there exists a covering of $\Theta\backslash
B_*$ by a finite number $\kappa$ of balls $(B_i)_{1\leq i\leq \kappa}$
with  centers $ t_i\in \Theta$, $i=1,\dots,\kappa$,   and  radius
less than $\xi_p$. Then, for all $t \in B_i$, we have
\begin{eqnarray*}
S_{n}(t)&\geq& S_{n}(t_i)-|S_{n}(t)-S_{n}(t_i)|\geq S_{n}(t_i)-\sup_{t \in B_i}|S_{n}(t)-S_{n}(t_i)|,
\end{eqnarray*}
which leads to
\begin{eqnarray*}
\inf_{t \in \Theta\setminus B_*}S_{n}(t)\geq
\inf_{1\leq i\leq \kappa}S_{n}(t_i) -W(n,\xi_p).
\end{eqnarray*}
As a  consequence we have the following events inclusions
\begin{eqnarray*}\label{limsup1}
\left\{\hat \theta_n\notin B_*\right\}
& \subseteq &\left\{\inf_{t \in \Theta\setminus B_*}
S_n(t) < \inf_{t \in  B_*}
S_n(t)  < S_{n}(\theta_0
)\right\} \nonumber \\
&\subseteq& \left\{\inf_{1\leq i\leq \kappa} S_{n}(t_i) - W(n,\xi_p)
 < S_{n}(\theta_0
)
\right\} \nonumber \\
& \subseteq &
\left\{W(n,\xi_p)>\varepsilon\right\}\cup  \left\{\inf_{1\leq i\leq \kappa} (S_{n}(t_i)-S_{n}(\theta_0
))\leq \varepsilon\right\}.
\end{eqnarray*}
In addition we have
\begin{eqnarray*}\label{limsup2}
&&P\left( \inf_{1\leq i\leq \kappa} (S_{n}(t_i)-S_{n}(\theta_0
))\leq \varepsilon\right)\\
%&&\leq 1-P\left( \inf_{1\leq i\leq \ell} (S_n(\theta_i)-S_n(\theta_0))> \varepsilon\right)\\
%&&\leq 1-\prod_{i=1}^\ell P(S_n(\theta_i)-S_n(\theta_0))> \varepsilon))\\
%&&\leq 1-\prod_{i=1}^\kappa (1-P(S_{n,h}(\theta_i)-S(\theta_0)\leq  \varepsilon)))\\
%&&\leq 1-\prod_{i=1}^\ell (1-P(S_n(\theta_i)-S(\theta_i)+S(\theta_i)-S(\theta_0)+S_n(\theta_0)-S(\theta_0)\leq  \varepsilon))\\
%&&\leq 1-\prod_{i=1}^\ell (1-P(S_n(\theta_i)-S(\theta_i)+S_n(\theta_0)-S(\theta_0)\leq  \varepsilon-(S(\theta_i)-S(\theta_0)))\\
%&&\leq 1-\prod_{i=1}^\kappa (1-P(S_{n,h}(\theta_i)-S(\theta_i)+S_{n,h}(\theta_0)-S(\theta_0)\leq  -\varepsilon)))\\
%&&\leq 1-\prod_{i=1}^\ell (1-P(|S_n(\theta_i)-S(\theta_i)|+|S_n(\theta_0)-S(\theta_0)|\geq  \varepsilon)))\\
&& \leq 1-\prod_{i=1}^\kappa (1-[P(|S_{n}(t_i)-S(t_i)|\geq  \varepsilon )+P(|S_{n}(\theta_0
)-S(\theta_0
)|\geq  \varepsilon)]),
\end{eqnarray*}
where, according to Proposition \ref{mse_contrast},  the last two terms in the right hand side 
of the above inequality vanish to zero  if 
$h^{d} n\rightarrow \infty$ and $h\rightarrow 0$ as $n\rightarrow \infty$.
%In addition
%\begin{eqnarray*}
%P(W(n,\xi_{p})>\varepsilon)&\leq& P\left( \frac{C \xi_p}{n(n-1)} \sum_{j \not= k, j,k=1}^nK_h({\bf X}_k-{\bf x}_0
%)K_h({\bf X}_j-{\bf x}_0
%)>\varepsilon\right)\\
%&\leq& P\left( C \xi_p\ell^2({\bf x}_0
%)>\varepsilon\right)\\
%&&+P\left( C \xi_p\left |\sum_{j \not= k, j,k=1}^n\frac{K_h({\bf X}_k-{\bf x}_0
%)K_h({\bf X}_j-{\bf x}_0
%)}{n(n-1)}-\ell^2(x_*)\right |>\varepsilon\right)\\
%&\leq& P\left( C \xi_p\ell^2({\bf x}_0
%)>\varepsilon\right)\\
%&&+\left(\frac{C\xi_p}{\varepsilon}\right)^2E\left(\left [\sum_{j \not= k, j,k=1}^n\frac{K_h({\bf X}_k-{\bf x}_0
%)K_h({\bf X}_j-{\bf x}_0
%)}{n(n-1)}-\ell^2(x_*)\right ]^2 \right)
%\end{eqnarray*}
%where  the last term in the right hand side of the above inequality is a $(1+o(1))((h^{2d} n)^{-1}+h^2)$. This last remark is a direct consequence of the treatment of terms $T_2$ and $T_3$ when replacing $L({\bf x},\theta,u) \ell({\bf x})$ by $\ell({\bf x})$ in the proof of Proposition \ref{convS_n}.
To conclude we use Lemma~\ref{lipschitz} and notice that,   for all $(t,t')\in \Theta^2$,  we have
\begin{eqnarray}\label{c1}
&&|S_n(t)-S_n(t')|\nonumber \\
&&\leq \frac{ C\|t-t'\| }{n(n-1)} \left| \sum_{j\ne k, j,k=1}^n K_h({\bf X}_k-{\bf x}_0
) K_h({\bf X}_j-{\bf x}_0
) \right|\nonumber \\
&&\leq C\|t-t'\|\ell^2({\bf x}_0
)+ C\|t-t'\| \left |\sum_{j\ne k, j,k=1}^n \frac{K_h({\bf X}_k-{\bf x}_0
) K_h({\bf X}_j-{\bf x}_0
)}{n(n-1)}-\ell^2({\bf x}_0
)\right|.
\end{eqnarray}
We deduce from above that
\begin{eqnarray*}
P(W(n,\xi_p)>\varepsilon)
&\leq & P \left ( C\xi_p \ell^2({\bf x}_0
) > \frac{\varepsilon}2\right ) \\
&& +\left(\frac{2 C\xi_p}{\varepsilon}\right)^2 E\left [ \left(\sum_{j\ne k, j,k=1}^n \frac{K_h({\bf X}_k-{\bf x}_0
) K_h({\bf X}_j-{\bf x}_0
)}{n(n-1)}-\ell^2({\bf x}_0
)\right)^2\right],
\end{eqnarray*}
where the last term in the right hand side  is of order  $(nh^{d})^{-1}+h^{2\alpha}$ and tends to 0 by our assumption on $h$.
Since for $p$ sufficiently large we have $C\xi_p \ell^2({\bf x}_0
)<\varepsilon/2$ and thus $P \left (C\xi_p \ell^2({\bf x}_0
) > \varepsilon /2 \right )=0$, this concludes the  proof of the consistency  in probability of $\hat \theta_n$ when $n h^{d}\rightarrow \infty$ and $h\rightarrow 0$ as $n\rightarrow \infty$.
\end{proof}

%%%%%%%%%%%%%%%%%%%%%%%%%%%%%%%%%%%%%%%%%%%%%%%%%%%%%%%%%%%

\begin{proof}[Proof of Theorem~\ref{asymptotic_normality}]
By a Taylor expansion of $\dot{S}_n$ around $\theta_0
$, we have
\begin{eqnarray*}
0 = \dot{S}_{n}(\hat \theta_{n}) = \dot{S}_{n}(\theta_0
) + \ddot S_{n}(\bar \theta_{n})(\hat \theta_{n}-\theta_0
),
\end{eqnarray*}
where $\bar \theta_{n}$ lies in the line segment with extremities $\hat \theta_{n}$ and $\theta_0
$.\\

Let us study the behaviour of
\begin{equation*}
\dot S_n(\theta_0
) = \frac{-1}{2n(n-1)} \sum_{j \ne k}
\int \dot Z_k(\theta_0
,u,h) Z_j(\theta_0
,u,h) w(u) du ,
\end{equation*}
where $\dot Z_k$ denotes the gradient of $Z_k$ with respect to the first argument. Recall that $\theta_0
 = \theta({\bf x}_0
)= (\pi({\bf x}_0
), a({\bf x}_0
), b({\bf x}_0
))$ and therefore
$$
J(t,u,h)=E[Z_1(t,u,h)] = 2i\int \Im
 \left( \frac{M(\theta({\bf x}_0
),u)}{M(t,u)}\right) f_{{\bf x}}^*(u) \ell({\bf x}) K_h ({\bf x} - {\bf x}_0
) d {\bf x} ,
$$
satisfies $J(\theta_0
,u,h)\rightarrow 0$ as $h\to 0$. Indeed, the last integral may be equal to 0 if the set $\{{\bf x}: \theta({\bf x}) = \theta({\bf x}_0
) \}$ has Lebesgue measure 0, or tends (by uniform continuity in ${\bf x}$ of the integrand) to
$$
2i \Im
 \left( \frac{M(\theta({\bf x}_0
),u)}{M(\theta({\bf x}_0
),u)}\right) f_{{\bf x}_0
}^*(u) \ell({\bf x}_0
) = 0.
$$
Moreover,
$$
\dot Z_k(t,u,h) = \Im
 \left( - \dot M (t,u) \frac{e^{iuY_k}}{M^2(t,u)} \right) K_h({\bf X}_k-{\bf x}_0
).
$$
Denote  $\dot J (t,u,h) = E[\dot Z_k(t,u,h)]$ and observe that
$$
\dot J (t,u,h) = \int \Im
 \left( - \dot M (t,u) \frac{M(\theta({\bf x}),u) f^*_{\bf x}(u)}{M^2(t,u)} \right) K_h({\bf x}-{\bf x}_0
) \ell({\bf x}) d{\bf x}.
$$
Then, we decompose $\dot S_n(\theta_0
)$ as follows
\begin{eqnarray}\label{AB}
&& \dot S_n(\theta_0
)\nonumber  \\
&=& \frac{-1}{2n(n-1)} \sum_{j \ne k}
\int \left( \dot Z_k(\theta_0
,u,h) - \dot J(\theta_0
,u,h) \right)
\left( Z_j(\theta_0
,u,h) - E[Z_j(\theta_0
,u,h)] \right) w(u) du\nonumber  \\
&& - \frac 1{2n} \sum_{j=1}^n \int \dot J(\theta_0
,u,h) (Z_j(\theta_0
,u,h) - E[Z_j(\theta_0
,u,h)]) w(u) du  \nonumber \\
&:=& -\frac 12 (A_n(h)+B_n(h)),
\end{eqnarray}
where terms in $A_n(h)$ and $B_n(h)$ are uncorrelated. On the one hand, we use a multivariate Central Limit Theorem for independent random variables taking values in a Hilbert space, following Kandelaki and Sozanov (1964) or Gikhman and Skorokhod (2004, Theorem 4, page 396).
This will give us the limit behavior of the term
$$
B_n(h)=\frac 1n \sum_{j=1}^n U_j(h), \quad U_j(h):=\int \dot J(\theta_0
,u,h) (Z_j(\theta_0
,u,h) - E[Z_j(\theta_0
,u,h)]) w(u) du.
$$
The random variables $U_j(h)$, $j=1,...,n$ are independent, centered, but their common law depend on $n$ via $h$. Our goal is  to show that
\begin{equation}\label{eq:limitvar}
n h^d Var(B_n(h)) = \sum_{j=1}^n Var\left( \sqrt{\frac{h^d}n}U_j(h) \right) \rightarrow \Sigma, \quad \text{as } n\to \infty
\end{equation}
and that
\begin{equation}\label{eq:Lyapounov}
\sum_{j=1}^n E\left[\left\| \sqrt{\frac{h^d}n} U_j(h)\right\|^4 \right]
= \frac{h^{2d}}n E[\|U_1(h)\|^4] \to 0,\quad \text{as } n\to \infty.
\end{equation}
Indeed, (\ref{eq:Lyapounov}) implies the Lindeberg's condition in  Kandelaki and Sozanov (1964):
$$
\sum_{j=1}^n E \left[\left\|\sqrt{\frac{h^d}n} U_j(h)\right |^2 \cdot \mathbb I_{\left\|\sqrt{h^d/n} U_j(h)\right\| \geq \varepsilon} \right]
\to 0,\quad \text{as } n\to \infty,\text{ for any } \varepsilon >0.
$$
On the other hand, we prove that
\begin{equation}\label{eq:negligeable}
\sqrt{n h^d} A_n(h) \rightarrow 0,   \text{ in probability, }\text{ as } n \to \infty, 
\end{equation}
stating that  $\sqrt{n h^d} A_n(h)$ is a  negligible term  and that, as a consequence,  the limiting  behavior of $\sqrt{nh^d} \dot S_n(\theta_0
)$ is  only driven by $\sqrt{nh^d} B_n(h)$. This will end the proof of the theorem.\\

Let us prove (\ref{eq:limitvar}) and (\ref{eq:Lyapounov}).
Note that $n h^d Var(B_n(h)) = h^d Var(U_1(h))$ and that
\begin{eqnarray*}
&&Var(U_1(h)) \\
&=& \int \int \dot J(\theta_0
,u_1,h) \dot J^\top(\theta_0
,u_2,h) Cov(Z_1(\theta_0
,u_1,h) ,Z_1(\theta_0
,u_2,h) ) w(u_1)w(u_2) du_1 du_2.\\
\end{eqnarray*}
Similarly to Proposition~\ref{mse_contrast}, by uniform continuity in ${\bf x}$ of the integrand in $\dot J$, we get
\begin{equation*}%\label{eq:Sigma1}
\lim_{h \to 0} \dot J (\theta_0
,u,h) = \dot J(\theta_0
,u).
\end{equation*}
See that $\|\dot J(\theta_0
,u)\|\leq 2(1+|u|) \|\ell\|_\infty/(1-2P)$ and that the latter upper bound is integrable with respect to the measure $w(u) du$ by assumption on $w$. It remains to study:
\begin{eqnarray*}
&&Cov(Z_1(\theta_0
,u_1,h) ,Z_1(\theta_0
,u_2,h) )\\
& = &  E\left[ Z_1(\theta_0
,u_1,h) Z_1(\theta_0
,u_2,h)\right]
- E\left[ Z_1(\theta_0
,u_1,h)\right] E\left[ Z_1(\theta_0
,u_2,h)\right].
\end{eqnarray*}
From (\ref{expbound}) we deduce that
$$
h^d |E\left[ Z_1(\theta_0
,u_1,h)\right] E\left[ Z_1(\theta_0
,u_2,h)\right]| \leq h^d \left (\frac{4C \int |K|}{1-2P}\right )^2 \to 0,
$$
when  $h\to 0$ as  $n\rightarrow \infty$.
We also have
\begin{eqnarray*}
&& h^d E\left[ Z_1(\theta_0
,u_1,h) Z_1(\theta_0
,u_2,h)\right] \\
&=& \int \int \left( \frac{e^{iu_1 y}}{M(\theta_0
,u_1)} - \frac{e^{-iu_1 y}}{M(\theta_0
,-u_1 )}\right)
\left( \frac{e^{iu_2 y}}{M(\theta_0
,u_2 )} - \frac{e^{-iu_2 y}}{M(\theta_0
,-u_2 )}\right)
\frac 1{h^d} K^2 (\frac{{\bf x} - {\bf x}_0
}h ) g(y,{\bf x}) dy d{\bf x}\\
&=& \int \left( \frac{e^{iu_1 y}}{M(\theta_0
,u_1)} - \frac{e^{-iu_1 y}}{M(\theta_0
,-u_1 )}\right)
\left( \frac{e^{iu_2 y}}{M(\theta_0
,u_2 )} - \frac{e^{-iu_2 y}}{M(\theta_0
,-u_2 )}\right)
g(y,{\bf x}_0
) dy (\int K^2 ) (1+o(1))\\
&=& \int \left( \frac{e^{iu_1 y}}{M(\theta_0
,u_1)} - \frac{e^{-iu_1 y}}{M(\theta_0
,-u_1 )}\right)
\left( \frac{e^{iu_2 y}}{M(\theta_0
,u_2 )} - \frac{e^{-iu_2 y}}{M(\theta_0
,-u_2 )}\right)
g_{{\bf x}_0
}(y) dy \cdot \ell({\bf x}_0
) (\int K^2 ) (1+o(1)),
\end{eqnarray*}
as $h\to 0$.
See also that we can write
\begin{eqnarray*}%\label{eq:Sigma2}
% \nonumber to remove numbering (before each equation)
  V(\theta_0
,u_1,u_2) &:=& \int \left( \frac{e^{iu_1 y}}{M(\theta_0
,u_1)} - \frac{e^{-iu_1 y}}{M(\theta_0
,-u_1 )}\right)
\left( \frac{e^{iu_2 y}}{M(\theta_0
,u_2 )} - \frac{e^{-iu_2 y}}{M(\theta_0
,-u_2 )}\right)
g_{{\bf x}_0
}(y) dy \\
  &=&  \frac{M(\theta_0
,u_1+u_2)}{M(\theta_0
,u_1) M(\theta_0
,u_2)} f_{{\bf x}_0
}(u_1+u_2)
  -\frac{M(\theta_0
,u_1-u_2)}{M(\theta_0
,u_1) M(\theta_0
,-u_2)} f_{{\bf x}_0
}(u_1-u_2)\nonumber \\
  && -\frac{M(\theta_0
,-u_1+u_2)}{M(\theta_0
,-u_1) M(\theta_0
,u_2)} f_{{\bf x}_0
}(-u_1+u_2)
  +\frac{M(\theta_0
,-u_1-u_2)}{M(\theta_0
,-u_1) M(\theta_0
,-u_2)} f_{{\bf x}_0
}(-u_1-u_2) \nonumber
\end{eqnarray*}
and this is a bounded function with respect to $u_1$ and $u_2$. Therefore
$$
h^d Var(U_1(h)) \rightarrow \int \int \dot J(\theta_0
,u_1) \dot J^\top(\theta_0
,u_2) V(\theta_0
, u_1, u_2) w(u_1) w(u_2) du_1 du_2 =:\Sigma,
$$
as $h\to 0$. This proves (\ref{eq:limitvar}).

Now, denote by $v^{(k)}$ the $k$-th coordinate of a vector $v$ and use Jensen inequality to see that
\begin{eqnarray*}
  E[\|U_1(h)\|^4] & \leq & 3 \left( E[(U_1^{(1)}(h))^4] +E[(U_1^{(2)}(h))^4]+E[(U_1^{(3)}(h))^4] \right)\\
   &\leq & 3\sum_{k=1}^3  E\left[\left( \int \dot J^{(k)} (\theta_0
,u,h)
   (Z_1(\theta_0
,u,h) - E[Z_1(\theta_0
,u,h)])w(u) du\right)^4 \right]\\
   &\leq &  3\sum_{k=1}^3 \int |\dot J^{(k)} (\theta_0
,u,h) |^4 E\left[ |Z_1(\theta_0
,u,h)|^4\right] w(u) du.
\end{eqnarray*}
We have $|\dot J^{(k)} (\theta_0
,u,h) | \leq 4(1+|u|) (\int |K|) \|\ell\|_\infty/(1-2P)^2$ by Lemma~\ref{lemma:ZkJ} and
\begin{eqnarray*}
   E\left[ |Z_1(\theta_0
,u,h)|^4\right]
   &=& \int \int 4 \left |\Im \left(\frac{e^{iuy}}{M(\theta_0
,u)}\right )\right |^4
   \frac 1{h^{4d}} K^4 \left( \frac{{\bf x} - {\bf x}_0
}h\right)
   g(y,{\bf x}) dy d{\bf x} \\
   &\leq & \frac{4}{h^{3d} (1-2P)^4} \int \frac 1{h^d} K^4 \left( \frac{{\bf x} - {\bf x}_0
}h\right)
   \ell({\bf x}) d{\bf x} \\
   &\leq & \frac{O(1)}{h^{3d}} \left(\int K^4\right)   \|\ell\|_\infty,
\end{eqnarray*}
as $h\to 0$. Therefore,
$$
\frac{h^{2d}}n E[\|U_1(h)\|^4] \leq \frac{O(1)}{n h^d} \int |K| \cdot \int K^4 \cdot \int (1+|u|)^4 w(u) du =o(1),
$$
as $n \to \infty$ and $h\to 0$ such that $nh^d \to \infty$. This proves (\ref{eq:Lyapounov}).

\noindent To prove (\ref{eq:negligeable}), we notice that  $A_n(h)$ defined in (\ref{AB}) can be treated similarly to  $T_1$ in (\ref{varT1}). By this remark, we easily prove that $Var(A_n)=o\left((nh^d)^{-1}\right)$  which insure the wanted result.

\bigskip
\noindent  Let us prove that
\begin{eqnarray*}
\ddot S_n(\theta_n){\longrightarrow} \mathcal{I}(\theta_0
),  \text{ in probability, }\text{ as } n \to \infty, 
\end{eqnarray*}
where $\mathcal{I}=\mathcal{I}(\theta_0
)=-\frac 12 \int \dot J (\theta_0
,u) \dot J ^\top (\theta_0
,u) w(u)du$, and  $\dot J (\theta_0
,u)$ is defined in  (\ref{eq:Sigma1}). We start by writing the triangular inequality
$$
\|\ddot S_n(\theta_n) - \mathcal{I}\|\leq \|\ddot S_n(\theta_n) - \ddot S_n(\theta_0
)\| +\|\ddot S_n(\theta_0
)-  E( \ddot S_n(\theta_0
))\|+ \| E( \ddot S_n(\theta_0
))- \mathcal{I}\|.
$$
Then using upper bounds similar to (\ref{c1}) slighly adapted to  $\ddot S_n$ instead of $S_n$ and  the convergence in probability of $\hat \theta_n$ towards  $\theta_0
$ established in Theorem \ref{cons}, we have  that $\|\ddot S_n(\theta_n) - \ddot S_n(\theta_0
)\|\rightarrow 0$ in probability as $n\rightarrow \infty$.
By writting 
\begin{eqnarray*}
E(\ddot S_n(\theta_0
))&=& -\frac 12 \int %_{|u|\leq 1/h} 
\left (\ddot J(\theta_0
,u,h) J(\theta_0
,u,h) + \dot J(\theta_0
,u,h)  \dot J(\theta_0
,u,h)^\top\right ) w(u)du
\end{eqnarray*}
and noticing, according to Bochner's Lemma,   that  $J(\theta_0
,u,h) \rightarrow 0 $  and $\dot J(\theta_0
,u,h)\rightarrow \dot J(\theta_0
,u) $ as $h\rightarrow 0$, we have, according to the Lebesgue's theorem,
that  $E[\ddot S_n(\theta_0
)]$ tends to $\mathcal{I}$ as $h\rightarrow 0$.
Finally we decompose $-2n(n-1)(\ddot S_n(\theta_0
)-E[\ddot S_n(\theta_0
)])=\sum_{l=1}^3(D_{1,l}+D_{2,l})$ where
\begin{eqnarray*}
D_{1,1}&=&\sum_{k\neq j} \int (\ddot Z_k(\theta_0
,u,h)-\ddot J(\theta_0
,u,h)) (Z_j(\theta,u,h)-J(\theta_0
,u,h))w(u) du  \\
D_{1,2}&=&(n-1) \sum_{k} \int (\ddot Z_k(\theta_0
,u,h)-\ddot J(\theta_0
,u,h)) J(\theta_0
,u,h) w(u) du\\
D_{1,3}&=&(n-1) \sum_{j} \int \ddot J(\theta_0
,u,h) (Z_j(\theta,u,h)-J(\theta_0
,u,h))w(u) du,
\end{eqnarray*}
and
\begin{eqnarray*}
D_{2,1}&=&\sum_{k\neq j} \int (\dot Z_k(\theta_0
,u,h)-\dot J(\theta_0
,u,h)) (\dot Z_j(\theta,u,h)-\dot J(\theta_0
,u,h))^\top w(u) du  \\
D_{2,2}&=&(n-1) \sum_{k} \int (\dot Z_k(\theta_0
,u,h)-\dot J(\theta_0
,u,h)) J(\theta_0
,u,h)^\top w(u) du\\
D_{2,3}&=&(n-1) \sum_{j} \int \dot J(\theta_0
,u,h) (Z_j(\theta,u,h)-J(\theta_0
,u,h))^\top w(u) du.
\end{eqnarray*}
Noticing that  terms $D_{i,3}$, $i=1,2$, respectively  $D_{i,j,}$, $i=1,2$ and $j=2,3$,  can be treated as $T_1$ respectively  $T_2$ in the proof of Proposition \ref{mse_contrast}, we obtain 
$$
Var\left(\ddot S_n(\theta_0
)\right)=O\left(\frac{1}{nh^d}\right) ,
$$
which concludes the proof.
\end{proof}

\noindent {\bf Aknowledgements.} The authors  thank  warmly Dr.'s  Bowen and  Chappell for providing  the Positron Emission Tomography dataset presented
in Bowen et al. (2012, Fig. 4), as well as   Dr. Wang  for sharing  the EM-type algorithm code developed in Huang et al. (2013).
%%%%%%%%%%%%%%%%%%%%%%%%%%%%%%%%%%%%%%%%%%%%%%%%%%%%%

\end{document}